\newcommand{\E}{\mathcal{E}}
\newcommand{\R}{\mathbb{R}}
\newcommand{\C}{\mathbb{C}}
\newcommand{\F}{\mathbb{F}}
\newcommand{\D}{{\mathcal D}}
\newcommand{\0}{{\vartheta}}
\newcommand{\X}{{\mathcal X}}
\newcommand{\m}{{}^{-1}}
\def\moverlay{\mathpalette\mov@rlay}
\def\mov@rlay#1#2{\leavevmode\vtop{%
   \baselineskip\z@skip \lineskiplimit-\maxdimen
   \ialign{\hfil$\m@th#1##$\hfil\cr#2\crcr}}}
\newcommand{\charfusion}[3][\mathord]{
    #1{\ifx#1\mathop\vphantom{#2}\fi
        \mathpalette\mov@rlay{#2\cr#3}
      }
    \ifx#1\mathop\expandafter\displaylimits\fi}
\long\def\alert#1{\parindent2em\smallskip\hbox to\hsize
{\hskip\parindent\vrule
\vbox{\advance\hsize-2\parindent\hrule\smallskip\parindent.4\parindent
\narrower\noindent#1\smallskip\hrule}\vrule\hfill}\smallskip\parindent0pt}
 \newtheorem{thm}{Theorem}[section]
\newtheorem{cor}[thm]{Corollary}
 \newtheorem{lem}[thm]{Lemma}
 \newtheorem{prop}[thm]{Proposition}
\theoremstyle{definition}
 \newtheorem{defn}[thm]{Definition}
\theoremstyle{remark}
 \newtheorem{rem}[thm]{Remark}
 \newtheorem{ex}[thm]{Example}
 \numberwithin{equation}{section}
\newtheorem*{Theorem A}{\textbf{Theorem A}}
\newtheorem*{Corollary E}{\textbf{Corollary E}}
\newtheorem*{Corollary F}{\textbf{Corollary F}}
\newtheorem*{Theorem B}{\textbf{Theorem B}}
\newtheorem*{Theorem C}{\textbf{Theorem C}}
\newtheorem*{Theorem D}{\textbf{Theorem B}}
\newtheorem*{p a}{\textbf{Proof of  Theorem A}}
\newtheorem*{p b}{\textbf{Proof of Theorem B}}
\newtheorem*{p c}{\textbf{Proof of  Theorem C}}
\newtheorem*{p d}{\textbf{Proof of  Theorem D}}
\DeclareMathOperator{\der}{Der}
\DeclareMathOperator{\dom}{dom}
\DeclareMathOperator{\im}{im}
\DeclareMathOperator{\en}{End}
\DeclareMathOperator{\pen}{\textsf{PEnd}}
\DeclareMathOperator{\pde}{\textsf{PDer}}
\DeclareMathOperator{\ad}{ad}
\DeclareMathOperator{\Hom}{Hom}
\begin{document}

%
\title[Inverse semialgebras and Partial actions  of Lie algebras]{Inverse semialgebras and Partial actions  of Lie algebras}
\author{Mikhailo Dokuchaev}
\address{Departamento de Matem\'atica, Universidade de S\~ao Paulo,
Rua do Mat\~ao, 1010, 05508-090 S\~ao Paulo, Brazil.}
\email{dokucha@ime.usp.br}

\author{Farangis Johari}
\address{Universidade Federal do ABC, Center for Mathematics, Computing and Cognition, Av. dos Estados, 5001-Bangú, Santo André-SP, 09280-560, Brazil.}
\email{farangis.johari@ufabc.edu.br}

\author{ Jos\'e L. Vilca-Rodríguez}
\address{Departamento de Matem\'atica, Universidade de S\~ao Paulo,
Rua do Mat\~ao, 1010, 05508-090 S\~ao Paulo, Brazil.}
\email{jvilca@ime.usp.br}


\thanks{\textit{Mathematics Subject Classification 2020.} Primary: 20M18 Secondary: 17B99, 16W25, 06A12, 17A99.}

\keywords{Inverse semigroup, Lie algebra, inverse semialgebra, partial derivation, partial action, premorphism, partial representation.}

\date{\today}

\begin{abstract} We introduce the concept of a non-associative (i.e. non-necessarily associtive) inverse semialgebra over a field,
the Lie version of which is inspired by the set of all partially defined derivations of a non-associative  algebra, whereas the associative case is based  on such examples as the set of all partially defined linear maps of a vector space, the   set  of all sections of the structural sheaf  of a scheme,  the  set  of all regular functions  defined on open subsets of an algebraic variety and   the set of all smooth real-valued functions defined on open subsets of a smooth manifold. Given a Lie algebra $L$ we define the notion of a partial action of $L$ on a non-associative algebra $A$ as an appropriate premorphism and introduce a  Lie inverse semialgebra $E(L),$ which is a Lie analogue of R. Exel's inverse semigroup $S(G)$ that governs the partial actions of a group $G.$ We  discuss how $E(L)$ controls the premorphisms from $L$ to $A,$ obtaining results on its total control. We define the concept of an $F$-inverse Lie semialgebra and obtain Lie theoretic analogues of some classical results of the theory of inverse semigroups, namely, we show that the category of partial representations of $L$ in  meet semilattices is equivalent to the category   ${\mathcal F}$ of $F$-inverse Lie semialgebras with morphisms that preserve the greatest elements of $\sigma$-classes. In addition,   we establish an adjunction between the category of Lie algebras and the category ${\mathcal F}.$

 \end{abstract}
\maketitle

\tableofcontents


\section{Introduction}

 It is well known that  the  partial symmetries of a  set $X$ form the notorious  symmetric inverse semigroup ${\mathcal I}(X)$  under the composition of the partial bijections taken on the largest possible domain (see \cite{Lawson}). Moreover,  partial group actions on  a set are closely related to inverse semigroups, in particular, to ${\mathcal I}(X).$   Indeed, on the one hand,  Exel's definition of a partial action of a group $G$ on $X,$ given in  \cite[Definition 1.2]{Exel1998}, can be seen as a
  map $G\to {\mathcal I}(X),$ satisfying properties which characterize what we call a partial represention of $G$ into an inverse semigroup \cite[Proposition 4.1]{Exel1998}. In addition, a partial action of $G$ on $X$ can also be defined as a  premorphism of the form $G \to {\mathcal I}(X)$ (see  \cite[Proposition 2.1]{KL}).
  On the other hand, in the same paper \cite{Exel1998}  R. Exel associated to any group $G$ an inverse semigroup $S(G),$  defined by generators and relations, such that the actions of $S(G)$ on a set $X$ are  in a one-to-one correspondence  with the partial actions of $G$ on $X.$ Later,  J. Kellendonk and   M. V. Lawson \cite{KL} proved that $S(G)$  is isomorphic to the Szendrei expansion of $G$ \cite{Szendrei},  the latter being also isomorphic to the Birget-Rhodes prefix expansion of $G$  \cite{BR2}.

The above mentioned  Exel's definition was given as an abstraction of the notion of a partial group action    on a $C^*$-algebra, worked out in the theory of operator algebras in order to  describe important classes of $C^*$-algebras as more general crossed products (see~\cite{exel1994,Mc,E0}). This approach has yielded remarkable results concerning  representations, ideal structures, and $K$-theory of algebras under consideration. A closely related concept (also referred to as a ``partial action") can be found in~\cite{GreenMarcos}, where the authors explore graded modules over quotients of path algebras. 

J.~Kellendonk and M.~Lawson~\cite{Lawson} highlighted the  importance of partial actions for $\mathbb{R}$-trees, model theory, the profinite topology of groups, Fuchsian groups, tilings of Euclidean space, graph immersions and inverse semigroups, as well as topology and group presentations.  Subsequent developments  have extended the theory  to include partial actions of semigroups~\cite{Megre2,Hol1,GouHol1,CornGould,Kud,
Khry1,KudLaan2}, groupoids~\cite{Gilbert,BP,CaenFier,NysOinPin,MaPi}, and categories~\cite{Nystedt}, as well as to the setting of partial   (co)actions of Hopf algebras (or weak Hopf algebras)~\cite{CaenJan,AB3,ABV,CasPaqQuaSant,
BatiVerc1,HuVerc,AzMaPaSi,BaHaSaVe}.
In algebra the new concepts are   useful   to graded algebras,   Hecke algebras,   Leavitt path algebras,  
  inverse semigroups,  restriction  semigroups  and automata (see the survey article \cite{dokuchaev2019} and the references therein). 
  
  More  recent applications include the use of partial actions    to dynamical systems associated to separated graphs and related $C^*$-algebras   \cite{AraE1}, \cite{AraL},  to paradoxical decompositions \cite{AraE1}, to shifts  \cite{AraL}, to full or reduced $C^*$-algebras  of $E$-unitary or strongly $E^*$-unitary inverse semigroups  \cite{MiSt}, 
to  topological higher rank graphs \cite{RenWil}, to Matsumoto and Carlsen-Matsumoto $C^*$-algebras of arbitrary subshifts    \cite{DE2},   to ultragraph $C^*$-algebras \cite{GR3} and to universal $F$-inverse monoids \cite{KudFur}.
 For more information around partial actions and their
applications we refer the reader to R. Exel’s book~\cite{ExelBook} and the surveys~\cite{Ba,dokuchaev2011,dokuchaev2019}.

In view of the numerous interesting developments around partial group actions and applications, it is natural to ask whether a similar approach can be applied to actions by derivations on algebras. The main motivation of this paper is to establish a suitable framework in which an analogue of the theory of partial group actions can be developed in the setting of Lie algebras  and bring into consideration new interesting algebraic structures which naturally appear this way.

It is well known that Lie algebras act  on  algebras by derivations. This observation leads to the following  question:

\begin{center}
\emph{How can a Lie algebra act partially on a  non-necessarily associative algebra?}\\
\end{center}

 Since partial groups actions can be obtained by restricting global actions, it is reasonable to  approach this question looking  at restrictions of global (usual) actions of Lie algebras. Specifically, given an action of a Lie algebra $L$ over a field $\F$ on a  non-associative (i.e. non-necessarily associative)  $\F$-algebra $B$, and a subalgebra $A$ of $B$, we consider a natural restriction of this action to $A$. This way  we obtain the so-called {\em ``partial derivations"} of $A$, which are linear maps defined on subalgebras of $A$ that satisfy the Leibniz identity. It is worth noting that partial derivations (particularity those defined on ideals) appear in the construction of the algebra of quotients of a Lie algebra. The latter was introduced  by M. S. Molina in~\cite{Molina2004}, followed by   interesting developments and applications (see, for example,~\cite{BresarPereraOrtegaMolina} and \cite{OrtegaMolina}).

In order to obtain an analogue of the inverse semigroup of all partial symmetries in our context, we consider the set $\pde(A)$ of all partial derivations of the non-associative algebra $A$. This set can be equipped with an addition $+:\pde(A)\times \pde(A)\to \pde(A)$ and a product (Lie bracket) $[\cdot,\cdot]:\pde(A)\times \pde(A)\to \pde(A)$, each operation being defined on the largest subalgebra of $A$ where it makes sense (see Section~\ref{s: preliminaries}), as well a  scalar multiplication $\F\times \pde(A)\to \pde(A)$, which is compatible with the previous operations. By axiomatizing the properties of this structure, we arrive at the  notion of what we call a {\em ``Lie inverse semialgebras''} (see Definition~\ref{def6}). In particular, $(\pde(A),+)$ is a commutative inverse semigroup. When combined with scalar multiplication, this gives rise to a relaxed version of the notion of a vector space, where the underlying abelian (additive) group is replaced by a commutative (additive) inverse semigroup. We refer to this structure as an {\em ``inverse semivector space''} (see Definition~\ref{def2}).

On the other hand, if we focus exclusively on the vector space structure of $A$, then each partial derivation can be regarded simply as a linear map from a subspace of $A$ into $A$. In this setting, it is natural to consider the set $\pen(V)$ of all partial (linear) endomorphisms of a vector space $V.$ Equipped with addition and scalar multiplication as previously described, $\pen(V)$ forms an inverse semivector space. Furthermore, we can define a {\em ``partial composition''} of two partial endomorphisms as their composition on the largest subspace where it is well-defined. Axiomatizing the properties of this structure leads to the notion of an {\em ``associative inverse semialgebra''.} This structure arises naturally in many important contexts,  including  $\pen(V).$ Examples of associative inverse semialgebras (see Example~\ref{example:Sheaves} for details)
include:
\begin{enumerate}
\item  All sections of the structural sheaf of a scheme  over  a field.
\item All sections of the sheaf of regular functions of an algebraic variety.
\item The set of all locally defined smooth real-valued functions on a smooth manifold.
\item All sections of a sheaf of $C^*$-algebras of locally defined bounded continuous complex-valued functions on a topological space.
\end{enumerate}
 This work is devoted to develop the foundations of the notions mentioned above, including that  of  a \emph{partial action of a Lie algebra} on a non-associative algebra (see Definition~\ref{def:partialAc}), and to study the connection between them.

The structure of the paper is as follows. Section~\ref{s: preliminaries} addresses the preliminary notions. In particular, we introduce the concept of an \emph{inverse semivector space} over a field \( \F \). As mentioned earlier, this is a relaxed version of the notion of a vector space. Also, it is important to note that this concept is also a weaker version of the  known notion of a semimodule over a ring with a unit (in particular, over a field), as explained before Definition~\ref{def2}. We also  define the notion of a \emph{non-associative inverse semialgebra}, as well as the concept of a \emph{Lie inverse semialgebra} and that of an \emph{associative inverse semialgebra}. We provide several  examples  and establish some properties needed  for subsequent sections. Given a Lie algebra \( L \), we introduce the Lie inverse semialgebra \( E(L) \) (see Proposition~\ref{p: E(L)}), which is pivotal in our study.  It is inspired by the above mentioned Exel's inverse semigroup \( S(G) \),   which governs  the partial actions of a group  \( G \) on sets and whose algebra rules the partial representations of  \( G \) in algebras.

In Section~\ref{s: semillatices}, we study a special class of non-associative inverse semialgebras, namely, {\em semilattices of non-associative algebras}. Roughly speaking, these are non-associative inverse semialgebras in which the (additive) idempotents have well-behaved interactions with the product (see Definition~\ref{eq:semilatticeAlg}). More specifically, we focus on semilattices of  associative (or Lie) algebras. An important feature of this structure is its close relationship with presheaves of   associative (or Lie)  algebras, as  we  show in Proposition~\ref{prop-semila}. Furthermore, Corollary~\ref{c: semillatice} establishes that any semilattice of Lie algebras can be embedded into a semilattice of associative algebras.

In Section~\ref{s: partial actions}, we introduce the notion of a premorphism between Lie inverse semialgebras. Inspired by the concept of partial group actions, we define a partial action of a Lie algebra $L$ on a non-associative algebra $A$ as a premorphism from $L$ to $\pde(A)$. One of the main results in  the section is Theorem~\ref{prouniq}, which essentially states that the  above mentioned Lie inverse semialgebra $E(L)$ governs the premorphisms defined on $L$ in a specific way. This result serves as a Lie-theoretic analogue of a well-known result by R. Exel~\cite[Proposition 2.2]{Exel1998} (see also~\cite[Theorem 2.4]{KL}).  Moreover,  Corollary~\ref{cor:tilderho} says that  $E(L)$ controls perfectly the premorphisms from $L$ to  semilattices of algebras.   Another important consequence is  Corollary~\ref{cor:classes} , which  establishes a one-to-one correspondence between  partial actions of $L$ and actions of $E(L)$ when dealing with some important specific classes of algebras.

 An interesting class of inverse semigroups close to groups, which keeps drawing attention  of researchers (see, for instance, \cite{KudFur}) is formed by  $F$-inverse semigroups, and it is natural to consider their Lie  analogue in order to look at Lie inverse   semialgebras, which are close to Lie algebras. It is well known that the category of the so-called $F$-pairs (following the terminology of M.~Petrich in~\cite{Petrich1984}) and their morphisms is equivalent to the category of $F$-inverse semigroups with morphisms that preserve the greatest elements of $\sigma$-classes~\cite[VII.6]{Petrich1984}. An important result due to  M.~Szendrei \cite{Szendrei}  states that there exists an adjunction between  the latter category and the category of groups. In Lemma~\ref{l: f-pairs}, we observe that an $F$-pair can be identified with what is now commonly referred to as a unital partial representation of a group in a meet semilattice,  which can  be also equivalently seen as a unital partial action of the group on the semilattice. 

In Section~\ref{s: F-inverse}, we introduce the notion of a {\em partial representation} of a Lie algebra $L$ in a unital meet semilattice~$\Lambda$, defined as a partial representation of the additive group $(L,+)$ in $\Lambda$ that satisfies an additional compatibility condition. We also adapt the notions of 
 a congruence and the minimum group congruence from semigroup theory to our setting. Subsequently, we present the concept of an {\em $F$-inverse Lie semialgebra} and establish several  basic results concerning partial  representations and $F$-inverse Lie semialgebras. A central  fact of the section is Theorem~\ref{pro-equivalence}, which establishes a categorical equivalence between partial representations of $L$ in unital meet semilattices and $F$-inverse Lie semialgebras with morphisms that preserve the greatest elements of $\sigma$-classes. This can be regarded as a Lie-theoretic analogue of the categorical equivalence from~\cite[VII.6]{Petrich1984}, as  mentioned in the previous paragraph. Furthermore, in Theorem~\ref{theorem-action}, we prove the existence of an adjunction between the category of Lie algebras and the category of $F$-inverse Lie semialgebras with morphisms preserving the greatest elements of $\sigma$-classes. This result serves as a Lie-theoretic counterpart to the adjunction for groups and $F$-inverse semigroups established by M.~Szendrei.

\section{Notions and Preliminaries: Lie and associative inverse semialgebras}\label{s: preliminaries}

In this paper, given  a commutative additive inverse semigroup $S$ we denote by $\E(S)$ the set of all idempotents of $S$. Furthermore, if $S$ is a monoid then $0$ denotes the identity element of $S$. For convenience, we use the notation  $0_x:=x +(-x)$ for any $x\in S.$ The sum $
x +(-x)$ will be simply written as $x-x.$ It follows immediately that $\E(S)=\{0_x\mid x\in S\}$, $0_{x+y}=0_x+0_y$, and $0_e=e$ for any $e\in \E(S)$. We recall that in an inverse semigroup $S$, we have the following natural partial order: 
\[x\preceq y \Longleftrightarrow x=y+e~~\text{for some } e\in \E(S). \] It can be observed that 
 \begin{equation}\label{eq_order}
    x\preceq y \Longleftrightarrow x=y+0_{x}\Longleftrightarrow 0_x=y-x.
\end{equation}

Basics on inverse semigroups can be found, for example, in \cite{Lawson} and  \cite{Petrich1984}.

It is well-known that the  partial symmetries of a given set form an inverse semigroup under the composition of the partial bijections (see \cite{Lawson}). Moreover, the study of partial group actions on a set is closely related to the semigroup of all partial symmetries of that set, as established by R. Exel in 
\cite{Exel1998} (see also \cite{KL}). A natural question arises: can a similar approach be applied to actions by derivations on algebras?  Dealing with this question requires us to consider  what we call an ``{\em inverse semivector space}'', which is a relaxed version of the notion of a vector space, in which the  abelian (additive) group is replaced by a commutative (additive) inverse semigroup. In order to compare this idea with a known concept of a semimodule over a semiring we recall the formal definition of the latter.

\begin{defn}[see~\cite{Golan,Zimmermann}]\label{defsem}
Let $(S,\cdot,+)$ be a semiring.  An additive commutative semigroup $M$ is called a {\em semimodule} over $S$ if  for all $x,y\in M$ and $\alpha,\beta\in S$ the following conditions hold:
\begin{enumerate}
      \item $(\alpha+\beta)x=\alpha x+\beta x$; 
      \item $\alpha (x+y)=\alpha x+\alpha y$;
      \item $\alpha (\beta x)=(\alpha \beta)x$.
\end{enumerate}
Additionally, if $(S,\cdot)$ is a monoid, we also require:
\begin{enumerate}
    \item[(4)] $1x=x$;
\end{enumerate}
and if $(S,+)$ and $M$ are monoids, we further require: 
\begin{enumerate}
    \item[(5)] $0x=0$.
\end{enumerate}
\end{defn}

It is worth noting that if in the  above definition  $S$ is a ring with unity (in particular,  a field) and $M$ is a monoid, then conditions~(1), (4) and (5) imply that $M$ must be a group: the inverse of $x$ will be  $(-1)x$. As a result, we recover the usual notion of a module over a ring. This tells us that considering semimodules over rings does not yield anything new. This poses a problem if we aim to consider notions of linear algebra for commutative inverse monoids over fields, which is crucial for our purposes. The issue arises because the multiplication of any element by the zero scalar always collapses to the zero element in the inverse monoid, causing a loss of the structural information provided by the other idempotents. One way to address this problem is to exclude axiom (5) from Definition~\ref{defsem} for inverse monoids over fields (or rings with unity). We adopt this convention and highlight it in the following definition.  The choice of the term  ``{\em inverse semivector space}'' seems to be very natural, and, moreover, it seems to avoid confusion with existing concepts in the literature.

\begin{defn}\label{def2}
An {\em inverse semivector space} over a field $\F$  is a commutative additive inverse semigroup $(V,+)$ endowed with a multiplication by scalars $\mathbb{F} \times V \rightarrow V$, denoted by $(\alpha,x)\mapsto \alpha x$, which for every $x,y\in V$ and $\alpha,\beta\in \F$ satisfies:
\begin{itemize}
       \item[$(1)$]$(\alpha+\beta)x=\alpha x+\beta x$; 
        \item[$(2)$]$\alpha (x+y)=\alpha x+\alpha y$;
         \item[$(3)$]$\alpha (\beta x)=(\alpha \beta)x$;
          \item[$(4)$]$1x=x.$
   \end{itemize}
\end{defn}


\begin{lem}\label{p1f}
Let $(V,+)$ be an inverse semivector space over the field $\mathbb{F}$. Then, for any $x\in V$ and $\alpha \in \mathbb F$ we have that:
\begin{enumerate}
\item $(-\alpha )x=\alpha (-x)=-\alpha x;$ 
\item $0x=0_{\alpha x}.$ In particular, $0_x=0_{\alpha x}.$ 
\item $\alpha 0_x = 0_x,$ that is $\alpha e=e$ for any $e\in \E(V).$
\end{enumerate}
    \end{lem}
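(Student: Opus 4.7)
The plan is to prove the three items sequentially, since each one will feed into the next. The only subtlety is that we are working in an inverse semigroup, not a group, so subtraction is not freely available; we must invoke the uniqueness of inverses.

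For part (1), I would first prove that $(-\alpha)x$ is the inverse of $\alpha x$ in $(V,+)$ by checking the two defining identities of an inverse semigroup element. Using axiom (1) of Definition~\ref{def2} twice (combined with associativity of $+$), one obtains
\[
(-\alpha)x + \alpha x + (-\alpha)x = \bigl((-\alpha)+\alpha+(-\alpha)\bigr)x = (-\alpha)x,
\]
and similarly $\alpha x + (-\alpha)x + \alpha x = \alpha x$. By uniqueness of the inverse in an inverse semigroup, $(-\alpha)x = -\alpha x$. The identity $\alpha(-x) = -\alpha x$ follows analogously using axiom (2) in place of (1).

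For part (2), having (1) at hand, I would compute
\[
0x = (\alpha + (-\alpha))x = \alpha x + (-\alpha)x = \alpha x + (-\alpha x) = 0_{\alpha x},
\]
where the first equality uses axiom (1), the second is axiom (1), the third is part (1), and the last is the definition of $0_y$. Taking $\alpha = 1$ and applying axiom (4) yields the ``in particular'' statement $0_x = 0x$.

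For part (3), the cleanest route uses the previous item together with axiom (3):
\[
\alpha\, 0_x = \alpha(0x) = (\alpha\cdot 0)x = 0x = 0_x,
\]
where the first equality uses $0_x = 0x$ from (2), the second is axiom (3), and the last is again the $\alpha = 1$ case of (2). Since every idempotent $e \in \E(V)$ is of the form $0_y$ for some $y$, this yields $\alpha e = e$ for all $e \in \E(V)$.

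The main obstacle is really only the first step, because we cannot simply cancel $\alpha x$ from both sides of $(-\alpha)x + \alpha x = 0x$ to conclude $(-\alpha)x = -\alpha x$: we need the full inverse-semigroup characterization of the inverse via the two axioms $z+(-z)+z=z$ and $(-z)+z+(-z)=-z$. Once (1) is established, (2) and (3) are just rearrangements of the four vector-space-like axioms.
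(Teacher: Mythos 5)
Your proof is correct and follows essentially the same route as the paper: part (1) via the two inverse-semigroup identities $(-\alpha)x+\alpha x+(-\alpha)x=(-\alpha)x$ and $\alpha x+(-\alpha)x+\alpha x=\alpha x$ plus uniqueness of inverses, and part (2) by expanding $0x=(\alpha-\alpha)x$. Your part (3) uses $\alpha 0_x=\alpha(0x)=(\alpha\cdot 0)x=0x$ via the associativity axiom, whereas the paper expands $\alpha(x-x)$ by distributivity; both are one-line verifications and the difference is immaterial.
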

\begin{proof}
  (1) The  property $(-\alpha) x=-\alpha x$ follows from the equalities $$\alpha x=(\alpha -\alpha+\alpha)x=\alpha x+(-\alpha)x+ \alpha x \quad \mbox{and} \quad (-\alpha) x=(-\alpha +\alpha-\alpha)x=(-\alpha) x+\alpha x+ (-\alpha) x,$$ as $(V,+)$ is an inverse semigroup. The other equality is proven analogously.

(2) By item (1) we have $0x=(\alpha-\alpha)x=\alpha x+(-\alpha)x=\alpha x-\alpha x=0_{\alpha x}.$ The remaining assertion follows immediately.

(3) Keeping in mind item (2) we see that $\alpha 0_x = \alpha (x-x) = \alpha x - \alpha x = 0_{\alpha x} = 0_{x}.$
\end{proof}

 Note that by (4) of Definition~\ref{def2} and (1) of Lemma~\ref{p1f} we have that 
$$(-1)\cdot x   = -(1 \cdot x) = -x$$ 
for all $x$ in an inverse semivector space $V.$ 

An {\em inverse semivector subspace} of $V$ is subsemigroup   $W$ of $(V,+),$  which is closed under the multiplication by the scalars. If $V$ and $W$ are inverse semivector spaces, then a mapping
 $\phi: V\to W$ is called {\em linear}  if for all $x,y\in V$ and $\alpha\in \F$, we have $\phi(x+y)=\phi(x)+\phi(y)$ and $\phi(\alpha x)=\alpha\phi(x)$. Additionally, if $(V,+)$ and $(W,+)$ are monoids, we also require that $\phi(0)=0$.

\begin{rem}\label{rem:linear} Observe that for the linearity of a mapping  $\phi: V\to W$ between  inverse semivector spaces it is enough to require that 
 $$\phi(x+y)=\phi(x)+\phi(y)\;\;\; \text{and}\;\;\; \phi(\alpha x)=\alpha\phi(x)$$  for all $x,y\in V$ and $0\neq \alpha\in \F .$ Indeed, for $\alpha =0$ we see, using Lemma~\ref{p1f},  that 
 $$\phi (0 x) = \phi (0_x) = \phi (x+(-1)x) =
 \phi (x) + \phi((-1)x)= \phi (x) + (-1) \phi(x)= 
 \phi (x) - \phi(x)= 0_{\phi(x)}= 0 \phi(x).$$
 \end{rem}

\subsection{Inverse semialgebras}

Suppose that  an inverse semivector space $S$ over  the field $\mathbb{F}$ is endowed with a map $\cdot:S \times S \rightarrow S,$ wich we shall denote by $(x,y)\mapsto x\cdot y$ and call {\it a multiplication map}. We shall say that $S$   is {\em right distributive} if 
\begin{equation}\label{RightDistr}
     (y+z)\cdot x=y\cdot x+z\cdot x 
\end{equation} 
 for all  $x,y,z\in S.$ Symmetrically we define the left distributive property, and we shall say that $S$   is {\em distributive} if it is both left and right distributive.

\begin{lem}\label{lemma_distributive} Let $S$ be an inverse semivector space $S$ over  the field $\mathbb{F}$  endowed with a  multiplication map.
\begin{enumerate} 
\item If $S$ is  right distributive then then $0_x \cdot y=0_{x\cdot y},$ for all $x,y\in S.$ 
\item If $S$ is  left distributive then then $x \cdot 0_y=0_{x\cdot y},$ for all $x,y\in S.$ 
\item If $S$ is  distributive  then $0_x \cdot 0_y=0_{x\cdot y},$ for all $x,y\in S.$ 
 \end{enumerate}
\end{lem}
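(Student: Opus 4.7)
The plan is to reduce everything to a uniqueness-of-inverse argument inside the commutative inverse semigroup $(S,+)$. For part~(1), I would first apply right distributivity to rewrite
\[
0_x\cdot y \;=\; (x+(-x))\cdot y \;=\; x\cdot y + (-x)\cdot y,
\]
so the claim $0_x\cdot y = 0_{x\cdot y} = x\cdot y - x\cdot y$ reduces to showing that $(-x)\cdot y = -(x\cdot y)$. To establish this, I would verify that $(-x)\cdot y$ satisfies the two defining relations of the semigroup inverse of $x\cdot y$ in $(S,+)$: again by right distributivity,
\[
x\cdot y + (-x)\cdot y + x\cdot y \;=\; (x+(-x)+x)\cdot y \;=\; x\cdot y,
\]
and analogously $(-x)\cdot y + x\cdot y + (-x)\cdot y = (-x)\cdot y$. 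Uniqueness of the inverse in $(S,+)$ then forces $(-x)\cdot y = -(x\cdot y)$, and combining with the first displayed equation yields $0_x\cdot y = x\cdot y - x\cdot y = 0_{x\cdot y}$.

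Part~(2) is entirely symmetric, using left distributivity in place of right distributivity, and requires no new idea.

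For part~(3) I would iterate: applying (1) with $y$ replaced by $0_y$ gives $0_x\cdot 0_y = 0_{x\cdot 0_y}$, and applying (2) gives $x\cdot 0_y = 0_{x\cdot y}$. Hence $0_x\cdot 0_y = 0_{0_{x\cdot y}}$, which is just $0_{x\cdot y}$ since $0_{x\cdot y}$ is already idempotent (so it is its own additive inverse and therefore coincides with its associated $0$-element).

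I do not anticipate a real obstacle; the only subtle point is to resist manipulating $(-x)\cdot y$ directly. The paper's definitions give no a priori link between the additive inversion $x\mapsto -x$ and the multiplication map $\cdot$ beyond distributivity, and one does not have any associativity of scalar multiplication with $\cdot$ either. The uniqueness of inverses in the commutative inverse semigroup $(S,+)$ is precisely what bridges this gap and keeps the argument short.
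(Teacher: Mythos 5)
Your proof is correct and follows essentially the same route as the paper's: the paper computes $0_x\cdot y=(x-x)\cdot y=x\cdot y-x\cdot y=0_{x\cdot y}$ in one line, silently using the identity $(-x)\cdot y=-(x\cdot y)$ that you justify explicitly via uniqueness of inverses in $(S,+)$ — the same device the paper itself employs in Lemma~\ref{p1f}(1). Your handling of parts (2) and (3), including the observation that $0_{0_{x\cdot y}}=0_{x\cdot y}$ because $0_{x\cdot y}$ is idempotent, coincides with the paper's argument.
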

\begin{proof}
(1) We see that $0_x\cdot y=(x-x)\cdot y=x\cdot y-x\cdot y=0_{x\cdot y}$ and (2) follows symmetrically.
(3) Using (1) and (2) we obtain $0_x \cdot 0_y = 0_{x \cdot 0_y}= 0_{0_{x\cdot y}}   =  0_{x \cdot y},$ as desired{\color{red} .}
\end{proof}

  Usually  ``{\em non-associative}'' means ``{\em non necessarily associative}''. Keeping this in mind we give the next:

\begin{defn}\label{def4}
A  {\em non-associative inverse semialgebra}   over the field $\mathbb{F}$ is an inverse semivector space $S$ over  $\mathbb{F}$ together with 
a map  $\cdot:S \times S \rightarrow S,$ denoted by $(x,y)\mapsto x\cdot y,$ such that for every $x,y,z\in S$ and $0\neq \alpha\in \F$ we have:
\begin{enumerate}
\item $\alpha (x\cdot y)=(\alpha x)\cdot y=x\cdot (\alpha y);$

\item $x\cdot ( y+z)\succeq x\cdot y+x \cdot z;$
\item $(x+z)\cdot y\succeq x\cdot y+z\cdot y;$

\item $x\cdot ( e+z)= x\cdot e+x \cdot z$ for all $e\in \E(S);$
 \item $(x+e)\cdot z= x\cdot z+e\cdot z$  for all $e\in \E(S);$
 \item $e+f \preceq  e\cdot f \preceq  f $  for all $e,f\in \E(S).$
\end{enumerate}
\end{defn}

We shall say that  a non-associative inverse semialgebra  $S$ over the field  $\mathbb{F}$ is {\em associative}  if  $$(x\cdot y)\cdot z=x\cdot ( y\cdot z)$$
for all $ x,y,z \in S.$

\begin{lem}\label{lemma_basicproperties}
Let $S$ be a  non-associative inverse semialgebra   over the field $\mathbb{F}.$ Then

\begin{enumerate}
\item $e \cdot f + f\cdot e = e+f$ for all $e,f \in \E (S);$
\item $x\cdot 0_y, \, 0_x \cdot y \in \E(S)$ for all $x,y\in S;$
     \item 	$0_{x\cdot y}\preceq 0_x\cdot y,$ 
     $0_{x\cdot y}\preceq x \cdot 0_y$ and $0_{x\cdot y}\preceq 0_x \cdot 0_y$ for all $x,y\in S;$ 
    \item $x \cdot (y+z)-(x\cdot y+x\cdot z) = 0_{x\cdot y}+0_{x\cdot z}$ for all $x,y,z\in S;$
\end{enumerate} 
\end{lem}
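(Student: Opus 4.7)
The plan is to derive all four statements by combining the axioms of Definition~\ref{def4} with two general facts about the natural order in a commutative inverse semigroup, which I will use throughout: (i) $\preceq$ is compatible with addition (if $a\preceq a'$ and $b\preceq b'$ then $a+b\preceq a'+b'$, which follows from $0_{a+b}=0_a+0_b$); and (ii) if $a\preceq b$ then $0_a\preceq 0_b$ (apply $0_\bullet$ to $a=b+0_a$ to obtain $0_a=0_b+0_a$, the semilattice characterization of $0_a\preceq 0_b$).

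For item (1), axiom~(6) of Definition~\ref{def4} gives $e+f\preceq e\cdot f\preceq f$, and swapping $e,f$ yields $e+f\preceq f\cdot e\preceq e$; fact (i) combined with idempotency of $e+f$ then produces both $e\cdot f+f\cdot e\preceq f+e=e+f$ and $e+f=(e+f)+(e+f)\preceq e\cdot f+f\cdot e$, and antisymmetry of $\preceq$ finishes. For item (2), the slick trick is to set $e=z=0_y$ in item (4) of Definition~\ref{def4}, which yields $x\cdot 0_y=x\cdot(0_y+0_y)=x\cdot 0_y+x\cdot 0_y$, showing $x\cdot 0_y\in\E(S)$; symmetrically, item (5) of Definition~\ref{def4} gives $0_x\cdot y\in\E(S)$.

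For item (3), I would write $0_x\cdot y=(x+(-x))\cdot y\succeq x\cdot y+(-x)\cdot y=x\cdot y-x\cdot y=0_{x\cdot y}$, applying item (3) of Definition~\ref{def4} and using item (1) of that definition (with $\alpha=-1$) to move the sign across the product; the analogous calculation with item (2) of Definition~\ref{def4} gives $x\cdot 0_y\succeq 0_{x\cdot y}$. For the last inequality, another application of item (2) of Definition~\ref{def4} yields $0_x\cdot 0_y=0_x\cdot(y+(-y))\succeq 0_x\cdot y-0_x\cdot y=0_{0_x\cdot y}$, and since item (2) of the present lemma has just shown $0_x\cdot y\in\E(S)$, this last idempotent equals $0_x\cdot y$; transitivity with $0_x\cdot y\succeq 0_{x\cdot y}$ concludes.

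The main obstacle is item (4), since unlike the previous parts it demands an exact equality rather than just an inequality. From item (2) of Definition~\ref{def4} we have $B:=x\cdot y+x\cdot z\preceq x\cdot(y+z)=:A$, so (\ref{eq_order}) gives $B=A+0_B$ with $0_B=0_{x\cdot y}+0_{x\cdot z}$. Then
$$A-B=A+(-A)+(-0_B)=0_A+0_B=0_{x\cdot(y+z)}+0_{x\cdot y}+0_{x\cdot z},$$
using $-e=e$ for every idempotent $e$ (a consequence of Lemma~\ref{p1f}). To identify this with $0_{x\cdot y}+0_{x\cdot z}$ I invoke fact (ii): from $B\preceq A$ one deduces $0_B\preceq 0_A$, that is, $0_{x\cdot y}+0_{x\cdot z}\preceq 0_{x\cdot(y+z)}$, which in the semilattice of idempotents means precisely that adding $0_{x\cdot(y+z)}$ to $0_{x\cdot y}+0_{x\cdot z}$ does not change it, yielding the desired equality.
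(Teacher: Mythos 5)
Your proof is correct; all four items check out, and for items (1), (2) and (4) your argument coincides with the paper's (for (4) the paper simply reads off $x\cdot(y+z)-(x\cdot y+x\cdot z)=0_{x\cdot y+x\cdot z}$ from the equivalence \eqref{eq_order}, whereas you re-derive that identity via $A-B=0_A+0_B$ and the absorption $0_A+0_B=0_B$; same content, slightly longer route). The one place where you genuinely diverge is item (3): the paper expands $(x+0_x)\cdot(y+0_y)$ using the exact distributivity over idempotents (axioms (4)--(5) of Definition~\ref{def4}) and obtains all three inequalities at once from the single identity $0_{x\cdot y}=0_{x\cdot y}+x\cdot 0_y+0_x\cdot y+0_x\cdot 0_y$, while you decompose $0_x=x+(-x)$ and $0_y=y+(-y)$ and invoke the inequality axioms (2)--(3) together with the scalar axiom (1) (via $(-x)\cdot y=-(x\cdot y)$), then chain $0_{x\cdot y}\preceq 0_x\cdot y\preceq 0_x\cdot 0_y$ by transitivity. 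Your variant trades the exact idempotent-distributivity axioms for the scalar axiom plus transitivity; both are equally elementary, and neither buys anything the other doesn't, so this is a matter of taste rather than substance.
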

\begin{proof}
(1)  By item (6) of Definition~\ref{def4}  we have  $e+f \preceq  e\cdot f \preceq  f $ and  $f+e \preceq  f\cdot e \preceq  e ,$ for any $e,f \in \E (S),$ which gives  $$e+f \preceq  e\cdot f + f \cdot e \preceq  e+ f,  $$ implying the desired equality.

(2)  By item (4) of Definition~\ref{def4}   we obtain that
$$ x\cdot 0_y + x\cdot 0_y = x\cdot (0_y +  0_y)
=  x\cdot 0_y $$ and, analogously, $ 0_x \cdot y \in \E(S).$ 

(3) By (4) and  (5)  of Definition~\ref{def4} we see that 
$$0_{x\cdot y}=x\cdot y-x\cdot y=(x+0_x)\cdot (y+0_y)-x\cdot y=x\cdot y + x\cdot 0_y + 0_{x}\cdot y+0_{x}\cdot 0_y -x\cdot y = 0_{x\cdot y} + x\cdot 0_y + 0_{x}\cdot y+0_{x}\cdot 0_y,$$ which implies the desired inequalitites in view of item (2).
 
(4) Since,  $x\cdot y+x\cdot z\preceq x\cdot (y+z)$ (see (3) of Definition~\ref{def4}),  we obtain that
$$x\cdot (y+z)-(x\cdot y+x\cdot z)=0_{x\cdot y+x\cdot z}=0_{x\cdot y}+0_{x\cdot z}.$$
\end{proof}

Notice that by (2) of 
 Lemma~\ref{lemma_basicproperties}  we have that $ef \in \E(S)$  for any $e,f \in \E (S).$ In addition, keeping in mind (2) of Lemma~\ref{p1f}, observe that  
 (3) of  Lemma~\ref{lemma_basicproperties} says  that 
     $$0 (x\cdot y) \preceq (0x)\cdot y, \;\;\; 0(x\cdot y)\preceq x \cdot (0y)  \;\;\; \text{and} \;\;\; 
     0(x\cdot y)\preceq (0x) \cdot (0y)$$ for all $x,y\in S.$

Let $V$ be a vector space over $\mathbb{F}.$ A {\em partial (linear) endomorphism} of $V$ is a linear map $\phi: K\to V$, where $K$ is a subspace of $V$. The set of all partial endomorphisms of $V$ will be denoted by {\sf PEnd}$(V)$. Given   $\phi_1: K_1\to V$ and $\phi_2: K_2\to V$ in $\pen(V)$ and $\alpha\in \F$, we define the following operations:
\begin{align*}
   \phi_1+\phi_2 &: K_1\cap K_2\to V, \,x\mapsto \phi_1(x)+\phi_2(x); \\
   \phi_1\phi_2 &: \phi_2^{-1}(K_1)\to V, \, x\mapsto \phi_1(\phi_2(x));\\
   \alpha\phi_1 &: K_1\to V, \, x\mapsto \alpha\phi_1(x).
\end{align*}
Observe that the sum and the product of  $\phi_1$ and $\phi_2$ are defined on the largest possible domains  where the expressions $\phi_1(x)+\phi_2(x)$ and $\phi_1(\phi_2(x))$, respectively,  make sense. 
\begin{prop}\label{p20}
Let $V$ be a vector space. Then,  $\pen(V)$ with the above defined operations  is a right distributive associative inverse semialgebra.
\end{prop}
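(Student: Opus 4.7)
The plan is to verify all the axioms directly, working out the concrete meaning of the abstract structures in terms of subspaces and restrictions. First I would identify the basic skeleton. For $\phi : K \to V$ in $\pen(V)$, the additive inverse is $-\phi : K \to V$, so $\phi + (-\phi) = 0_\phi$ is the zero map on $K$. Hence $(\pen(V), +)$ is a commutative inverse semigroup, and its set of idempotents $\E(\pen(V))$ is in bijection with the lattice of subspaces of $V$: each $e \in \E(\pen(V))$ is the zero map $0_K$ on some subspace $K \subseteq V$. The natural partial order $\phi_1 \preceq \phi_2$ translates to the condition $\dom(\phi_1) \subseteq \dom(\phi_2)$ with $\phi_1 = \phi_2|_{\dom(\phi_1)}$, by \eqref{eq_order} and the formula for $+$. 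The inverse semivector space axioms of Definition~\ref{def2} follow pointwise because scalar multiplication preserves the domain.

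Next I would check the six conditions of Definition~\ref{def4}. Condition~(1) is immediate from the pointwise definitions, since for $\alpha \neq 0$ the domains $\phi_2^{-1}(K_1)$, $(\alpha\phi_2)^{-1}(K_1)$ and $\phi_2^{-1}(\alpha K_1) = \phi_2^{-1}(K_1)$ coincide. For the inequalities in (2) and (3), the crucial observation is a domain comparison: for instance, in (2), $\phi_1 \cdot (\phi_2 + \phi_3)$ has domain $\{x \in K_2 \cap K_3 : \phi_2(x) + \phi_3(x) \in K_1\}$, while $\phi_1 \cdot \phi_2 + \phi_1 \cdot \phi_3$ has the smaller domain $\{x \in K_2 \cap K_3 : \phi_2(x),\, \phi_3(x) \in K_1\}$, on which the two expressions agree. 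This yields the required $\preceq$. Conditions~(4) and (5) use that if $e = 0_{K_e}$, then $e + \phi_3 = \phi_3|_{K_e \cap K_3}$ and $e$ sends everything to $0 \in K_1$, so the containment above becomes an equality. For (6), if $e = 0_{K_e}$ and $f = 0_{K_f}$, a direct calculation gives $e \cdot f = 0_{K_f} = f$, hence trivially $e \cdot f \preceq f$, while $e + f = 0_{K_e \cap K_f}$ is the restriction of $f$, so $e+f \preceq f = e\cdot f$.

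For associativity, both $(\phi_1 \phi_2)\phi_3$ and $\phi_1(\phi_2 \phi_3)$ have domain $\phi_3^{-1}(\phi_2^{-1}(K_1)) = (\phi_2 \phi_3)^{-1}(K_1)$, on which they agree by ordinary associativity of set-theoretic composition. Right distributivity likewise reduces to a matching of domains: both $(\phi_2 + \phi_3) \cdot \phi_1$ and $\phi_2 \cdot \phi_1 + \phi_3 \cdot \phi_1$ have domain $\phi_1^{-1}(K_2 \cap K_3) = \phi_1^{-1}(K_2) \cap \phi_1^{-1}(K_3)$, and on this common domain the pointwise formulas coincide.

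The main obstacle, as the outline suggests, is not any single deep argument but the bookkeeping of domains: one must consistently identify when inclusion of domains is strict (forcing the $\preceq$ in (2), (3), and the first half of (6)) versus when an equality holds (as in (4), (5), associativity, and right distributivity). The failure of left distributivity in general, incidentally, is precisely the phenomenon that explains why (2) of Definition~\ref{def4} is only an inequality: composing on the left with $\phi_1$ can shrink the domain by extra preimage conditions that are automatically satisfied on the right.
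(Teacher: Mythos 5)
Your proposal is correct and follows essentially the same route as the paper's proof: establish the commutative inverse monoid and semivector space structure, identify idempotents with zero maps on subspaces and the order with restriction, then verify axioms (1)--(6) and right distributivity by the same domain comparisons (including the key facts that $e\cdot f = f$ for idempotents and that the domains match exactly in (4), (5), associativity, and right distributivity but only include in (2) and (3)). No gaps.
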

\begin{proof}
First, we show that $(\pen(V),+)$ is a commutative inverse monoid. Evidently, the sum is associative and commutative, and the  zero map $0: V\to V$ is the neutral element. Now, for $\phi:K\to V\in \pen(V)$  it is clear that its (additive) inverse is given by $-\phi:K\to V, \, x\mapsto -\phi(x)$. 

 For the rest of the proof let  $\phi_1: K_1\to V,$ $\phi_2: K_2\to V$ and $\phi_3: K_3 \to V$  be in  $\pen(V).$

  It is easy check that $\pen(V)$ is an inverse semivector space. Indeed,  to see (1) of Definition~\ref{def2}  observe that $\dom (\alpha  (\phi_1+\phi_2))=K_1\cap K_2=\dom (\alpha  \phi_1+ \alpha  \phi_2)$, and for $x\in K_1\cap K_2$ the equality  $(\alpha  (\phi_1+\phi_2))(x)=(\alpha  \phi_1)(x)+(\alpha \phi_2)(x)$ is immediate. Similarly, the equalities 
$$\dom ((\alpha +\beta)  \phi_1)=K_1=\dom (\alpha  \phi_1+ \beta  \phi_1) \;\;\; \text{and} \;\;\; 
\dom ((\alpha \beta)  \phi_1)=K_1=\dom (\alpha (  \beta  \phi_1))$$ readily give us (2) and (3) of Definition~\ref{def2}, respectively. Finally, it is obvious that $1_{\mathbb F}\phi_1=\phi_1,$ so that  (4) of Definition~\ref{def2} also holds.

		
Notice that $\E(\pen(V))=\{0_\phi\mid \phi\in \pen(V)\}$, and $\phi_1\preceq \phi_2\Longleftrightarrow \phi_1$ is a restriction of $\phi_2$. Since the composition of partial endomorphisms is associative, it remains to  verify that the inverse semivector space $\pen(V)$ satisfies Definition~\ref{def4} and that $\pen(V)$ is right distributive:
\begin{itemize}
\item[(1)] For  $ 0\neq \alpha \in \mathbb{F}$ we have $$\dom (\alpha (\phi_1 \phi_2))=\dom ((\alpha \phi_1)\phi_2)=\dom (\phi_1 (\alpha \phi_2)))=\phi_2^{-1}(K_1),$$
and clearly 
 $\alpha (\phi_1\phi_2)(x)=(\alpha \phi_1)(\phi_2 (x))=  \phi_1((\alpha \phi_2)(x))$ for all $x\in \phi_2^{-1}(K_1).$
\item[(2)] Note that $\dom(\phi_1 ( \phi_2+\phi_3))= (\phi_2+ \phi_3)^{-1} (K_1)$ and $ \dom
             (\phi_1\phi_2)\cap \dom ( \phi_1\phi_3)=
                          \phi_2^{-1}(K_1)\cap \phi_3^{-1}(K_1).$ Then $\dom (\phi_1\phi_2+ \phi_1  \phi_3 )\subseteq \dom\phi_1( \phi_2+\phi_3) $, and for all $x\in \phi_2^{-1}(K_1)\cap \phi_3^{-1}(K_1)$ we have 
  \begin{align*}
      ( \phi_1  (\phi_2+\phi_3))(x)=       
        (\phi_1 \phi_2)(x)+( \phi_1\phi_3) (x),     \end{align*}
yielding $\phi_1( \phi_2+\phi_3)\succeq \phi_1\phi_2+\phi_2  \phi_3.$
       \item[$(3)$] Observe that $$\dom (( \phi_1+\phi_2)\phi_3 )= \phi_3^{-1} (K_1\cap K_2)=\phi_3^{-1}(K_1)\cap\phi_3^{-1}(K_2)=\dom(\phi_1\phi_3+\phi_2\phi_3),$$ and for all $x\in\phi_3^{-1} (K_1\cap K_2)$ we have 
       \begin{align*}
      ((\phi_1+\phi_2)\phi_3)(x)=(\phi_1 \phi_3)(x)+( \phi_2\phi_3) (x).      
                \end{align*}
 Consequently, $ ( \phi_1+\phi_2)\phi_3= \phi_1\phi_3+\phi_2\phi_3,$ proving  \eqref{RightDistr}.  In particular, (3) of Definition~\ref{def4} holds. 


\item[$(4)$] 
 It is easily seen that, $$\dom (\phi_1(0_{\phi_3} +\phi_2))=(0_{\phi_3} +\phi_2)^{-1}(K_1)=\{x\in K_3\cap K_2\mid \phi_2(x)\in K_1\}=K_3\cap (\phi_2)^{-1}(K_1),$$
and $$\dom (\phi_10_{\phi_3} +\phi_1\phi_2)= 0_{\phi_3}^{-1}(K_1)\cap(\phi_2)^{-1}(K_1) =K_3\cap \phi_2^{-1}(K_1).$$ 
Thus $\dom (\phi_1(0_{\phi_3} +\phi_2))=\dom (\phi_10_{\phi_3} +\phi_1\phi_2)$. As 
        $$\phi_1 ((0_{\phi_3}+\phi_2)(x))= \phi_1  (\phi_2 (x)) =  \phi_1 (0_{\phi_3} (x))+ \phi_1(\phi_2(x)),$$ we conclude that $\phi_1 (0_{\phi_3}+\phi_2)=\phi_10_{\phi_3} +\phi_1\phi_2.$
        
\item[$(5)$] This item follows from the right distributivity property \eqref{RightDistr}.
\item[$(6)$] Clearly,   $\dom (0_{\phi_1}  0_{\phi _2})=  0_{\phi_2}^{-1}(K_1) = K_2$ and  
so that  $$  \dom (0_{\phi_1} +0_{\phi _2})  = K_1 \cap K_2 \subseteq  K_2= \dom ( 0_{\phi_1}  0_{\phi _2})=  \dom (0_{\phi _2}),$$  resulting in 
$$ 0_{\phi_1} +0_{\phi _2}  \preceq    0_{\phi_1}  0_{\phi _2}=  0_{\phi _2},  $$ 
in particular,  (6) of Definition~\ref{def4} holds.

\end{itemize}  
\end{proof}

 Observe that $\pen(V)$  satisfies the following specific property:
$$\phi_10_{\phi_2}=0_{\phi_2},$$ for all  $ \phi_1, \phi_2 \in \pen(V).$
 Indeed,  using the above notation,  
 $\dom (\phi_1 0_{\phi_2}) =0_{\phi_2}^{-1}(K_1)=K_2=\dom (0_{\phi_2})$, and 
$ \phi_1 0_{\phi_2}(x)= \phi_1 (0)= 0=0_{\phi_2}(x),  $ for all $x\in K_2,$ 
 as desired.

\subsection{Lie inverse semialgebras}\label{subsec:LieInv}
\begin{defn}\label{def6}
   A {\em Lie  inverse semialgebra} over the field $\mathbb{F}$ is an inverse semivector space  $S$ over the field $\mathbb{F}$ with a map  
	$[\cdot,\cdot]:S \times S \rightarrow S$ given by $(x,y)\mapsto [x, y]$  such that for every $x,y,z\in S$ and  $0\neq \alpha\in \F$
   \begin{enumerate}
     
     \item $ \alpha [x,y]= [\alpha x,y]= [x,\alpha y];$
     \item $[x,y+z]\succeq [x,y]+[x,z];$
     \item $[x,y]=-[y,x];$
     \item  $[x,e+z]= [x,e]+[x,z]$ for all $e\in \E(S);$
     \item $J(x,y,z):=[x,y,z]+[y,z,x]+[z,x,y]\preceq 0_{x+y+z};$
     \item $e+f=[e,f] $ for all $e,f\in \E(S).$
   \end{enumerate}
\end{defn}
 The operation $[\cdot,\cdot]$ in Definition~\ref{def6} will be called the Lie bracket.

 \begin{rem}  It might be interesting for the reader to compare our Definition~\ref{def6} with the notion of a Lie $\preceq$-semialgebra with a preorder $\preceq$ given  in~\cite[Definition 3.4]{Chapman}, where the authors deal with semialgebras equipped with a negation map.
\end{rem}

\begin{rem}\label{rem:LieInverseOk} Any Lie  inverse semialgebra over the field $\mathbb{F}$ is a non-associative inverse semialgebra over $\mathbb{F}.$ Indeed,    condition $(2)$ in  Definition \ref{def6} implies that $$-[x,y+z]\succeq -( [x,y]+[x,z])=- [x,y]-[x,z].$$ So, by $(3)$ of Definition \ref{def6}  we get $[y+z,x]\succeq [y,x]+[z,x],$ which gives (3) of Definition~\ref{def4}. Next,  combining~(3) and ~(4) of Definition~\ref{def6},  we see that $$[z+e,x]=-[x,z+e]=-[x,z]-[x,e]=[z,x]+[e,x],$$
which shows (5) of Definition~\ref{def4}. Finally, (6)
of Definition~\ref{def4} immediately follows from
(6) of Definition \ref{def6}. Thus, it is clear that if a 
 non-associative inverse semialgebra $S$, whose operation is denoted by the Lie bracket, satisfies  (3), (5) and (6)
 of Definition \ref{def6}, then $S$ is a Lie  inverse semialgebra. In this case we shall also say that   the non-associative inverse semialgebra $S$ is Lie.
\end{rem}

 A {\em Lie inverse subsemialgebra} of a Lie inverse semialgebra $S$ is an inverse semivector subspace of $S$ which is closed under the Lie bracket. A  {\em homomorphism} between two Lie inverse semialgebras $S$ and $T$ is an $\F$-linear map $\phi: S\to T$ such that $\phi([x,y])=[\phi(x),\phi(y)]$ for all $x,y\in S$. Evidently, the Lie inverse semialgebras with their homomorphisms form a category. 

The following lemma deals with some elementary properties of the Lie inverse semialgebras, which we will frequently use throughout the paper.

\begin{lem}\label{lemma-basicprop}
 Let $S$ be a Lie inverse semialgebra over the field $\mathbb{F}$. Then  for all $x,y,z\in S$ and $e \in \E(S)$, we have:
\begin{enumerate}
    \item $[x,-y]=- [x,y]=[-x,y];$
    \item $[x,e]\in \E(S);$ 
    \item $0_{[x,y]}\preceq [x,0_y]+[0_x,y]+0_{x+y};$
    \item If {\rm char} $\F \neq 2$, then $[x,x]\preceq 0_x$.
\end{enumerate}
\end{lem}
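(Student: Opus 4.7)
For part (1), I would specialize Definition~\ref{def6}(1) to the nonzero scalar $\alpha=-1$, obtaining $[-x,y]=(-1)[x,y]=[x,-y]$, and then rewrite $(-1)[x,y]=-[x,y]$ using the identity displayed immediately after Lemma~\ref{p1f}. For part (2), observe that $e=0_e$ for every idempotent $e$, and that by Remark~\ref{rem:LieInverseOk} our Lie inverse semialgebra $S$ is in particular a non-associative inverse semialgebra whose product is the bracket; hence Lemma~\ref{lemma_basicproperties}(2) gives $[x,e]=[x,0_e]\in\E(S)$.

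For part (3) the strategy is to bound the idempotent $0_{[x,y]}$ by each of the three idempotent summands on the right-hand side separately, and then to exploit the meet-semilattice structure of $\E(S)$. Invoking Remark~\ref{rem:LieInverseOk} once more, applying Lemma~\ref{lemma_basicproperties}(3) with $\cdot=[\cdot,\cdot]$ yields
\[
0_{[x,y]}\preceq [x,0_y],\qquad 0_{[x,y]}\preceq [0_x,y],\qquad 0_{[x,y]}\preceq [0_x,0_y].
\]
By Definition~\ref{def6}(6) the last right-hand side equals $0_x+0_y$, which is $0_{x+y}$ by the basic identity for idempotents recalled in the preliminaries. Since for an idempotent $f$ the relation $f\preceq g$ is equivalent to $f=f+g$, the routine identity $(g_1+g_2+g_3)+f=f$ (obtained by absorbing each $g_i$ in turn) shows that $f\preceq g_1,g_2,g_3$ implies $f\preceq g_1+g_2+g_3$; applied to our three inequalities this delivers the desired bound.

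For part (4), assume $\mathrm{char}\,\F\neq 2$. Combining the antisymmetry of Definition~\ref{def6}(3) with the inverse semivector space identity $(1+1)z=z+z$ gives $2[x,x]=[x,x]+[x,x]=[x,x]-[x,x]=0_{[x,x]}$. Multiplying by $1/2$ and applying Lemma~\ref{p1f}(3) (scalars fix idempotents) yields $[x,x]=0_{[x,x]}$, so $[x,x]$ is itself an idempotent. Finally Lemma~\ref{lemma_basicproperties}(3) together with Definition~\ref{def6}(6) gives $0_{[x,x]}\preceq[0_x,0_x]=0_x+0_x=0_x$, whence $[x,x]\preceq 0_x$. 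The only non-routine step in the whole proof is the semilattice reasoning in (3): the statement looks like a triangle-type inequality, but is really a pointwise bound combined with the fact that, on $\E(S)$, addition coincides with the meet, so that a common lower bound of several idempotents is automatically a lower bound of their sum.
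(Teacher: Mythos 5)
Your proof is correct and follows essentially the same route as the paper's: part (1) via Definition~\ref{def6}(1) with $\alpha=-1$, part (2) via Lemma~\ref{lemma_basicproperties}(2), part (3) via Lemma~\ref{lemma_basicproperties}(3) together with $[0_x,0_y]=0_x+0_y=0_{x+y}$, and part (4) via antisymmetry and division by $2$. The only (welcome) difference is that you spell out the semilattice step in (3) — passing from the three separate bounds of Lemma~\ref{lemma_basicproperties}(3) to the bound by their sum — which the paper leaves implicit, and in (4) you reach $0_{[x,x]}\preceq 0_x$ directly from $[0_x,0_x]=0_x$ rather than by citing item (3) of the lemma itself.
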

\begin{proof}

(1)  Using  $(1)$ of Definition \ref{def6}   and Lemma~\ref{p1f}, we obtain
  $$  [x,-y]= [x,(-1)y]=(-1)[x,y]=-[x,y].$$  
Similarly, $[-x,y]=-[x,y].$\\


\noindent (2) This directly follows from (2) of  Lemma~\ref{lemma_basicproperties}.\\


\noindent (3)  Item (3) of  Lemma~\ref{lemma_basicproperties} gives
$$0_{[x,y]} \preceq  [0_x,y]+[x,0_y]+  [0_x, 0_y] =  [0_x,y]+[x,0_y] + 0_{x+y},$$ since  $ [0_x, 0_y] =  0_x + 0_y =    0_{x+y}$ thanks to condition (6) of
 Definition~\ref{def6}.\\

\noindent (4) By item~(3) of Definition~\ref{def6}, we have $[x,x]+[x,x]=[x,x]-[x,x]=0_{[x,x]}\in \E(L)$. Thus $[x,x]=\frac{1}{2}0_{[x,x]}=0_{[x,x]}$, which,   by item~(3), implies  $[x,x]\preceq 0_{x+x} = 0_{2x} = 0_x.$
\end{proof}

\begin{rem} It  follows from (2) of Lemma~\ref{lemma-basicprop} and items (3) and (4) of  Definition~\ref{def6} that if $S$ is a  Lie inverse semialgebra over  $\mathbb{F}$ and $x,y \in S,$  then 
\begin{equation}\label{eq:ineq}
x \preceq  y \Rightarrow [x,z] \preceq [y,z]\;\;\text{and}\;\;  [z,x] \preceq [z,y]\;\;\; \forall z\in S.
\end{equation} Indeed, $x \preceq y$ means that $x= e+ y$ for some $e\in \E (S).$ Then $[x,z] = [e+y,z] = [e,z]+
[y,z] \preceq [y,z],$ and the second inequality is an
immediate  consequence of the first one.
\end{rem}

\begin{ex}
Every inverse semivector space $V$ has a trivial structure of a Lie inverse semialgebra, which is given by $[x,y]=0_{x+y}$ for any $x,y\in V$. 
\end{ex}

\begin{rem}\label{rem:InvAssocLieInequality} 
    If  $S$ is an  associative inverse semialgebra over the field
      $\mathbb{F},$ then
\begin{equation}\label{eq:InvAssocLieInequality}   
    0_{xy -yx} \preceq 0_x + 0_y,
\end{equation}  for all $x,y \in S.$ Indeed, using  (3) of  Lemma~\ref{lemma_basicproperties} and  (6) of Definition~\ref{def4}, we see that 
$$ 0_{xy - yx} =  0_{xy} + 0_{yx}\preceq 0_{x}0_{y} + 0_{y}0_{x} \preceq 0_y + 0_x,$$ as desired.
\end{rem}

\begin{prop}\label{p19}
Let $S$ be a right distributive associative inverse semialgebra over the field $\mathbb{F}.$ Then, the inverse semivector space $S$ endowed with the bracket $[x,y]=xy-y x$ is a Lie inverse semialgebra in which  the following identity holds
\begin{equation}\label{eq:Sminus}
0_{[x,y]}=[0_x,y]+[x,0_y],
\end{equation}  for all $x,y \in S.$
\end{prop}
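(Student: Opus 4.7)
The plan is to verify each of the six axioms of Definition~\ref{def6} for the bracket $[x,y]:=xy-yx$, and then the identity~\eqref{eq:Sminus}; the inverse semivector-space structure is inherited, so only the bracket-related conditions need attention. Axioms (1), (3), (4), (6) reduce to short computations via the inverse-semigroup calculus ($-e=e$ for $e\in\E(S)$, $0_{-a}=0_a$, commutativity of $+$), combined with right distributivity and Definition~\ref{def4}. For instance, (3) is $-(xy-yx)=yx-xy=[y,x]$; for (6) one uses that $fe\in\E(S)$ by Lemma~\ref{lemma_basicproperties}(2), so $-fe=fe$, whence $[e,f]=ef+fe=e+f$ by Lemma~\ref{lemma_basicproperties}(1); (1) follows from Definition~\ref{def4}(1) together with Lemma~\ref{p1f}(1); and (4) is obtained by expanding both sides and using Definition~\ref{def4}(4) on the left factor and right distributivity on the right. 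Axiom (2) is handled by expanding $[x,y+z]=x(y+z)-(yx+zx)$ via right distributivity and then applying Lemma~\ref{lemma_basicproperties}(4) to write $x(y+z)=xy+xz+0_{xy}+0_{xz}$, which yields $[x,y+z]=[x,y]+[x,z]+0_{xy}+0_{xz}\succeq[x,y]+[x,z]$.

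The core of the proof is the Jacobi inequality~(5). I will compute $[[x,y],z]$ explicitly: right distributivity gives $[x,y]z=xyz-yxz$, while Lemma~\ref{lemma_basicproperties}(4) yields $z[x,y]=zxy-zyx+0_{zxy}+0_{zyx}$. Negating (recall $-0_a=0_a$) and summing produces
\[[[x,y],z]=xyz-yxz+zyx-zxy+0_{zxy}+0_{zyx},\]
with analogous formulas obtained by cyclically permuting $(x,y,z)$. In the cyclic sum $J(x,y,z)$, the six non-idempotent monomials cancel pairwise, each cancellation $a+(-a)$ producing the idempotent $0_a$; combining these with the six idempotent correction terms and using $e+e=e$ for idempotents collapses the whole expression to
\[J(x,y,z)=0_{xyz}+0_{yxz}+0_{zyx}+0_{zxy}+0_{yzx}+0_{xzy}.\]

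The main obstacle is to bound this sum by $0_{x+y+z}=0_x+0_y+0_z$, and it is delicate precisely because $S$ is only right distributive, so one cannot symmetrically show $0_{abc}\preceq 0_a$. The key observation is that for any ordering $abc$ of $x,y,z$ one has the chain
\[0_{abc}\preceq 0_{ab}\cdot 0_c\preceq 0_c,\]
where the first inequality comes from Lemma~\ref{lemma_basicproperties}(3) and the second from the right-hand part of Definition~\ref{def4}(6). Reading off the rightmost factor in each of the six summands of $J$ gives $0_{xyz},0_{yxz}\preceq 0_z$, $0_{zyx},0_{yzx}\preceq 0_x$, and $0_{zxy},0_{xzy}\preceq 0_y$. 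Since addition is monotone in the natural order, $J(x,y,z)\preceq 2(0_x+0_y+0_z)=0_x+0_y+0_z=0_{x+y+z}$, proving~(5).

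For the identity~\eqref{eq:Sminus}, note first that $0_{[x,y]}=0_{xy-yx}=0_{xy}+0_{yx}$. Using right distributivity ($0_x y=0_{xy}$, $0_y x=0_{yx}$ via Lemma~\ref{lemma_distributive}(1)) and that $y\cdot0_x,\,x\cdot0_y\in\E(S)$ by Lemma~\ref{lemma_basicproperties}(2) (so their additive inverses equal themselves), I get
\[[0_x,y]+[x,0_y]=0_{xy}+y\cdot 0_x+x\cdot 0_y+0_{yx}.\]
Finally, Lemma~\ref{lemma_basicproperties}(3) gives $0_{yx}\preceq y\cdot 0_x$ and $0_{xy}\preceq x\cdot 0_y$, which is to say $y\cdot 0_x+0_{yx}=0_{yx}$ and $x\cdot 0_y+0_{xy}=0_{xy}$; the right-hand side above therefore collapses to $0_{xy}+0_{yx}=0_{[x,y]}$, completing the proof.
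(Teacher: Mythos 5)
Your treatment of axioms (1), (3), (4), (6) and of the identity \eqref{eq:Sminus} is correct and essentially coincides with the paper's. The gap lies in your use of Lemma~\ref{lemma_basicproperties}(4). That lemma computes the \emph{difference} $x\cdot(y+z)-(x\cdot y+x\cdot z)=0_{xy}+0_{xz}$; since $xy+xz\preceq x(y+z)$, what actually follows is $xy+xz=x(y+z)+0_{xy}+0_{xz}$, not your claimed $x(y+z)=xy+xz+0_{xy}+0_{xz}$ — the latter, after absorbing idempotents, asserts full left distributivity $x(y+z)=xy+xz$, which fails already in $\pen(V)$. In axiom (2) this is harmless, since the correct route is $[x,y]+[x,z]=(xy+xz)-(yx+zx)\preceq x(y+z)-(y+z)x=[x,y+z]$, using right distributivity and monotonicity of $+$. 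In axiom (5), however, it is fatal: your ``explicit formula'' $z[x,y]=zxy-zyx+0_{zxy}+0_{zyx}$ is false (it amounts to $z[x,y]=zxy-zyx$); the true statement is $zxy-zyx\preceq z[x,y]$, hence $-z[x,y]\preceq zyx-zxy$, and your computation only yields the \emph{lower} bound $J(x,y,z)\succeq 0_{xyz}+0_{yxz}+\cdots$, which cannot be combined with $0_{abc}\preceq 0_c$ to produce the required \emph{upper} bound $J(x,y,z)\preceq 0_{x+y+z}$.

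The repair is to avoid expanding the left products. After applying right distributivity and associativity one regroups
\[
J(x,y,z)=\bigl(x(yz)-x(zy)-x[y,z]\bigr)+\bigl(y(zx)-y(xz)-y[z,x]\bigr)+\bigl(z(xy)-z(yx)-z[x,y]\bigr),
\]
then uses $x(yz)-x(zy)\preceq x[y,z]$ (Definition~\ref{def4}(2)) and monotonicity to bound each group by $0_{x[y,z]}$, and finally $0_{x[y,z]}\preceq 0_x\,0_{[y,z]}\preceq 0_{[y,z]}\preceq 0_y+0_z$ via Lemma~\ref{lemma_basicproperties}(3), Definition~\ref{def4}(6) and \eqref{eq:InvAssocLieInequality}. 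Your observation that $0_{abc}\preceq 0_{ab}\cdot 0_c\preceq 0_c$ is a correct and rather neat shortcut, but it is applied to an expression that $J(x,y,z)$ does not in fact equal.
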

\begin{proof}
We will verify conditions (1)-(6) of Definition~\ref{def6} for every $x,y,z\in S$, $e,f\in \E(S)$ and $0\neq \alpha\in \mathbb{F}$.

(1) The equalities  $ \alpha [x,y]= [\alpha x,y]= [x,\alpha y]$  follow immediately from condition~(1) of Definition~\ref{def4}.

(2) By~(2) and~(3) of Definition~\ref{def4} we have
\begin{multline*}
    [x,y]+[x,z]=xy-y x + xz-zx=(x y +  xz)-(yx +z x)\preceq  x(y+z)-(y+z)x  = [x,y+z].
\end{multline*}

(3) We see that $[x,y]=xy-y x =-(yx-xy )=-[y,x].$

(4) By condition~(4) and (5) of Definition~\ref{def4}  we obtain 
  $$ [x,e+z]=x(e+z)-(e+z) x=x e+ xz -e x-zx=[x,e]+[x,z].$$
     
(5) We must to show that $J(x,y,z)\preceq 0_x+0_y+0_z$. Since $S$ is right distributive we see that 
\begin{multline*}
    J(x,y,z)=(xy) z-(yx)z-z(x y-yx)+(yz) x-(z y)x-x(yz-zy)+(zx) y-(xz) y-y(z x-x z)=(*).
\end{multline*}
 Keeping in mind that $S$ is associative and applying   (2) of Definition~\ref{def4},  (3) of Lemma~\ref{lemma_basicproperties}, (6) of Definition~\ref{def4} and then \eqref{eq:InvAssocLieInequality} we obtain  
\begin{align*}
(*)&=  x(y z) -x(z y) -x(yz-zy) + y (z x) -y(xz) -y(z x-x z) +z(x y) -z (yx) -z(x y-yx)\\
&\preceq x[y,z] -x[y,z] + y [z, x]  -y[z, x] +z[x, y]  -z[x, y]  = 0_{x[y,z]}+0_{y[z,x]}+0_{z[x,y]}\\ & \preceq 0_{x} 0_{[y,z]}+0_{y}0_{[z,x]}+0_{z} 0_{[x,y]} \preceq  0_{[y,z]}+ 0_{[z,x]}+  0_{[x,y]} \preceq 0_x+0_y+0_z=0_{x+y+z}.
\end{align*}

(6)  By (2) and (1)  of Lemma~\ref{lemma_basicproperties}, we obtain
$$[e,f] = ef-fe = ef+fe = e+f, $$
for all $e,f \in \E(S).$

For the last assertion, note that    (3) of  Lemma~\ref{lemma_basicproperties} and (1) of  Lemma~\ref{lemma_distributive}   imply 
$$0_{xy} = 0_{xy} + x 0_y =  0_{x}y + x 0_y $$ and similarly,  $0_{yx} = 0_y x + y 0_x.$ Consequently, using  (2) of  Lemma~\ref{lemma_basicproperties}, we conclude that
\begin{align*} 
0_{[x,y]}&=0_{xy - yx}=   0_{xy}+0_{yx}=  0_{x}y + x 0_y +  0_y x + y 0_x\\
& = 0_{x}y - y 0_x + x 0_y -  0_y x  =
[0_x,y]+[x,0_y],
\end{align*} 
as required.
\end{proof}

The Lie inverse semialgebra of the Proposition~\ref{p19} will be denoted by $S^-$. As an immediate consequence of Propositions~\ref{p19} and~\ref{p20}, we have:

\begin{cor}
Let $V$ be a vector space. Then the set $\pen(V)^-$ of all partial endomorphism of $V$ is a Lie inverse semialgebra.
\end{cor}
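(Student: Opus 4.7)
The plan is essentially to chain together the two preceding propositions, since the corollary is stated as an ``immediate consequence'' of them. First, I would invoke Proposition~\ref{p20}, which establishes that $\pen(V)$, equipped with the pointwise sum, partial composition, and pointwise scalar multiplication (each defined on the maximal subspace of $V$ where they make sense), is a right distributive associative inverse semialgebra over $\F$. This is precisely the hypothesis required to apply Proposition~\ref{p19}.

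Then I would apply Proposition~\ref{p19} with $S=\pen(V)$: the proposition guarantees that the inverse semivector space underlying $\pen(V)$, endowed with the bracket
\[
[\phi_1,\phi_2] := \phi_1\phi_2 - \phi_2\phi_1,
\]
is a Lie inverse semialgebra. By definition of the notation $S^-$ introduced just after Proposition~\ref{p19}, this Lie inverse semialgebra is exactly $\pen(V)^-$. No further verification is needed, since all six axioms of Definition~\ref{def6} (as well as the additional identity~\eqref{eq:Sminus}) have already been checked in full generality in the proof of Proposition~\ref{p19}.

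There is no real obstacle here: the corollary's proof amounts to a one-line citation of the two propositions. The only thing worth being careful about is making explicit that the Lie bracket on $\pen(V)^-$ is defined via the partial composition $\phi_1\phi_2$, whose domain is $\phi_2^{-1}(K_1)$ (so the bracket $[\phi_1,\phi_2]$ is defined on $\phi_2^{-1}(K_1)\cap \phi_1^{-1}(K_2)$), to reassure the reader that the construction of $S^-$ specializes sensibly to the concrete example of partial endomorphisms.
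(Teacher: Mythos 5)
Your proposal is correct and matches the paper exactly: the corollary is stated there as an immediate consequence of Propositions~\ref{p20} and~\ref{p19}, which is precisely the chain of citations you give. The extra remark about the domain of the bracket is a harmless clarification and does not change the argument.
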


\begin{defn}
Let $A$ be a non-associative algebra. A {\em partial derivation } of $A$ is a linear map $\phi : K \rightarrow A$, where $K$ is a subalgebra of $A$ and 
$\phi (x y)=\phi(x)y+x\phi(y)$
 for every $x, y \in K$. The set of all partial derivations of $A$ will be denoted by $\pde(A)$.
\end{defn}
\begin{prop}
Let $A$ be a non-associative algebra. Then, $\pde(A)$ is a  Lie inverse subsemialgebra of $\pen(A)^-$.
\end{prop}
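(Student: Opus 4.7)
The plan is to verify that $\pde(A)$ is closed under all three operations needed for a Lie inverse subsemialgebra of $\pen(A)^-$: addition, scalar multiplication, and the commutator bracket. Since $\pen(A)^-$ is a Lie inverse semialgebra by Propositions~\ref{p19} and~\ref{p20}, and partial derivations are in particular partial linear endomorphisms, it only remains to check closure.

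For addition and scalar multiplication, the arguments are routine. Given $\phi_i\colon K_i\to A$ in $\pde(A)$ ($i=1,2$), the sum $\phi_1+\phi_2$ is defined on $K_1\cap K_2$, which is a subalgebra of $A$ since $K_1$ and $K_2$ are. The Leibniz identity for $\phi_1+\phi_2$ on $K_1\cap K_2$ then follows by adding the Leibniz identities for $\phi_1$ and $\phi_2$. Similarly, $\alpha\phi_1\colon K_1\to A$ clearly satisfies the Leibniz rule.

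The main step, and the only substantive one, is to check closure under the bracket $[\phi_1,\phi_2]=\phi_1\phi_2-\phi_2\phi_1$. Its domain is $D:=\phi_2^{-1}(K_1)\cap\phi_1^{-1}(K_2)$, and I will first check that $D$ is a subalgebra. Given $x,y\in D$, note $x,y\in K_1\cap K_2$, so $xy\in K_1\cap K_2$. Applying the Leibniz rule for $\phi_2$ gives $\phi_2(xy)=\phi_2(x)y+x\phi_2(y)$; since $\phi_2(x),\phi_2(y)\in K_1$ and $x,y\in K_1$, and $K_1$ is a subalgebra, we get $\phi_2(xy)\in K_1$, i.e. $xy\in\phi_2^{-1}(K_1)$. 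The symmetric argument shows $xy\in\phi_1^{-1}(K_2)$, so $xy\in D$.

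Finally I will verify the Leibniz rule for $[\phi_1,\phi_2]$ on $D$. For $x,y\in D$, expand
\begin{align*}
[\phi_1,\phi_2](xy)&=\phi_1(\phi_2(x)y+x\phi_2(y))-\phi_2(\phi_1(x)y+x\phi_1(y))\\
&=\phi_1\phi_2(x)\,y+\phi_2(x)\phi_1(y)+\phi_1(x)\phi_2(y)+x\,\phi_1\phi_2(y)\\
&\quad-\phi_2\phi_1(x)\,y-\phi_1(x)\phi_2(y)-\phi_2(x)\phi_1(y)-x\,\phi_2\phi_1(y),
\end{align*}
where at each step the Leibniz rule is applicable because all the arguments lie in the appropriate $K_i$ (as checked in the previous paragraph). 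The four mixed terms cancel pairwise, leaving $[\phi_1,\phi_2](x)\,y+x\,[\phi_1,\phi_2](y)$, as desired. The only delicate point throughout is bookkeeping of domains, ensuring every application of a partial derivation is legitimate; this is precisely what the subalgebra verification for $D$ guarantees.
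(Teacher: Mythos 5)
Your proof is correct and follows essentially the same route as the paper: identify the domains, note closure under sum and scalar multiplication, and verify the Leibniz identity for the bracket by expanding $\phi_1\phi_2(xy)$ and $\phi_2\phi_1(xy)$ and cancelling the mixed terms. You are in fact slightly more thorough than the paper, which merely asserts that $\dom([\phi_1,\phi_2])=\phi_1^{-1}(K_2)\cap\phi_2^{-1}(K_1)$ is a subalgebra, whereas you actually verify it via the Leibniz rule.
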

\begin{proof}
Given $\alpha \in \F$ and $\phi_i: K_i\to A\in \pde(A)$, $i=1,2$, we observe that
$\dom(\alpha \phi_1) = K_1,$  $\dom(\phi_1+\phi_2)=K_1\cap K_2$ and $\dom([\phi_1,\phi_2])=\phi_1^{-1}(K_2)\cap \phi_2^{-1}(K_1) \subseteq K_1 \cap K_2$ are subalgebras of $A$.
It is immediate to check that  $\alpha \phi_1 \in \pde(A)$ and $ \phi_1+\phi_2\in \pde(A),$ so that 
$\pde(A)$ is an inverse semivector subspace of $\pen(A).$ It remains  to show that $\pde(A)$ is closed under  the Lie bracket of $\pen(A)^-$, i.e. 
 $[\phi_1,\phi_2]=\phi_1\phi_2-\phi_2\phi_1\in \pde(A).$  For note that for all $x,y\in \phi_1^{-1}(K_2)\cap \phi_2^{-1}(K_1)$ we have 
$$\phi_1\phi_2(xy)=\phi_1 \big{(} \phi_2(x) y+ x\phi_2(y)\big{)}= (\phi_1\phi_2(x))y+\phi_2(x)\phi_1(y)+\phi_1(x)\phi_2(y)+x(\phi_1\phi_2(y)),$$
and interchanging $\phi_1$ and  $\phi_2$
$$\phi_2\phi_1(xy)=\phi_2 \big{(} \phi_1(x) y+ x\phi_1(y)\big{)}= (\phi_2\phi_1(x))y+\phi_1(x)\phi_2(y)+\phi_2(x)\phi_1(y)+x(\phi_2\phi_1(y)).$$
Thus $[\phi_1,\phi_2](xy)=[\phi_1,\phi_2](x)y+x[\phi_1\phi_2](y)$, as required.
\end{proof}

We present another pivotal example of a Lie inverse semialgebra, which will be frequently used in the subsequent sections.  It  is inspired by the inverse semigroup $S(G)$  introduced by R. Exel  in  \cite{Exel1998} to govern partial  representations of a group $G$ and partial actions of $G$ on sets.  It is known that $S(G)$ is isomorphic to the Szendrei expansion of $G,$ which is isomorphic to  the Birget–Rhodes prefix expansion of $G$ (see \cite{KL}, \cite{Szendrei}).

Let $L$ be a Lie algebra.  Let us consider the set 
    $$E(L):=\{(A,a) \mid A ~~\text{is a finite-dimensional subspace of }~~L,~~ a\in A\}.$$
In $E(L)$, we define the sum and the Lie bracket of two elements $(A,a)$ and $(B,b)$ as follows: 
$$(A,a)+(B,b)=(A+B,a+b) \quad \mbox{ and } \quad [(A,a), (B,b)]=(A+B+\mathbb{F}[a,b],[a,b]).$$
Additionally, for $\alpha\in \F$ we define:
$$\alpha (A,a)=(A,\alpha a).$$ 
It is straightforward to verify that $(E(L),+)$ is an inverse semivector  space with zero element  $(0,0)$, in which the (additive) inverse of $(A,a)$
is  $(A,-a).$  Moreover, for any $(A,a)$ we have that $0_{(A,a)}=(A,0)$, and so $$\E(E(L))=\{(A,0)\mid A \mbox{ is a finite-dimensional subspace of } L \}.$$ Consequently, $ (A,a)\preceq (B,b)$ holds if and only if 
 \begin{align*}
 (A,a)+ (A,-a)+(B,b)=(A,a)  \Longleftrightarrow (A+ B,b)=(A,a) \Longleftrightarrow a=b ~~\text{and}~~ B\subseteq A.
 \end{align*}

  \begin{prop}\label{p: E(L)}
     $E(L)$ endowed with the above operations is a Lie inverse semialgebra,  such that
     \begin{equation}\label{eq:E(L)}
[0_x, y]=[x,0_y]=0_{x+y},  
\end{equation}
for all $x,y \in E(L).$
  \end{prop}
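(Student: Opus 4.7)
My plan is to carry out a direct verification, writing generic elements as $x=(A,a)$, $y=(B,b)$, $z=(C,c)$ with $A,B,C$ finite-dimensional subspaces of $L$, and then checking conditions (1)--(6) of Definition~\ref{def6} against the explicit formulas given for $+$, $\alpha\cdot$, $[\cdot,\cdot]$, and recalling that the order is $(A,a)\preceq(B,b)\iff a=b$ and $B\subseteq A$, while $0_{(A,a)}=(A,0)$.

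The first thing I would do is prove the identity \eqref{eq:E(L)}, because it is the cleanest and also clarifies behavior at idempotents. Plugging $0_x=(A,0)$ into the bracket gives $[0_x,y]=(A+B+\F[0,b],[0,b])=(A+B,0)=0_{x+y}$, and symmetrically $[x,0_y]=0_{x+y}$. With this in hand, condition (6) reduces to $[(A,0),(B,0)]=(A+B,0)=(A,0)+(B,0)$, which is immediate, and condition (4) follows similarly: if $e=(C,0)$ then $[x,e+z]=(A+C+C'+\F[a,c'],[a,c'])$, while $[x,e]+[x,z]=(A+C,0)+(A+C'+\F[a,c'],[a,c'])$ gives the same pair. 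Condition (1) uses that $\F\alpha[a,b]=\F[a,b]$ for $\alpha\neq 0$, and condition (3) just uses $[b,a]=-[a,b]$ together with $\F[b,a]=\F[a,b]$.

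The only two conditions that are not pure equalities are (2) and (5), and both reduce to containments of first coordinates (with identical second coordinates), so the key observation is that $\preceq$ in $E(L)$ is governed by reverse inclusion of subspaces. For (2), the second coordinates $[a,b+c]$ and $[a,b]+[a,c]$ agree by bilinearity in $L$, and the required containment $A+B+C+\F[a,b+c]\subseteq A+B+C+\F[a,b]+\F[a,c]$ follows because $\F[a,b+c]=\F([a,b]+[a,c])$ already sits inside the right-hand side.

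The main step, and the one needing care, is (5): I would expand $J(x,y,z)=[[x,y],z]+[[y,z],x]+[[z,x],y]$ into explicit pairs. The second coordinates add up to $[[a,b],c]+[[b,c],a]+[[c,a],b]$, which vanishes by the ordinary Jacobi identity in $L$, so $J(x,y,z)$ is an idempotent with second coordinate $0$, and its first coordinate is a sum containing $A+B+C$. Since $0_{x+y+z}=(A+B+C,0)$, the required relation $J(x,y,z)\preceq 0_{x+y+z}$ amounts to the trivial inclusion $A+B+C\subseteq A+B+C+\F[a,b]+\F[b,c]+\F[c,a]+\F[[a,b],c]+\F[[b,c],a]+\F[[c,a],b]$. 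Thus the classical Jacobi identity for $L$ is exactly the ingredient that makes (5) hold, and once that vanishing is noted the rest is bookkeeping on finite-dimensional subspaces.
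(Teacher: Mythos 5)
Your proposal is correct and follows essentially the same route as the paper's proof: a direct check of conditions (1)--(6) of Definition~\ref{def6} on explicit pairs, using that $\preceq$ in $E(L)$ is reverse inclusion of first coordinates and that the classical Jacobi identity in $L$ kills the second coordinate of $J(x,y,z)$. Proving \eqref{eq:E(L)} first rather than last is a harmless reordering.
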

  \begin{proof}
 We verify conditions~(1)-(6) of Definition~\ref{def6} for all $(A,a), (B,b), (C,c)\in E(L)$ and $0\neq\alpha\in \F $.
 
      (1) We have
$$ \alpha [(A,a), (B,b)]=\alpha  (A+B+\mathbb{F}[a,b],[a,b])= (A+B+\mathbb{F}[a,b],\alpha [a,b]).$$
On the other hand  $$[\alpha (A,a), (B,b)]=[ (A,\alpha a), (B,b)]=  (A+B+\mathbb{F}[\alpha a,b],[\alpha a,b])= (A+B+\mathbb{F}[a,b],\alpha [a,b]).$$
Thus, we have shown that $\alpha  [ (A,a), (B,b)]=[\alpha (A,a), (B,b)].$ Similarly, we can check the equality $\alpha  [ (A,a), (B,b)]=[ (A,a), \alpha (B,b)]$.

(2) We see  that 
$$[(A,a), (B,b)+(C,c)]=  [(A,a), (B+C,b+c)]= (A+B+C+\mathbb{F}[a,b+c],[a,b+c]).$$
On the other hand
\begin{align*}
[(A,a), (B,b)]+ [(A,a),(C,c)] &=  (A+B+\mathbb{F}[a,b],[a,b])+(A+C+\mathbb{F}[a,c],[a,c])\\
&=(A+B+C+\mathbb{F}[a,b]+\mathbb{F}[a,c],[a,b+c]).
\end{align*}
As $ \mathbb{F}[ a,b+c] \subseteq \mathbb{F}[a,b]+\mathbb{F}[a,c]$, we obtain $ [(A,a), (B,b)+(C,c)] \succeq [(A,a), (B,b)]+ [(A,a),(C,c)].$

(3)  This is straightforward from the definition of the Lie bracket in $E(L).$

(4) We compute  that 
\begin{align*}
       [(A,a),(B,0)+(C,c)]&=[(A,a),(B+C,c)]= (A+B+C+\mathbb{F}[a,c],[a,c])\\
       &=  (A+B,0)+ (A+C+\mathbb{F}[a,c],[a,c])\\
       &= [(A,a),(B,0)]+[(A,a),(C,c)],
\end{align*} as desired.

(5) Since  $$[(A,a), (B,b),(C,c)]=[(A+B+\mathbb{F}[a,b],[a,b]),(C,c)]=(A+B+C+\mathbb{F}[a,b]+\mathbb{F}[a,b,c],[a,b,c]),$$ we obtain 
\begin{align*}
    J((A,a), (B,b),(C,c))&=(A+B+C+\mathbb{F}[a,b]+\mathbb{F}[b,c]+\mathbb{F}[c,a]+\mathbb{F}[a,b,c]+\mathbb{F}[b,c,a]+\mathbb{F}[c,a,b],0)\\
    & \preceq (A+B+C,0)=0_{(A,a)+(B,b)+(C,c)}.
\end{align*}

(6) This immediately follows  from the definition of the Lie bracket in $E(L)$.

 Finally,  for any $(A,a), (B,b)\in E(L)$ we have that
$$[0_{(A,a)}, (B,b) ] = [(A,0), (B,b) ]=
(A+B,0) =  [(A,a), 0_{(B,b)} ],$$ showing \eqref{eq:E(L)}.  
 \end{proof}

\begin{ex}\label{exHeisenberg}
Let $L$ be the Heisenberg 3-dimensional Lie algebra with basis  $\{a,b,c\}$,  where $[a,b]=c$ and $[a,c]=[b,c]=0$. Let $A= \F a$ and $B= \F b$. Then, 
$$0_{[(A,a),(B,b)]}=(L,0)\neq (A+B,0) =  [0_{(A,a)},(B,b)]+[(A,a),0_{(B,b)}].$$ 
Thus, $x=(A,a), y= (B,b), z= (C,c)$  do not satisfy \eqref{eq:Sminus} and by Proposition~\ref{p19}, $E(L)$ cannot be embedded in $S^-$ for a right distributive associative inverse semialgebra $S$.  Note also that 
$$0_{[x,y]} = (L,0)\neq (A+B,0) =  0_{x+y}. $$ 
\end{ex}

In the next section, we will see that certain types of Lie inverse semialgebras can be embedded into some $S^-$
for an appropriate distributive associative inverse semialgebra.

\section{Semilattices of algebras}\label{s: semillatices}
Let $\Lambda$  be a poset. If $\lambda$ and $\mu$ are elements of $\Lambda$, then their  {\em meet} is an element $\lambda\wedge \mu$ such that
\begin{enumerate}
    \item $\lambda\leq \lambda\wedge\mu$ and $\mu\leq \lambda\wedge\mu$, 
    \item if $\nu\leq \lambda$ and $\nu\leq \mu$, then $\nu\leq \lambda\wedge\mu.$
\end{enumerate}
A poset $\Lambda$ is called a {\em meet semilattice} if every finite subset of $\Lambda$ has a meet. Equivalently, a meet semilattice is a commutative semigroup $(\Lambda,\wedge)$ in which every element is idempotent.  For instance, the set of idempotents of an inverse semigroup  is a meet semilattice.  In particular, $\E (V)$ is a meet semillatice, where $V$ is an inverse semivector spaces.

A meet semilattice $\Lambda$ can be also viewed as a category, where the objects are the elements of $\Lambda$ and the arrows are given by the ``less than or equal to'' relations.  A presheaf on $\Lambda$ with values in the category $\mathcal{C}$ is a contravariant functor $F: \Lambda\to \mathcal{C}$.
The main objective of this section is to show that the presheaf on a meet semilattice, with values in the category of associative (or Lie) algebras, corresponds to a certain subclass of associative (or Lie) inverse semialgebras.

\begin{defn}\label{def:SemilatNonAssocAlg} A {\em semilattice of non-associative algebras} over  the field $\F$ is a  non-associative inverse semialgebra $S$ over $\F$ such that 
\begin{equation}\label{eq:semilatticeAlg}
    0_{xy}=0_{x+y}
\end{equation} for any  $x,y \in S.$ If, in addition, $S$ is  associative (or Lie) then  $S$ will be called a {\em semilattice of associative (or Lie) algebras}.
\end{defn}

Condition \eqref{eq:semilatticeAlg} implies distributivity:

\begin{lem}\label{lemma1_SemilatticeAlg}
If $S$ is a semilattice of non-associative algebras over
$\F$ then $S$ is distributive. 
 \end{lem}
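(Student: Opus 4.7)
The plan is to upgrade the one-sided inequality from Definition~\ref{def4}(2) to a genuine equality by matching the additive idempotents of the two sides. The workhorse is the following elementary observation in any commutative inverse semigroup: if $b\preceq a$ and $0_a=0_b$, then
$$b \;=\; a+0_b \;=\; a+0_a \;=\; a,$$
where the last step uses the identity $a+0_a=a$ (which is just the inverse-semigroup axiom $a+(-a)+a=a$ rewritten). Thus, once I fix $x,y,z\in S$, it suffices to prove $0_{x\cdot(y+z)}=0_{x\cdot y+x\cdot z}$; combined with $x\cdot y+x\cdot z\preceq x\cdot(y+z)$ from Definition~\ref{def4}(2), this forces left distributivity.

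Matching the two idempotents is a short calculation combining the hypothesis $0_{uv}=0_{u+v}$ with the general facts $0_{u+v}=0_u+0_v$ and $e+e=e$ for every idempotent $e$:
\begin{align*}
0_{x\cdot(y+z)} &= 0_{x+y+z} \;=\; 0_x+0_y+0_z,\\
0_{x\cdot y+x\cdot z} &= 0_{x\cdot y}+0_{x\cdot z} \;=\; 0_{x+y}+0_{x+z} \;=\; 0_x+0_y+0_z.
\end{align*}
These coincide, and by the previous paragraph we conclude $x\cdot(y+z)=x\cdot y+x\cdot z$. Right distributivity follows by the completely symmetric argument, using Definition~\ref{def4}(3) in place of (2).

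There is no serious obstacle in this proof; the whole argument is inverse-semigroup bookkeeping once one spots the right strategy. The only tempting pitfall would be to try to \emph{cancel} $x\cdot y+x\cdot z$ in the identity $x\cdot(y+z)-(x\cdot y+x\cdot z)=0_{x\cdot y}+0_{x\cdot z}$ from Lemma~\ref{lemma_basicproperties}(4) by adding $x\cdot y+x\cdot z$ back on both sides; this fails because $a-b+b\neq a$ in general in a commutative inverse semigroup. Working at the level of the idempotents $0_{\bullet}$ sidesteps this issue cleanly.
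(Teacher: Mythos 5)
Your proof is correct and is essentially the paper's own argument: the paper likewise starts from $x\cdot y+x\cdot z\preceq x\cdot(y+z)$, writes $x\cdot y+x\cdot z=0_{x\cdot y}+0_{x\cdot z}+x\cdot(y+z)$, and uses \eqref{eq:semilatticeAlg} to identify $0_{x\cdot y}+0_{x\cdot z}$ with $0_{x\cdot(y+z)}$, which is exactly your idempotent-matching step presented as one chain of equalities.
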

 \begin{proof}
  Let $x,y,z\in S.$ Since $xy+xz\preceq x(y+z)$,  we have, using \eqref{eq:semilatticeAlg}, that $$xy+xz= 0_{xy}+0_{xz}+x(y+z)=0_{x+y}+ 0_{x+z}+x(y+z)= 0_{x(y+z)}+ x(y+z)=x(y+z).$$ 
Analogously we check that $(y+z)x=yx+zx.$
\end{proof}

The behavior of the (additive) idempotents also becomes better. Indeed,  Lemma~\ref{lemma1_SemilatticeAlg}, Lemma~\ref{lemma_distributive} and \eqref{eq:semilatticeAlg} directly imply:

\begin{lem}\label{lemma2_SemilatticeAlg} 
Let $S$ be a semilattice of non-associative algebras over
$\F .$ Then 
$$ x 0_y = 0_x y = 0_x 0_y = 0_{xy} = 0_x + 0_y$$  
for all $ x,y \in S.$ In particular, 
$$ef=e+f = fe $$ 
for all $\, e,f \in \E (S).$
     \end{lem}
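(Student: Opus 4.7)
The plan is to chain together the three ingredients the authors point to: distributivity (from Lemma~\ref{lemma1_SemilatticeAlg}), the idempotent-absorption identities for distributive non-associative inverse semialgebras (Lemma~\ref{lemma_distributive}), and the defining condition \eqref{eq:semilatticeAlg} of a semilattice of non-associative algebras. No inequalities need to be sharpened and no extra structural facts seem to be required, so the argument should be a short direct computation.

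First I would invoke Lemma~\ref{lemma1_SemilatticeAlg} to conclude that $S$ is distributive, which unlocks all three parts of Lemma~\ref{lemma_distributive}. These immediately give the three equalities
\[
0_x \cdot y \;=\; 0_{x\cdot y}, \qquad x \cdot 0_y \;=\; 0_{x\cdot y}, \qquad 0_x \cdot 0_y \;=\; 0_{x\cdot y},
\]
for all $x,y\in S$. Combining them yields the first three equalities of the claimed chain $x 0_y = 0_x y = 0_x 0_y = 0_{xy}$. Then I would apply the standing hypothesis \eqref{eq:semilatticeAlg}, which says $0_{xy} = 0_{x+y}$, and finish by using the general identity $0_{x+y} = 0_x + 0_y$ recorded at the start of Section~\ref{s: preliminaries} for any commutative inverse semigroup. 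This closes the chain with $0_{xy} = 0_x + 0_y$.

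For the ``in particular'' assertion, I would specialize to $x=e, y=f\in \E(S)$. Since every idempotent satisfies $e = 0_e$ and $f = 0_f$, the already-established identity $0_x 0_y = 0_x + 0_y$ immediately gives $ef = e + f$. The symmetric equality $fe = e+f$ then follows either by interchanging the roles of $x$ and $y$ in the same computation or, equivalently, by the commutativity of the additive operation, yielding $fe = 0_f + 0_e = e + f$.

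I do not anticipate any real obstacle here: the statement is essentially a repackaging of Lemma~\ref{lemma_distributive} under the extra hypothesis \eqref{eq:semilatticeAlg}. The only minor point to be careful about is not accidentally using left distributivity before it has been established; but since Lemma~\ref{lemma1_SemilatticeAlg} already delivers full (two-sided) distributivity in a semilattice of non-associative algebras, both halves of Lemma~\ref{lemma_distributive} are available, and the proof reduces to a two-line chain of equalities.
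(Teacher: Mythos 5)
Your proposal is correct and follows exactly the route the paper intends: the paper gives no written proof, stating only that the lemma follows directly from Lemma~\ref{lemma1_SemilatticeAlg}, Lemma~\ref{lemma_distributive} and \eqref{eq:semilatticeAlg}, which is precisely the chain of equalities you spell out. Your handling of the ``in particular'' part via $e=0_e$, $f=0_f$ and commutativity of addition is also the evident intended argument.
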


To be aligned with our notation  used in the case of Lie inverse semialgebras, we use brackets for the product in semilattices of Lie algebras.  In this case we have the next:
\begin{lem}\label{lemma_SemilatticeLieAlg} Let $S$ be a semilattice of Lie algebras over  $\F .$ Then for all $x, y , z\in S$
    $$  J(x,y,z) = 0_{x+y+z}. $$
    Moreover,  if {\rm char} $ \F \neq 2$, then 
    $$[x,x] = 0_x .$$
\end{lem}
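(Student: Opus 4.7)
The plan is to observe that axiom (5) of Definition~\ref{def6} already confines $J(x,y,z)$ below the idempotent $0_{x+y+z}$, so $J(x,y,z)$ is itself forced to be an idempotent; then the semilattice identity \eqref{eq:semilatticeAlg} pins down which idempotent it must be.

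First I would record the elementary fact that any element $a$ with $a \preceq e$ for some $e\in\E(S)$ is itself idempotent: writing $a = e + 0_a$, one has $a+a = e+e+0_a+0_a = e+0_a = a$. Applying this to the inequality $J(x,y,z)\preceq 0_{x+y+z}$ supplied by condition~(5) of Definition~\ref{def6}, I conclude that $J(x,y,z)\in\E(S)$. Since the (unique) additive inverse of any idempotent $e$ in a commutative inverse semigroup is $e$ itself, this yields $0_{J(x,y,z)} = J(x,y,z)$.

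Next I would compute $0_{J(x,y,z)}$ by distributing the ``$0_{-}$'' operation over the defining sum. Using $0_{a+b}=0_a+0_b$ together with Lemma~\ref{lemma2_SemilatticeAlg}, each summand satisfies
\[
0_{[x,[y,z]]} \;=\; 0_x + 0_{[y,z]} \;=\; 0_x+0_y+0_z \;=\; 0_{x+y+z},
\]
and the analogous identity holds for $0_{[y,[z,x]]}$ and $0_{[z,[x,y]]}$. Adding the three equal idempotents gives $0_{J(x,y,z)} = 0_{x+y+z}$, and combining this with $J(x,y,z)=0_{J(x,y,z)}$ yields the first claim.

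For the second identity, when $\operatorname{char}\F\neq 2$, Lemma~\ref{lemma-basicprop}(4) gives $[x,x]\preceq 0_x$, so $[x,x]$ is idempotent and equals $0_{[x,x]}$. The semilattice hypothesis \eqref{eq:semilatticeAlg} then gives $0_{[x,x]} = 0_{x+x}$, and $0_{x+x} = 0_x + 0_x = 0_x$ since $0_x\in\E(S)$. The only conceptual step in the whole argument is recognizing that axiom~(5) together with ``below an idempotent is idempotent'' reduces everything to computing $0_{J(x,y,z)}$; the rest is routine manipulation of the additive inverse-semigroup structure and \eqref{eq:semilatticeAlg}.
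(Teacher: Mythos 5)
Your proposal is correct and follows essentially the same route as the paper: both arguments combine axiom~(5) of Definition~\ref{def6} with the semilattice identity $0_{[x,y]}=0_{x+y}$ to identify $J(x,y,z)$ with $0_{J(x,y,z)}=0_{x+y+z}$, and likewise for $[x,x]$. Your explicit remark that any element below an idempotent is itself idempotent is just a slightly more transparent packaging of the paper's absorption step $J = J + 0_{x+y+z} = 0_{J}+0_{x+y+z}$.
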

\begin{proof} In the Lie case, \eqref{eq:semilatticeAlg} takes the form $$0_{[x,y]} = 0_{x + y},$$ which readily implies that 
    $$0_{[x,y,z]} =  0_{x} +0_{[y,z]} =0_{x} +0_y + 0_z $$ for all $x,y, z\in S.$ Then, since 
    $J(x,y,z)\preceq 0_{x+y+z},$ we obtain
$$J(x,y,z) = J(x,y,z) + 0_{x+y+z}=
0_{[x,y,z]} + 0_{[y,z,x]}+0_{[z,x,y]} + 0_{x+y+z}
=0_{x} +0_y + 0_z  + 0_{x+y+z} = 0_{x+y+z},$$
proving   $J(x,y,z) = 0_{x+y+z}.$ 

Suppose that  {\rm char} $ \F \neq 2 .$ Then by  (4) of Lemma~\ref{lemma-basicprop}, $[x,x] \preceq 0_x $ and, consequently,  
$$[x,x] = 0_{[x,x]} + 0_x  = 0_x,$$ as desired.
\end{proof}

\begin{rem}\label{rem:AssocToLieSamillatAlg} Let  $S$ be a semilattice of associative algebras. Then 
 \eqref{eq:semilatticeAlg} readilty implies that  
 $0_{[x,y]} = 0_{x + y},$ so by 
 Lemma~\ref{lemma1_SemilatticeAlg} and  
 Proposition~\ref{p19}, we obtain that  $S^-$ is a semilattice of Lie algebras. 
\end{rem}

Evidently, the collection of semilattices of associative (or Lie) algebras constitutes a subcategory within the category of associative (or Lie) inverse semialgebras. 


\begin{ex}\label{example:Sheaves}
 Let $\mathcal F$ be a presheaf of associative $\mathbb{F}$-algebras over a topological space $(X, \tau).$  Given $U,V \in \tau$, with $V \subseteq U$ denote by  
${\textrm res}^U _{V}: \mathcal F (U) \to \mathcal F  (V)$ the restriction homomorphism. Take the  disjoint union
$$S_{\mathcal F} =    \dot{\bigcup}  _{U \in \tau} \mathcal F (U) $$ and define  the  following operations on $S_{\mathcal F}:$ for $U,V \in \tau ,$  $x\in \mathcal F (U), y \in \mathcal F (V)$ and  $\alpha \in \mathbb{F} $ put
$$\alpha \cdot x :=  
 \alpha  x \in {\mathcal F}(U),$$
 $$x+y :=  {\textrm res}^U _{U\cap V}(x) + 
 {\textrm res}^V _{U\cap V} (y) \in {\mathcal F}(U\cap V),$$
 $$x \cdot y :=  {\textrm res}^U _{U\cap V}(x) \cdot 
 {\textrm res}^V _{U\cap V} (y)\in {\mathcal F}(U\cap V).$$ Then it is easy to see that  $S_{\mathcal F} $  is a semilattice of associative $\mathbb{F}$-algebras with the above defined operations.
    The following are  particular cases of this construction: 
\begin{enumerate}
\item $S_{\mathcal F}$ is the  set  of all sections of the structural sheaf ${\mathcal F}$ of a scheme $X$ over  $\mathbb{F}$ .\\

\item  $S_{\mathcal F}$ is the  set  of all regular functions $U \to \mathbb{F},$ where $U$ runs over all open subsets of an algebraic variety $X$ over   $\mathbb{F};$ ${\mathcal F}$ is the sheaf of regular functions whose restriction maps are the usual restrictions of functions. \\

\item 
$S_{\mathcal F}= {\mathcal F}_{par}(X,\R )$ is the set 
 of all continuous functions $f:U\to \R,$ where $U$ runs over all open subsets of a topological space $X.$ Such function form a sheaf on $X$ whose restriction maps are the usual restrictions of functions. 
 Replacing $\R$ by $\C$ we obtain a similar semilattice ${\mathcal F}_{par}(X,\C )$ of associative algebras over $\C .$ In particular, if $X$ is discrete, then ${\mathcal F}_{par}(X,\R )$ (or  ${\mathcal F}_{par}(X,\C ))$ consists of all partial functions (functions defined on subsets of $X$) with values in $\R$ (or $\C$). \\
 

\item Let $X$ be a smooth manifold. Then the set of all smooth functions $U \to \R,$ where $U$ runs over the open subsets of $X$ (considered as embedded open submanifolds) form  a semilattice of associative algebras over $\R$ with the operations defined using the corresponding sheaf of functions.\\

\item Let $X$ be a complex manifold. Then the set of all holomorphic  functions $U \to \C,$ where $U$ runs over the open subsets of $X$ form  a semilattice of associative algebras over $\C$ with the operations defined using the corresponding sheaf of functions.\\

\item Let $X$ be a topological space. For any open subset $U$ of $X$ let ${\mathcal F} (U) $ be the $C^*$-algebra   $C_b(U )$ of the   bounded continuous functions $U\to \C$ and ${\mathcal F} $ be the corresponding sheaf with the usual restriction of functions. Then $S_{\mathcal F}$ is  a semilattice of associative algebras over $\C$  (this is an example of what could be called a semilattice of $C^*$-algebras).\\

\end{enumerate}

\end{ex}

\begin{ex}\label{example:SheavesLie}
 Let $\mathcal F$ be a presheaf of associative $\mathbb{F}$-algebras over a topological space $(X, \tau)$ and
 $S_{\mathcal F}$ be the semilattice of associative algebras from  Example~\ref{example:Sheaves}.  Then  by Remark~\ref{rem:AssocToLieSamillatAlg}, $S_{\mathcal F}^{-}$ is  a semilattice of Lie algebras. Clearly,
$$S_{\mathcal F}^- =    \dot{\bigcup}  _{U \in \tau} \mathcal F (U)^-, $$ where each ${\mathcal F} (U)^-$ is the Lie algebra obtained by endowing ${\mathcal F} (U)$ with the Lie bracket
$[x,y]= xy - yx.$ In particular, items (1)-(7) from Example~\ref{example:Sheaves} produce semilattices of Lie algebras.
\end{ex}

\begin{ex}
If $L$ is an abelian Lie algebra, then it is clear that $E(L)$ is a semilattice of Lie algebras. Similarly, if $L$ is a 2-dimensional solvable Lie algebra, $E(L)$ is also a semilattice of Lie algebras.  Indeed,  for all $(A,a)$ and $(B,b)$ in $E(L)$ we need to check that
$$A+B+\mathbb{F}[a,b]=A+B.$$
If either $A$ or $B$ is a trivial subspace of $L$, the equality is immediate. Now, assume that $A$ and $B$ are non-trivial subspaces of $L$. If $A\neq B$, we have $A+B=L$, and the equality still holds. Finally, if $A=B$, then $[a,b]=0$ for any $a,b\in A$, as $A$ is 1-dimensional. Thus, the equality holds for every $(A,a),(B,b)\in E(L)$.

On the other hand, if $L$ is the 3-dimensional Heisenberg algebra, then it follows from Example~\ref{exHeisenberg} that $E(L)$ is not a semilattice of Lie algebras. 
\end{ex}

To provide more examples of semilattices of Lie algebras, we introduce the following notation. Let $\mathcal{A}$ be a class of non-associative algebras, and let $A\in \mathcal A$. Let us consider the set  
$$\pde_{\mathcal A}(A)=\{\phi: I\to A\in \pde(A)\mid I\unlhd A \mbox{ and } I\in\mathcal{A}\},$$  where
$I\unlhd A$ denotes the fact that $I$ is a two-sided ideal of $A.$
Before stating the following proposition, which offers examples of semilattices of Lie algebras, we first recall that a Lie algebra $L$ is called {\em sympathetic} if it is perfect, centerless, and has no outer derivations, i.e. $[L,L]=L, Z(L)=0$ and $\der(L)=\ad (L)$. Every semisimple Lie algebra is sympathetic, but there are sympathetic Lie algebras that are not semisimple. 

\begin{prop}\label{u1}
Suppose that $\mathcal A$ is any of the following classes: unital associative algebras, unital Jordan algebras,  sympathetic Lie algebras or semisimple Lie algebras.  Let $A\in \mathcal A$. Then, for  $\phi_i:K_i\rightarrow A$ in $ \pde_{\mathcal{A}}(A), i=1,2,$ we have 
$$\dom(\phi_1+\phi_2)=K_1\cap K_2=\phi_1^{-1}(K_2)\cap \phi_2^{-1}(K_1)=\dom ([\phi_1,\phi_2]).$$
Consequently, $\pde_{\mathcal{A}}(A)$ is a  Lie inverse subsemialgebra of $\pde(A)$ which is a semilattice of Lie algebras. 
\end{prop}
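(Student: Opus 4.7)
My plan is to reduce the three claimed equalities
\[
\dom(\phi_1+\phi_2)=K_1\cap K_2=\phi_1^{-1}(K_2)\cap\phi_2^{-1}(K_1)=\dom([\phi_1,\phi_2])
\]
to a single class-by-class containment, namely $\phi_1(K_1\cap K_2)\subseteq K_2$ (the role of $\phi_2$ being symmetric). The first equality is built into the definition of $+$ in $\pen(A)$, and the inclusion $\phi_1^{-1}(K_2)\cap\phi_2^{-1}(K_1)\subseteq K_1\cap K_2$ is automatic from $\phi_i^{-1}(K_j)\subseteq\dom\phi_i=K_i$. Once the reverse containment has been verified for each class, the common domain equality follows, and then $\pde_{\mathcal A}(A)$ inherits the operations $+$, scalar multiplication and $[\cdot,\cdot]$ from $\pde(A)$ provided one also checks that $K_1\cap K_2$ is still an object of $\mathcal A$ in each case. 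The semilattice of Lie algebras identity \eqref{eq:semilatticeAlg} is then free: both $0_{\phi_1+\phi_2}$ and $0_{[\phi_1,\phi_2]}$ are the zero map on the common domain $K_1\cap K_2$, so they coincide.

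In the unital associative case I would begin from the standard observation that the identity $e_i$ of a two-sided unital ideal $K_i\unlhd A$ is a central idempotent of $A$: from $e_ia\in K_i$ one gets $e_ia=e_i(e_ia)=(e_ia)e_i$, and symmetrically $ae_i=e_iae_i$, so $e_ia=ae_i$. Then $K_i=e_iA$, and a short check shows that $K_1\cap K_2=e_1e_2A$ is unital associative with unit $e_1e_2$. For $y\in K_1\cap K_2$, writing $y=(e_1e_2)\,y$ with both factors in $K_1$ and applying the Leibniz rule gives
\[
\phi_1(y)=\phi_1(e_1e_2)\,y+(e_1e_2)\phi_1(y),
\]
which lies in $K_2$ because $y\in K_2$ and $e_2$ is central. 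The unital Jordan case runs on exactly the same lines, with centrality replaced by a Peirce-decomposition argument: for the unit $e$ of a Jordan ideal $K\unlhd A$ one must have $A_{1/2}(e)=0$ (otherwise $e\cdot A_{1/2}(e)=\tfrac12 A_{1/2}(e)$ would fail to sit inside $K=A_1(e)$), so multiplication by $e$ associates with the Jordan product and the calculation above goes through verbatim.

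For the semisimple and sympathetic Lie cases I would exploit perfection of $K_1\cap K_2$. In the semisimple situation this is immediate in characteristic zero: ideals of semisimple Lie algebras are direct summands and therefore semisimple, hence perfect; an analogous structural fact handles the sympathetic case. Writing $y=\sum_k[a_k,b_k]$ with $a_k,b_k\in K_1\cap K_2\subseteq K_2$, the Leibniz rule yields
\[
\phi_1(y)=\sum_k\bigl([\phi_1(a_k),b_k]+[a_k,\phi_1(b_k)]\bigr)\in K_2,
\]
because $K_2\unlhd A$. The step I expect to be most delicate is the sympathetic case: verifying both perfection of $K_1\cap K_2$ and that $K_1\cap K_2$ remains in $\mathcal A$ is not as transparent as in the other three classes, where the corresponding checks are essentially textbook. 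Assembling these pieces gives the domain equality, and from it the claimed subsemialgebra and semilattice of Lie algebras structure fall out at once.
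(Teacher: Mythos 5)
Your proposal is correct and follows essentially the same route as the paper: both arguments rest on the observation that $K_1\cap K_2$ is an idempotent ideal in each of the four classes, so that an arbitrary element factors as a product (or a sum of brackets) of elements of $K_1\cap K_2$, after which the Leibniz identity forces $\phi_i(K_1\cap K_2)\subseteq K_1\cap K_2$ and the three domains coincide. The only difference is one of packaging: where you prove the associative and Jordan cases from scratch via central idempotents and the Peirce decomposition, the paper simply invokes closure of each class under intersection of ideals (citing \cite{Cortes2023} for the Jordan and sympathetic cases), and, exactly as you anticipate, it too disposes of the sympathetic case by appeal to that external structural fact rather than by a direct argument.
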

\begin{proof}
First observe that the intersection of two unital ideals of an associative algebra is unital and that every ideal of a semisimple Lie algebra is itself semisimple. Furthermore, if $\mathcal{A}$ is the class of sympathetic Lie algebras or unital Jordan algebras, then  \cite[Corollary 2.11]{Cortes2023} and~\cite[Proposition 2.14(1)]{Cortes2023} imply that $K_1\cap K_2$ belongs to $\mathcal{A}$. Therefore, in all classes $K_1\cap K_2$ is idempotent. Let $x\in K_1\cap K_2.$ As $x=yz$ for some $y,z\in K_1\cap K_2$, we have
$$\phi_i(x)=\phi_i(yz)=\phi_i(y)z+y\phi_i(z)\in K_1\cap K_2\subseteq K_i,$$ for $i=1,2.$  Consequently, $\phi_1^{-1}(K_2)\cap \phi_2^{-1}(K_1)\supseteq K_1\cap K_2$, and since the opposite inclusion is obvious, we obtain the equality $K_1\cap K_2=\phi_1^{-1}(K_2)\cap \phi_2^{-1}(K_1)$. Moreover, as  $\dom(\phi_1+\phi_2)=\dom([\phi_1,\phi_2])\in \mathcal{A}$, it follows that $\pde_{\mathcal{A}}(A)$ is a  Lie inverse subsemialgebra of $\pde(A)$. Additionally,  $0_{\phi_1+\phi_2}=0_{[\phi_1,\phi_2]}$. So, by Definition~\ref{def:SemilatNonAssocAlg},   $\pde_{\mathcal{A}}(A)$ is a semilattice of Lie algebras.
\end{proof}

Let $\Lambda$ be a meet semilattice,  $\mathcal{C}$ be the  category of associative (or Lie)  algebras 
and let $F: \Lambda\to \mathcal{C}$ be a presheaf. For each object $\lambda$ in $\Lambda$ let $F(\lambda)=S_\lambda$, and for any morphism  $f:\mu\to \lambda$ in $\Lambda$ let $F(f)=\eta_{\lambda,\mu}: S_\lambda\to S_\mu$. The collection $\{\eta_{\lambda,\mu}\mid \lambda\geq \mu\}$ satisfy 
\begin{enumerate}
    \item $\eta_{\lambda,\lambda}=id_{V_{\lambda}}$, and
    \item $ \eta_{\mu,\nu} \eta_{\lambda,\mu}= \eta_{\lambda,\nu}$ for $\lambda \geq\mu \geq \nu$.
\end{enumerate}
It is straightforward to verify that 
$S_F=\dot{\bigcup}_{\lambda\in \Lambda} S_{\lambda}$ endowed with the operations
    \begin{align*}
      x_{\nu}+x_{\mu}&=\eta_{\nu,\lambda}(x_{\nu})+ \eta_{\mu,\lambda}(x_{\mu})\\
      x_{\nu}x_{\mu}&=\eta_{\nu,\lambda}(x_{\nu})\eta_{\mu,\lambda}(x_{\mu}),
         \end{align*}
where $x_\nu\in S_\nu, x_\mu\in S_\mu$, and $\lambda=\nu\wedge\mu$, is a semilattice of 
  associative (or Lie) algebras.  Thus, for a given presheaf $F:\Lambda\to \mathcal{C}$, we obtain a semilattice of associative (or Lie) algebras $S_F.$  This establishes a correspondence $F\to S_F$ from presheaves to  semilattices of associative (or Lie) algebras. For a fixed $\Lambda$ a straightforward verification shows that this correspondence gives a functor from the category of presheaves of associative (or Lie) algebras on  $\Lambda $  to the category of semilattices of associative (or Lie) algebras.

\begin{prop}\label{prop-semila}
 Let $S$ be a semilattice of  associative (or Lie) algebras. Then there exists a presheaf $F(S)$ such that $S$ is isomorphic to $S_{F(S)}=\dot{\bigcup}_{\lambda\in \Lambda}  S_{\lambda}$. 
\end{prop}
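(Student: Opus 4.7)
The plan is to set $\Lambda := \E(S)$, which by Lemma~\ref{lemma2_SemilatticeAlg} is a commutative idempotent semigroup under $(e,f)\mapsto ef = e+f$, hence a meet semilattice with meet $e\wedge f = ef$; and to construct the presheaf $F(S)$ by decomposing $S$ according to its additive idempotents. Concretely, for each $e\in \E(S)$ I put
$$S_e:=\{x\in S\mid 0_x=e\},$$
so that $S=\dot{\bigcup}_{e\in \E(S)}S_e$ as a set, and I expect $F(S)$ to send $e$ to $S_e$, with restriction maps coming from adding idempotents.

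The first step is to verify that each $S_e$ is an associative (respectively Lie) algebra over $\F$, with additive neutral element $e$ and additive inverse $-x$. Closure under addition is clear from $0_{x+y}=0_x+0_y=e$; closure under the product (or Lie bracket) follows from \eqref{eq:semilatticeAlg}; closure under scalar multiplication follows from Lemma~\ref{p1f}. Distributivity in $S_e$ is inherited from $S$ via Lemma~\ref{lemma1_SemilatticeAlg}, associativity in the associative case is inherited directly from $S$, and in the Lie case the Jacobi identity and the equality $[x,x]=0_x=e$ (which in $S_e$ reads $[x,x]=0$) both follow from Lemma~\ref{lemma_SemilatticeLieAlg}.

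Next, whenever $e\preceq f$ in $\E(S)$ (equivalently $e=ef$), I define
$$\eta_{f,e}:S_f\to S_e,\qquad \eta_{f,e}(x):=x+e.$$
This lands in $S_e$ since $0_{x+e}=f+e=fe=e$, and it is $\F$-linear. It preserves the product thanks to Lemma~\ref{lemma2_SemilatticeAlg}: for $x,y\in S_f$ one has $xe=0_x+0_e=f+e=e$ and $ey=e$, whence $(x+e)(y+e)=xy+e=\eta_{f,e}(xy)$; the Lie case is identical. Functoriality follows from $\eta_{e,e}(x)=x+0_x=x$ together with $\eta_{e,g}(\eta_{f,e}(x))=x+e+g=x+g=\eta_{f,g}(x)$ for $g\preceq e\preceq f$ (using $e+g=eg=g$). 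This yields a presheaf $F(S):\Lambda\to\mathcal{C}$.

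To conclude, I would check that the tautological bijection $\iota:S\to S_{F(S)}$, sending $x$ to the same element viewed as a member of $S_{0_x}$, is an isomorphism of semilattices of algebras. For $x\in S_e$, $y\in S_f$ and $\lambda:=ef$, the operations in $S_{F(S)}$ read
$$\eta_{e,\lambda}(x)+\eta_{f,\lambda}(y)=x+y+ef\qquad\text{and}\qquad \eta_{e,\lambda}(x)\cdot\eta_{f,\lambda}(y)=xy+ef,$$
and \eqref{eq:semilatticeAlg} gives $0_{x+y}=0_{xy}=ef$, so $x+y+ef=x+y$ and $xy+ef=xy$ in $S$; compatibility with scalar multiplication is immediate. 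The main obstacle throughout is controlling how idempotents absorb products and sums involving arbitrary elements, but this is exactly the content of Lemma~\ref{lemma2_SemilatticeAlg}, so the remaining verifications reduce to straightforward book-keeping.
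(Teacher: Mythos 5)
Your proposal is correct and follows essentially the same route as the paper: the decomposition $S_\lambda=\{x\in S\mid 0_x=\lambda\}$ over $\Lambda=\E(S)$, the translation maps $x\mapsto x+\mu$ as restriction homomorphisms, and the same idempotent-absorption computations (via Lemma~\ref{lemma2_SemilatticeAlg}) to check multiplicativity and functoriality. You merely spell out a few details the paper leaves as ``easy to see,'' notably the final verification that the tautological bijection is an isomorphism.
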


\begin{proof}
We will prove the proposition in the associative case, the Lie case being similar. 
As $(S,+)$ is a commutative inverse semigroup, the set $\Lambda=\E(S)$ is a meet semilattice.  For each $\lambda\in \Lambda$, let us consider $S_{\lambda}=\{ x\in S\mid 0_x=\lambda\}$. Since for any $x,y\in S$ and $\alpha\in \F$, we have $0_{xy}=0_{x+y}=0_x+0_y$ and $0_{\alpha x}=0_x$, it follows that each $S_\lambda$ is an associative algebra, whose zero element is $\lambda$. For $\lambda\geq \mu$, the translation map $\eta_{\lambda,\mu}:S_{\lambda} \rightarrow S_{\mu}$, defined by $x\mapsto \mu+x,$ is  linear. We verify that $\eta_{\lambda,\mu}$ is also an algebra homomorphism. For let $x,y\in S_\lambda$, then
$$(\mu+x)(\mu+y)=\mu\mu+x\mu+\mu y+xy=
 \mu+\lambda+\mu+\lambda+\mu+xy=\mu+xy,$$  showing that $\eta_{\lambda,\mu}$ preserves the product. Moreover, if  $\lambda\geq \mu \geq \nu$,  then 
$\nu=\mu+\nu$ implies $ \eta_{\mu,\nu} \eta_{\lambda,\mu}= \eta_{\lambda,\nu}$.   This way we obtain  the contravariant functor $F:\Lambda\to \mathcal{C}$ defined by $F(\lambda)=S_\lambda$ and $F(f)=\eta_{\lambda,\mu}$, for a morphism $f:\mu \to \lambda$ in $\Lambda$, which is the claimed  presheaf $F(S).$ It is easy to see that $S$ is isomorphic to   $S_{F(S)}=\dot{\bigcup}_{\lambda\in \Lambda} S_{\lambda}.$ 
\end{proof}

Given  a semilattice of  associative (or Lie) algebras $S,$  let $S_{\lambda}$ be as in the proof of Proposition~\ref{prop-semila}, i.e.   $S_{\lambda}=\{ x\in S\mid 0_x=\lambda\}$, $(\lambda \in \Lambda).$ Then obviously $S$ is a disjoint union of the $S_{\lambda}$'s and identifying it with the external disjoint union of the $S_{\lambda}$'s, we may assume that  $S =S_{F(S)}.$ Then the correspondence $S\to F(S)$ is the inverse of the correspondence $F\to S_F$ defined above, so that there is a one-to-one correspondence between 
presheaves on meet semilattices of associative (or Lie) algebras and   semilattices of associative (or Lie) algebras.

 It is worth noting  that there are homomorphisms 
 of semilattices of associative (or Lie) algebras which do not  correspond to morphisms of presheaves.


\begin{ex} Let $\varphi : A \to B$ be a homomorphism of associative (or Lie) algebras and let $\Lambda $ be the semilattice 
$\{ \lambda, \mu \} $ in which  $ \lambda \geq \mu  $. Write $S_{\lambda} = A,$  $S_{\mu} = B$ and
$\nu _{\lambda, \mu} = \varphi ,  \nu _{\lambda, \lambda } = id _A, \nu _{\mu, \mu} = id _B.$ Then
$S= S_{\lambda} \dot{\cup} S_{\mu}$  is a semilattice of  associative (or Lie) algebras. Consider the map 
$\phi : S \to S$ defined by $\phi (a) = \varphi (a)$ and
$\phi (b) =b$  for any $a\in A, b\in B$. Then evidently
$\phi$ is a homomorphism of inverse semialgebras, which do not come from a morphism of presheaves.
 \end{ex}
 
 Let $\Lambda $ be a meet semilattice and consider the  category   $\mathcal D _{\Lambda}$  whose objects are the semilattices of non-associative algebras $S$ such that $\E (S) \cong \Lambda $. Identify  $\E (S)$ and $\Lambda $. Let the morphisms in $\mathcal D _{\Lambda}$  be those  homomorphisms of  semilattices of non-associative  algebras which restrict to the identity map on $\Lambda $.

\begin{prop}\label{prop:CategoryEquiv} Let $\Lambda $ be a meet semilattice.
    Then the category of sheaves of non-asociative algebras on $\Lambda $  is equivalent to the category
    $\mathcal D _{\Lambda}.$
\end{prop}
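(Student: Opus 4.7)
The plan is to upgrade the already-established object-level bijection between presheaves $F$ on $\Lambda$ and objects of $\mathcal D_{\Lambda}$ (given by $F\mapsto S_F$ and $S\mapsto F(S)$) into a pair of quasi-inverse functors. The object-level correspondence is essentially done in the paragraph following Proposition~\ref{prop-semila}; what remains is to extend both constructions to morphisms and to verify that the resulting functors are mutually quasi-inverse.

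First I would define the functor $\mathbf{S}$ from presheaves on $\Lambda$ (with values in associative, resp.\ Lie, algebras) to $\mathcal D_{\Lambda}$ on morphisms. Given a morphism $\phi\colon F\to G$ of presheaves, i.e.\ a natural family $\phi_{\lambda}\colon F(\lambda)\to G(\lambda)$ commuting with the restriction maps $\eta^{F}_{\lambda,\mu}$ and $\eta^{G}_{\lambda,\mu}$, I set $\mathbf{S}(\phi)\colon S_F\to S_G$ by $\mathbf{S}(\phi)(x)=\phi_{\lambda}(x)$ for $x\in F(\lambda)$. That this preserves the semialgebra operations is a direct check: for $x_{\nu}\in F(\nu)$, $x_{\mu}\in F(\mu)$ and $\lambda=\nu\wedge\mu$, naturality of $\phi$ against the restriction maps yields $\mathbf{S}(\phi)(x_{\nu}+x_{\mu})=\mathbf{S}(\phi)(x_{\nu})+\mathbf{S}(\phi)(x_{\mu})$, and similarly for the product and scalar multiplication. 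Since $\phi_{\lambda}$ is an algebra morphism, it sends the zero of $F(\lambda)$ to the zero of $G(\lambda)$, i.e.\ it fixes $\lambda$; thus $\mathbf{S}(\phi)\in\mathrm{Hom}_{\mathcal D_{\Lambda}}(S_F,S_G)$. Functoriality is immediate.

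Next I would define the functor $\mathbf{F}$ in the other direction. For a morphism $\psi\colon S\to T$ in $\mathcal D_{\Lambda}$, I would first observe that $\psi$ sends $S_{\lambda}$ into $T_{\lambda}$ for every $\lambda\in\Lambda$: if $x\in S_{\lambda}$ then $0_{\psi(x)}=\psi(0_x)=\psi(\lambda)=\lambda$, where we used linearity of $\psi$ (so $\psi(-x)=-\psi(x)$) together with the assumption that $\psi$ restricts to the identity on $\Lambda$. Hence I may define $\mathbf{F}(\psi)_{\lambda}:=\psi|_{S_{\lambda}}\colon S_{\lambda}\to T_{\lambda}$. Naturality with respect to the translation maps $\eta_{\lambda,\mu}(x)=\mu+x$ ($\lambda\geq\mu$) follows at once from additivity and the identity $\psi(\mu)=\mu$: for $x\in S_{\lambda}$,
\[
\mathbf{F}(\psi)_{\mu}(\eta^S_{\lambda,\mu}(x))=\psi(\mu+x)=\psi(\mu)+\psi(x)=\mu+\psi(x)=\eta^T_{\lambda,\mu}(\mathbf{F}(\psi)_{\lambda}(x)).
\]
Each component $\mathbf{F}(\psi)_{\lambda}$ is a homomorphism of associative (resp.\ Lie) algebras because $\psi$ preserves $+$, the scalar action and the product; thus $\mathbf{F}(\psi)$ is a morphism of presheaves.

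Finally, I would check that $\mathbf{S}$ and $\mathbf{F}$ are quasi-inverse. On objects this is precisely the bijection discussed after Proposition~\ref{prop-semila}: identifying $S$ with $\dot\bigcup_{\lambda\in\Lambda}S_{\lambda}$ gives $S=\mathbf{S}(\mathbf{F}(S))$, and conversely any presheaf $F$ is literally recovered from $S_F$ by taking its fibers over $\Lambda$. On morphisms the two composites are the identity by construction: $\mathbf{S}(\mathbf{F}(\psi))$ is defined componentwise as the restriction of $\psi$ to each $S_{\lambda}$, whose union is $\psi$ itself, and $\mathbf{F}(\mathbf{S}(\phi))_{\lambda}$ is by definition $\phi_{\lambda}$. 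The main (only) subtlety is the verification that morphisms in $\mathcal D_{\Lambda}$ indeed restrict fiberwise and commute with the translations $\eta_{\lambda,\mu}$; once this is in place, which is the computation displayed above, the equivalence follows.
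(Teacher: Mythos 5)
Your proposal is correct and follows essentially the same route as the paper: the paper's own proof is a one-sentence assertion that the object-level correspondences $F\mapsto S_F$ and $S\mapsto F(S)$ extend to mutually quasi-inverse functors, and you simply carry out that extension on morphisms (including the key observation that a morphism in $\mathcal D_{\Lambda}$ preserves the fibers $S_{\lambda}$ and commutes with the translation maps because it fixes $\Lambda$ pointwise). The only cosmetic slip is that you phrase the target categories as ``associative, resp.\ Lie'' algebras while the proposition concerns non-associative algebras, but the argument is identical in that generality.
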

\begin{proof} The above constructed correspondence $S\to F(S),$ restricted to the
semilattices of non-associative algebras $S$ with $\E (S) \cong \Lambda ,$ determines a contravariant functor, which  together with the functor  $F\to S_F$  gives a category equivalence.
\end{proof}

\begin{rem} Evidently, the category equivalence in Proposition~\ref{prop:CategoryEquiv} restricts to an equivalence between the category of sheaves of asociative
    (or Lie)  algebras on $\Lambda $  and the category of semilattices of associative (or Lie) algebras $S$ such that $\E (S) \cong \Lambda $ with morphisms as in $\mathcal D _{\Lambda}.$ 
\end{rem}

\begin{cor}\label{c: semillatice}
Every semilattice $S$ of Lie algebras can be embedded into $T^-$ for some semilattice of associative algebras $T$. 
\end{cor}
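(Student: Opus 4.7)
The plan is to combine Proposition~\ref{prop-semila} with the universal enveloping algebra functor, using presheaves as the bridge. By Proposition~\ref{prop-semila} I may identify $S$ with $S_{F(S)} = \dot{\bigcup}_{\lambda \in \Lambda} S_\lambda$, where $\Lambda = \E(S)$ and each $S_\lambda$ is a Lie $\F$-algebra whose zero is $\lambda$, with transition Lie homomorphisms $\eta_{\lambda,\mu}\colon S_\lambda \to S_\mu$ for $\lambda \geq \mu$. For each $\lambda$, write $U_\lambda := U(S_\lambda)$ for the universal enveloping algebra and $\iota_\lambda\colon S_\lambda \hookrightarrow U_\lambda^{-}$ for its canonical Poincar\'e--Birkhoff--Witt embedding (injective as a Lie homomorphism). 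By the universal property of $U$, each $\eta_{\lambda,\mu}$ lifts uniquely to an associative homomorphism $\widetilde{\eta}_{\lambda,\mu}\colon U_\lambda \to U_\mu$ satisfying the naturality identity $\widetilde{\eta}_{\lambda,\mu}\circ \iota_\lambda = \iota_\mu \circ \eta_{\lambda,\mu}$. Functoriality of $U$ yields $\widetilde{\eta}_{\lambda,\lambda} = \mathrm{id}_{U_\lambda}$ and $\widetilde{\eta}_{\mu,\nu}\widetilde{\eta}_{\lambda,\mu} = \widetilde{\eta}_{\lambda,\nu}$ for $\lambda \geq \mu \geq \nu$, so $\lambda \mapsto U_\lambda$ is a presheaf of associative $\F$-algebras on $\Lambda$.

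Next, the construction preceding Proposition~\ref{prop-semila} applied to this presheaf produces a semilattice of associative $\F$-algebras $T := \dot{\bigcup}_{\lambda \in \Lambda} U_\lambda$; by Lemma~\ref{lemma1_SemilatticeAlg}, $T$ is distributive, so $T^{-}$ is a Lie inverse semialgebra by Proposition~\ref{p19} (in fact a semilattice of Lie algebras by Remark~\ref{rem:AssocToLieSamillatAlg}). I would then define $\Phi\colon S \to T^{-}$ by $\Phi(x) := \iota_\lambda(x)$ when $x \in S_\lambda$. For $x \in S_\lambda$, $y \in S_\mu$ and $\nu := \lambda \wedge \mu$, the sum and bracket of $x$ and $y$ in $S$ live in the fibre $S_\nu$, and combining the naturality square with the fact that $\iota_\nu$ is a Lie homomorphism into $U_\nu^{-}$ gives
\begin{align*}
\Phi(x+y) &= \iota_\nu\bigl(\eta_{\lambda,\nu}(x) + \eta_{\mu,\nu}(y)\bigr) = \widetilde{\eta}_{\lambda,\nu}(\iota_\lambda(x)) + \widetilde{\eta}_{\mu,\nu}(\iota_\mu(y)) = \Phi(x) + \Phi(y), \\
\Phi([x,y]) &= \iota_\nu\bigl([\eta_{\lambda,\nu}(x), \eta_{\mu,\nu}(y)]\bigr) = \bigl[\widetilde{\eta}_{\lambda,\nu}(\iota_\lambda(x)), \widetilde{\eta}_{\mu,\nu}(\iota_\mu(y))\bigr] = [\Phi(x), \Phi(y)],
\end{align*}
while compatibility with scalar multiplication is immediate. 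Injectivity of $\Phi$ follows fibrewise from PBW once one notes that $\Phi$ respects the fibration over $\Lambda$: it sends the idempotent $\lambda \in \E(S)$ (the zero of $S_\lambda$) to the zero of $U_\lambda$, which is the corresponding idempotent of $T$.

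The only step requiring genuine input beyond formal bookkeeping is the naturality square $\widetilde{\eta}_{\lambda,\mu}\circ \iota_\lambda = \iota_\mu \circ \eta_{\lambda,\mu}$, but this is precisely the defining property of the universal associative extension of the Lie homomorphism $\iota_\mu \circ \eta_{\lambda,\mu}\colon S_\lambda \to U_\mu^{-}$. With this in hand the fibrewise PBW embeddings assemble uniformly into $\Phi$, and the corollary follows.
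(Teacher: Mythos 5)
Your proposal is correct and follows essentially the same route as the paper: identify $S$ with $S_{F(S)}$ via Proposition~\ref{prop-semila}, apply the universal enveloping algebra functor to the resulting presheaf to get a presheaf of associative algebras, form $T=\dot{\bigcup}_{\lambda\in\Lambda}U(S_\lambda)$, and assemble the fibrewise PBW embeddings into an injective homomorphism $S\to T^-$. The paper states this more tersely, while you spell out the naturality square and the fibrewise verification of the homomorphism property, but the argument is the same.
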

\begin{proof}
Let $U$ be the covariant functor that sends a Lie algebra to its universal enveloping algebra. As, by Proposition~\ref{prop-semila}, 
 $S\cong S_{F(S)},$ 
the composition $UF$ is a contravariant functor
from the category of semilattices of Lie algebras to that of presheaves  of associative algebras on $\Lambda .$
For each $\lambda\in \Lambda$, let $\iota_\lambda:F(\lambda)\to U(F(\lambda))^-$ be the canonical injection. Then, setting
 $T=
\dot{\bigcup}_{\lambda\in \Lambda} U(F(\lambda))$, it is readily seen   that  the map $\iota:L\to T^-$, defined by $\iota|_{F(\lambda)}=\iota_\lambda$, is an injective homomorphism of semilattices of Lie algebras. 
\end{proof}

\begin{rem}Considering an inverse semivector space $V$ as a semilattice of associative (or Lie) algebras, where the product for all $x$ and $y$ is defined by $xy=0_{x+y}$, we obtain by Proposition~\ref{prop-semila} a presheaf $F(V)$ of vector spaces on $\Lambda .$  Then 
 $V=
\dot{\bigcup}_{\lambda\in \Lambda} F(\lambda)$, with
$F(\lambda )= V_{\lambda}=\{ x\in V\mid 0_x=\lambda\},$ which we call a {\it semilattice of vector spaces}. Thus,  any inverse vector  space $V$ is a semilattice of vector spaces. Obviously,  $(V,+)$  is a semilattice 
of abelian groups, in the sense which is well-established in the theory of semigroups. Clearly, any
non-associative  inverse semialgebra, with respect to the addition and multiplication by the scalars from the field $\F ,$ is a  semilattice of vector spaces.
\end{rem}


\section{Partial actions and premorphisms}\label{s: partial actions}
The main goal of this section is to introduce the notion   of a premorphism between Lie inverse  semialgebras,  that of a partial action of a Lie algebra $L$ on a non-associative algebra and to explore their relationship with the Lie inverse semialgebra \( E(L) \) introduced in  Section~\ref{subsec:LieInv}. 


 Partial group actions on sets can be defined using the concept of a premorphism of inverse semigroups (see \cite{KL}). For the Lie case we give the following:
\begin{defn}\label{def:premorphism}
Let  $S,T$ be two Lie inverse semialgebras. A map $\rho:S\rightarrow T$ is called a {\em premorphism} if for all $x,y\in S$ and $0\neq\alpha\in \F$ we have
\begin{enumerate}
\item $\rho(x+y)\succeq \rho(x)+\rho(y),$
         \item $\rho([x,y])\succeq [\rho(x),\rho(y)],$
         \item $\rho(\alpha x)=\alpha \rho(x)$.  
\end{enumerate}
If  $(S,+)$ and $(T,+)$ are monoids we also require that $\rho(0)=0.$  
\end{defn}

 Notice that (1) and (3) of Definition~\ref{def:premorphism} imply that 
$$\rho (0_x) \succeq 0_{\rho(x)},$$  for any $x\in S.$ Indeed, $ \rho (0_x) = \rho (x + (-1)x) \succeq 
 \rho (x)  + \rho((-1)x)=  \rho (x)  - \rho(x)=
 0_{\rho(x)}.$ Moreover,
if $\rho: S\to T$ is a premorphism of Lie inverse semialgebras with zero element, then, in particular, $\rho$ is a premorphism of the additive semigroups $(S,+)$ and $(T,+)$. Hence, as $(S,+)$ and $(T,+)$ are monoids, then, by~\cite[Proposition 2.1]{KL}, $\rho$ satisfies
\begin{equation}\label{eq-premorphism}
    \rho(x)+\rho(y)=\rho(x+y)+0_{\rho(y)},
\end{equation}
for all $x,y\in S$.

\begin{defn}\label{strong}
 Let $S$ and $T$ be Lie inverse semialgebras.  
A premorphism $\rho:S \rightarrow T$ is called {\em strong} if
$$ [\rho(x),\rho(y)]=\rho([x,y])+0_{\rho(x)+\rho(y)},
$$ for all $x,y \in L.$  
\end{defn}

One can easily see that  a premorphism  $\rho:S \rightarrow T$ is strong if and only if
\begin{equation}
0_{\rho([x,y])}+0_{\rho(x)+\rho(y)}=0_{[\rho(x),\rho(y)]},
\end{equation}
for all $x,y\in S$. For instance, if $T$ is a semilattice of Lie algebras,  every premorphism $\rho: S\to  T$ is strong. 

The next fact gives an important for us example of a strong premorphism.

\begin{lem}\label{tau}
Let $L$ be a Lie algebra. Then, the mapping $\tau :L \rightarrow E(L)$, given by $a\mapsto (\mathbb{F}a,a)$, is a  strong premorphism. 
  \end{lem}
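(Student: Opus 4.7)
The plan is to verify the three premorphism axioms from Definition~\ref{def:premorphism} one by one by direct computation in $E(L)$, and then verify the strong premorphism identity from Definition~\ref{strong}. All the work will rely only on the explicit descriptions of addition, Lie bracket, scalar multiplication, and the natural order in $E(L)$ given in Section~\ref{subsec:LieInv}, namely $(A,a)+(B,b) = (A+B, a+b)$, $[(A,a),(B,b)] = (A+B+\mathbb{F}[a,b],[a,b])$, $\alpha(A,a) = (A,\alpha a)$, $0_{(A,a)} = (A,0)$, and the characterization $(A,a) \preceq (B,b) \Longleftrightarrow a=b \text{ and } B\subseteq A$.

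First, for axiom (1), I compute $\tau(a+b) = (\mathbb{F}(a+b), a+b)$ and $\tau(a)+\tau(b) = (\mathbb{F}a+\mathbb{F}b, a+b)$; since $\mathbb{F}(a+b) \subseteq \mathbb{F}a+\mathbb{F}b$, the order characterization yields $\tau(a+b) \succeq \tau(a)+\tau(b)$. For axiom (2), I compute $\tau([a,b]) = (\mathbb{F}[a,b],[a,b])$ and $[\tau(a),\tau(b)] = (\mathbb{F}a+\mathbb{F}b+\mathbb{F}[a,b],[a,b])$; the inclusion $\mathbb{F}[a,b] \subseteq \mathbb{F}a+\mathbb{F}b+\mathbb{F}[a,b]$ gives the required $\succeq$. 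For axiom (3), for any $0\neq\alpha\in\F$ one has $\mathbb{F}(\alpha a) = \mathbb{F}a$, so $\tau(\alpha a) = (\mathbb{F}a,\alpha a) = \alpha(\mathbb{F}a,a) = \alpha\tau(a)$.

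Finally, for the strong property, I compute both sides of the defining identity:
\begin{align*}
[\tau(a),\tau(b)] &= (\mathbb{F}a+\mathbb{F}b+\mathbb{F}[a,b], [a,b]),\\
\tau([a,b]) + 0_{\tau(a)+\tau(b)} &= (\mathbb{F}[a,b],[a,b]) + (\mathbb{F}a+\mathbb{F}b,0) = (\mathbb{F}a+\mathbb{F}b+\mathbb{F}[a,b],[a,b]),
\end{align*}
which coincide, establishing strongness. There is essentially no obstacle here: once the operations and the order in $E(L)$ are unfolded, every step is a one-line comparison of finite-dimensional subspaces and of vectors, so the proof will be short and entirely computational.
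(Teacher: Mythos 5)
Your proof is correct and follows essentially the same computations as the paper's: unfold the operations in $E(L)$, compare second coordinates and subspace inclusions for the two $\succeq$ axioms, and check the strong identity by computing both sides explicitly. The only (trivial) point the paper checks that you omit is $\tau(0)=(\mathbb{F}\cdot 0,0)=(0,0)$, which is required by Definition~\ref{def:premorphism} since $(L,+)$ and $(E(L),+)$ are monoids.
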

  \begin{proof} It is clear that   $\tau(0)=(0,0)$. Let $a,b\in L$. Then
$$\tau (a)+\tau(b)= (\mathbb{F}a+ \mathbb{F}b,a+b)\preceq (\mathbb{F}(a+b),a+b)=\tau(a+b).$$
Moreover,
$$[\tau (a),\tau(b)] = [(\mathbb{F}a,a), (\mathbb{F}b,b)]=(\mathbb{F}a + \mathbb{F}b +\mathbb{F}[a,b],[a,b])\preceq (\mathbb{F}[a,b],[a,b])=\tau([a,b]). $$ 
 and  
$$[\tau (a),\tau(b)] = 
(\mathbb{F}a , 0) + (\mathbb{F}b, 0)  + 
(\mathbb{F}[a,b],[a,b]) = 0_{\tau (a)} + 0_{\tau (b) }
+ \tau([a,b]) = 0_{\tau (a) + \tau (b) } + \tau([a,b]).$$ 
Finally, for $\alpha\neq 0$ we have that
$\tau (\alpha a)= (\mathbb{F}(\alpha a),\alpha a )=(\mathbb{F} a,\alpha a )= \alpha (\mathbb{F} a, a )= \alpha \tau (a)$.
\end{proof}
 
 In analogy with the case of groups it is natural to give the following:
 
\begin{defn}\label{def:partialAc} 
Let $L$ be a Lie algebra and $A$ be a non-associative algebra. A collection 
$$\theta=\{\theta_x: D_{x} \rightarrow A\mid D_x\unlhd A, x\in L \}\subseteq\pde(A)$$ of partial derivations of $A$ is called {\em a partial action} of $L$ on $A,$   if the mapping 
$$\theta : L \to  \pde(A), \;\;\;  x \mapsto \theta _x, \; \; \;  (x\in L), $$    is
a premorphism.
\end{defn}

The next fact gives an alternative definition of a partial action of a Lie algebra. 

\begin{prop}\label{prop:partialAc}  
Let $L$ be a Lie algebra and $A$ be a non-associative algebra. A collection 
$$\theta=\{\theta_x: D_{x} \rightarrow A\mid D_x\unlhd A, x\in L \}\subseteq\pde(A)$$ of partial derivations of $A$ is   a partial action of $L$ on $A$ if and only if  the following properties are  satisfied:
\begin{enumerate}
     \item $D_0 = A,$ and $\theta_0$ is the zero derivation in $A.$
     \item $ D_{x}\cap D_{y} \subseteq D_{x+y}$ and $\theta_{x+y}(a)=\theta_x(a)+ \theta_{y}(a)$ for all $x,y\in L$ and $a\in  D_{x}\cap D_{y}$.
     \item $\theta_x^{-1} (D_{y} )\cap \theta_{y}^{-1} (D_x) \subseteq D_{[x,y]} $ for all $x,y\in L$ and $$\theta_{[x,y]}=\theta_x\theta_{y}(a)-\theta_y\theta_{x}(a)\mbox{ for all } a\in \theta_x^{-1} (D_{y} )\cap \theta_{y}^{-1} (D_x  ).$$
     \item $ D_{\alpha x}=D_{ x}$ and $\theta_{\alpha x}(a)=\alpha \theta_x(a)$ for all $x\in L, a\in D_x$ and $ 0\neq \alpha\in \mathbb{F}.$  
\end{enumerate} 
\end{prop}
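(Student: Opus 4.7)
The plan is simply to unwind the definition of a premorphism $\theta : L \to \pde(A)$ given in Definition~\ref{def:premorphism} and translate each clause into a condition on the domains $D_x$ and the values $\theta_x(a)$. The key observation, already recorded in the proof of Proposition~\ref{p20}, is that in $\pde(A)$ the natural order satisfies $\phi_1 \preceq \phi_2$ iff $\phi_1$ is a restriction of $\phi_2$; also the zero element is the zero map $0 : A \to A$, the sum $\phi_1 + \phi_2$ has domain $\dom(\phi_1) \cap \dom(\phi_2)$ with pointwise addition, and the Lie bracket $[\phi_1,\phi_2] = \phi_1 \phi_2 - \phi_2 \phi_1$ has domain $\phi_1^{-1}(\dom \phi_2) \cap \phi_2^{-1}(\dom \phi_1)$. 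Since $(L,+)$ is a monoid with identity $0$, a premorphism is required to satisfy $\theta(0) = 0$.

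For the forward direction, assume $\theta : L \to \pde(A)$ is a premorphism. The condition $\theta(0) = 0$ says $\theta_0 : D_0 \to A$ coincides with the zero map $A \to A$, giving $D_0 = A$ and $\theta_0 \equiv 0$, i.e.\ (1). The inequality $\theta(x+y) \succeq \theta(x) + \theta(y)$ says that $\theta_{x+y}$ is an extension of $\theta_x + \theta_y$, whose domain is $D_x \cap D_y$; this is exactly (2). The inequality $\theta([x,y]) \succeq [\theta(x),\theta(y)]$ says $\theta_{[x,y]}$ extends the partial derivation $\theta_x\theta_y - \theta_y\theta_x$ defined on $\theta_y^{-1}(D_x) \cap \theta_x^{-1}(D_y)$, which is (3). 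Finally, for $0 \ne \alpha \in \F$ the equality $\theta(\alpha x) = \alpha \theta(x)$ matches domains and values, giving $D_{\alpha x} = D_x$ and $\theta_{\alpha x}(a) = \alpha \theta_x(a)$, i.e.\ (4).

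For the converse, assume (1)-(4). Clause (1) gives $\theta_0 = 0$ in $\pde(A)$. By (2), $D_x \cap D_y \subseteq D_{x+y}$ and $\theta_{x+y}$ restricted to $D_x \cap D_y$ equals $\theta_x + \theta_y$, so $\theta(x+y) \succeq \theta(x) + \theta(y)$. Analogously, (3) together with the description of the Lie bracket in $\pde(A)$ yields $\theta([x,y]) \succeq [\theta(x),\theta(y)]$. Clause (4) gives $\theta(\alpha x) = \alpha\theta(x)$ for every $0 \ne \alpha \in \F$. Hence $\theta$ satisfies all the conditions of Definition~\ref{def:premorphism}, and is therefore a premorphism, completing the proof.

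I do not expect any serious obstacle: the argument is essentially a translation between ``$\succeq$ in $\pde(A)$'' and ``domain inclusion plus agreement on the smaller domain''. The only point requiring a moment of care is that the scalar condition of a premorphism is imposed only for $\alpha \ne 0$ (Definition~\ref{def:premorphism}(3) and Remark~\ref{rem:linear}), which matches the hypothesis $0 \ne \alpha \in \F$ in (4), so no separate verification for $\alpha = 0$ is needed.
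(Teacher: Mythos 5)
Your proposal is correct: the paper in fact states Proposition~\ref{prop:partialAc} without any proof, treating it as an immediate unwinding of Definition~\ref{def:premorphism} using the explicit description of $0$, $+$, $[\cdot,\cdot]$ and $\preceq$ in $\pde(A)\subseteq\pen(A)^-$, which is exactly what you do. Your translation of each clause (including the observation that the scalar axiom is only imposed for $\alpha\neq 0$, matching item (4)) is accurate, so there is nothing to add.
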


Note that if every $D_x$ is equal to $A$, then this concept coincides with the traditional notion of an action (by derivations) of $L$ on $A$, and we shall call it a {\em global action}.

The inclusion $D_x\cap D_y\subseteq D_{x+y}$ in  Proposition~\ref{prop:partialAc} can be replaced by the equality $$D_x\cap D_y=D_{x+y}\cap D_x, \;\;\; \forall x,y \in L.$$
 Indeed, this inclusion together with condition~(4) imply $D_{x+y}\cap D_{x}=D_{x+y}\cap D_{-x}\subseteq D_y$, which yields the desired equality.

A partial action $\theta=\{\theta_x: D_{x} \rightarrow A\mid D_x\unlhd A, x\in L \}$ of $L$ on $A$ will be called {\em strong }  if  the premorphism 
$\theta : L \to  \pde(A)$ is strong. Equivalently, 
$\theta $ is strong if 
$$\theta_x^{-1} (D_{y} )\cap \theta_{y}^{-1} (D_x)=D_x\cap D_y \cap D_{[x,y]}.$$
for all $x,y\in L.$ This implies that
$$ \theta^{-1}_x(D_y)\cap D_y  \cap D_{[x,y]} =
D_x \cap \theta^{-1}_y (D_x)  \cap D_{[x,y]} = D_x\cap D_y \cap D_{[x,y]}, $$ for all $x, y\in L$. 
Indeed,  
$$D_x\cap D_y  \cap D_{[x,y]}=\theta_x^{-1} (D_{y} )\cap \theta_{y}^{-1} (D_x) \cap D_{[x,y]} \subseteq \theta^{-1}_x(D_y)\cap D_y \cap D_{[x,y]} \subseteq D_x\cap D_y \cap D_{[x,y]},$$
yielding $\theta^{-1}_x(D_y)\cap D_y \cap D_{[x,y]} = D_x\cap D_y \cap D_{[x,y]},$ and the other equality is obtained analogously.

\begin{defn}
Assume that $\mathcal A$  is a class of nonassociative algebras. Let $L$ be a Lie algebra and $A\in \mathcal{A}$. We say that a  partial action  $\theta=\{\theta_x: D_{x} \rightarrow A\mid D_x\unlhd A, x\in L \}$ of $L$ on $A$ is an {\em $\mathcal{A}$-partial action} if  each $D_x\in \mathcal{A}$.
\end{defn}

Suppose that $B$ is a nonassociative algebra and that $\eta: L\to \der(B)$ is a Lie algebra homomorphism. If $A$ is an ideal of $B$, then for each $x\in L$ we set  $D_x=A\cap \eta_x^{-1}(A)$ and $\theta_x=\eta_x|_{{D_x}}.$ It is straightforward to verify that the collection $\theta=\{\theta_x: D_{x} \rightarrow A\mid x\in L \}$ is a partial action of $L$ on $A$  such that 
$$\theta_x^{-1} (D_{y} )\cap \theta_{y}^{-1} (D_x)= D_{[x,y]}\cap \theta_x^{-1} (D_{y} )\cap D_y =
D_{[x,y]} \cap D_x \cap \theta_y^{-1} (D_{x} ) , $$ for all $x,y\in L.$ This means that the premorphism 
$\theta: L\to \pde(A), x \mapsto \theta _x ,$ is such that 
$$[\theta(x),\theta(y)]=\theta([x,y])+0_{\theta(y)\theta(x)}+0_{\theta(y)} =\theta([x,y])+0_{\theta(x)\theta(y)}+0_{\theta(x)},$$ for all $x,y\in L.$
This partial action is called the {\em restriction } of $\eta$ to $A$. If $A$ is invariant under the action $\eta$, then the restriction of $\eta$ to $A$ constitutes a global action on $A$. For instance,  if $L$ is a Lie algebra that contains an ideal $I$ that is not invariant under $\der(L)$ (e.g.~\cite[p. 75]{Ja}), then the resulting restriction of the action of $\der(L)$ to $I$ is not a  global partial action.

\begin{ex}
Let $L$ be a Lie algebra and $A$ be a nonassociative idempotent algebra, meaning $A^2=A$. Define $D_0=A$ and $D_x=0$ for $x\neq 0$, and  let $\theta_x$ be the zero map on $D_x$ for every $x\in L$. It is straightforward to check that  $\theta=\{\theta_x: D_{x} \rightarrow A\mid x\in L \}$ is a strong partial action of $L$ on $A$. Furthermore, $\theta$ is not a restriction of a global action of $L$. To see why, assume by contradiction that there exist a nonassociative algebra $B$ with $A\unlhd B$, and an action  $\eta: L\to \der(B)$, such that $D_x=\eta_x^{-1}(A)\cap A$ for every $x\in L$. Then $$\eta_x(A)=\eta_x(A^2)=A\eta_x(A)+\eta_x(A)A\subseteq A$$
yielding $A\subseteq \eta_x^{-1}(A)$. Consequently, $D_x=A$ for every $x\in L$, contradicting our  definition of $D_x$. Thus, $\theta$ cannot be a restriction of a global action of 
$L.$
\end{ex}


\begin{lem}\label{welldefined}
Let $L$ be a Lie algebra, $T$ be  a Lie inverse semialgebra, and $\rho: L\to T$ a premorphism. Assume that $A$ is a finite-dimensional subspace of $L$ and that $\beta$ is a finite set of generators of $A$. Then   
$$\inf \E(\rho(A)):=\inf\{0_{\rho(a)}\mid a\in A\}=\sum_{x_i\in \beta} 0_{\rho(x_i)}.$$ 
In particular, the sum $\sum_{x_i\in \beta} 0_{\rho(x_i)}$ does not depend of the finite set of generators of $A$. 
\end{lem}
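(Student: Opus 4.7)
The plan is to pass to the meet semilattice $(\E(T),\preceq)$, in which a routine check shows that the meet of two idempotents is their sum: $e+f$ is idempotent, $e+f\preceq e$ and $e+f\preceq f$ are immediate from the definition of $\preceq$, and any $g\in\E(T)$ with $g\preceq e$ and $g\preceq f$ satisfies $g=e+g=f+g=(e+f)+g$, so $g\preceq e+f$. Consequently the infimum of any finite subset of $\E(T)$ equals its sum; in particular $\inf_{x_i\in\beta}0_{\rho(x_i)}=\sum_{x_i\in\beta}0_{\rho(x_i)}$.

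Next I would show that $\sum_{x_i\in\beta}0_{\rho(x_i)}\preceq 0_{\rho(a)}$ for every $a\in A$. Fixing an expansion $a=\sum_{i=1}^n\alpha_i x_i$, iterated application of (1) of Definition~\ref{def:premorphism}, together with the fact that $u\succeq u'$ and $v\succeq v'$ imply $u+v\succeq u'+v'$ (the defining idempotents simply add), yields $\rho(a)\succeq\sum_{i=1}^n\rho(\alpha_i x_i)$. For indices with $\alpha_i\neq 0$, condition (3) of Definition~\ref{def:premorphism} gives $\rho(\alpha_i x_i)=\alpha_i\rho(x_i)$; for indices with $\alpha_i=0$, the observation immediately after Definition~\ref{def:premorphism} (namely $\rho(0)\succeq 0_{\rho(x_i)}$) combined with Lemma~\ref{p1f}(2) gives $\rho(\alpha_i x_i)=\rho(0)\succeq 0_{\rho(x_i)}=\alpha_i\rho(x_i)$. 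Hence $\rho(a)\succeq\sum_{i=1}^n\alpha_i\rho(x_i)$. Now the map $s\mapsto 0_s$ is monotone (since $v=u+e$ yields $0_v=0_u+e$), so applying it and using the identities $0_{s+t}=0_s+0_t$ and $0_{\alpha s}=0_s$ from Lemma~\ref{p1f}(2) gives $0_{\rho(a)}\succeq\sum_{i=1}^n 0_{\rho(x_i)}$, as required.

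The reverse comparison is immediate from the inclusion $\{0_{\rho(x_i)}:x_i\in\beta\}\subseteq\{0_{\rho(a)}:a\in A\}$, which forces $\inf\{0_{\rho(a)}:a\in A\}\preceq\inf_{x_i\in\beta}0_{\rho(x_i)}=\sum_{x_i\in\beta}0_{\rho(x_i)}$. Combining with the previous step yields the desired equality, and the ``in particular'' statement follows at once, since $\inf\{0_{\rho(a)}:a\in A\}$ depends only on $A$ and $\rho$. The only mild subtlety is that $\beta$ is merely a spanning set (not a basis), so zero coefficients may occur in the expansion of $a$, and one has to verify that these do not break the chain of $\succeq$'s; this is exactly where the remark $\rho(0)\succeq 0_{\rho(x)}$ after Definition~\ref{def:premorphism} is used.
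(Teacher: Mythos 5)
Your proof is correct and follows essentially the same route as the paper's: expand $a=\sum_i\alpha_i x_i$, use the premorphism inequalities to get $0_{\rho(a)}\succeq\sum_i 0_{\rho(x_i)}$, and then check that any common lower bound of the $0_{\rho(x_i)}$ lies below their sum. Your explicit verification that finite meets in $\E(T)$ are computed by sums, and your separate treatment of zero coefficients via $\rho(0)\succeq 0_{\rho(x)}$, only make explicit two small points the paper leaves implicit.
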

\begin{proof}
Let $a\in A$. Then $a=\sum_{x_i\in \beta}\alpha_ix_{i},$ for some $\alpha_i\in \F$. As $\rho$ is a premorphism, we have $\rho(a)=\rho (\sum_{x_i\in \beta}\alpha_ix_{i}) 
\succeq \sum_{x_i\in \beta}\alpha_i\rho(x_{i}),$ which implies
$$0_{\rho(a)}\succeq\sum_{x_i\in \beta}\alpha_i0_{\rho(x_i)}=\sum_{x_i\in \beta}0_{\rho(x_i)}.$$
Thus $\sum_{x_i\in \beta}0_{\rho(x_i)}$ is a lower bound of $\E(\rho(A))$. Now, suppose that $\lambda\in \E(\rho(L))$ is another lower bound of $\E(\rho(A)).$ Then, in particular, $\lambda\preceq 0_{\rho(x_i)}$ for all $x_i\in \beta$, and hence $\lambda\preceq \sum_{x_i\in \beta}0_{\rho(x_i)}.$ Therefore, $\sum_{x_i\in \beta}0_{\rho(x_i)}$ is the infimum of $\E(\rho(A))$.
\end{proof}

We derive the following result as a direct  consequence of the Lemma~\ref{welldefined}.

\begin{cor}\label{cor:infErho}
Let $L,\rho$ and $T$ as defined as in Lemma~\ref{welldefined}. Then, for any finite-dimensional subspaces  $A,B$  of $L$ we have 
$$\inf\E(\rho(A+B))=\inf\E(\rho(A))+\inf\E(\rho(B)).$$
\end{cor}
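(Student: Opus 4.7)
The plan is to derive this directly from Lemma~\ref{welldefined} by choosing compatible generating sets. First, I would fix a finite generating set $\beta_A$ of $A$ and a finite generating set $\beta_B$ of $B$, and then observe that $\beta_A \cup \beta_B$ is a finite generating set of the sum $A+B$. This lets me invoke Lemma~\ref{welldefined} three times, once for each of $A$, $B$, and $A+B$.

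Next, I would apply the explicit formula from Lemma~\ref{welldefined} to write
\[
\inf\E(\rho(A))=\sum_{x\in \beta_A}0_{\rho(x)}, \qquad \inf\E(\rho(B))=\sum_{y\in \beta_B}0_{\rho(y)},
\]
and
\[
\inf\E(\rho(A+B))=\sum_{z\in \beta_A\cup \beta_B}0_{\rho(z)}.
\]
The key observation to conclude is that the right-hand side of the last equation equals $\sum_{x\in \beta_A}0_{\rho(x)}+\sum_{y\in \beta_B}0_{\rho(y)}$. This is where one must be slightly careful, since $\beta_A$ and $\beta_B$ may overlap, but since every $0_{\rho(z)}$ is an idempotent of the commutative additive inverse semigroup $T$, we have $0_{\rho(z)}+0_{\rho(z)}=0_{\rho(z)}$, so duplicated terms do not affect the sum. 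Hence the two expressions agree, yielding the stated equality.

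I do not expect any real obstacle: the corollary is flagged in the text as a direct consequence of Lemma~\ref{welldefined}, and the only subtle point is the absorbing behavior of the idempotents under addition, which follows immediately from the definition of $\E(T)$. The whole argument should fit in a few lines, with no need to revisit the premorphism axioms or the structure of $T$ beyond what Lemma~\ref{welldefined} already provides.
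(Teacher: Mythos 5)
Your proof is correct and follows exactly the route the paper intends (the corollary is left as a direct consequence of Lemma~\ref{welldefined}, and the same union-of-generating-sets argument appears explicitly in the proof of Proposition~\ref{prop-inf}(1)). The point you flag about duplicated generators being absorbed because each $0_{\rho(z)}$ is an additive idempotent is the only subtlety, and you handle it correctly.
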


The following result was inspired by a well-known result due to R. Exel~\cite[Proposition 2.2]{Exel1998} (see also~\cite[Theorem 2.4]{KL}), which establishes  a correspondence between premorphisms from a group $G$ into a inverse monoid and homomorphism from the Exel semigroup into the same inverse monoid. 

\begin{thm}\label{prouniq}
Let $L$ be a Lie algebra and let $\tau:L \rightarrow E(L)$ be as in Lemma~\ref{tau}. Let, furthermore, $S$ be a Lie inverse semialgebra with zero satisfying \eqref{eq:Sminus}. In particular, we may take $S=T^{-}$, where $T$ a right distributive associative inverse semialgebra with zero.  Suppose that $\rho :L \rightarrow S$ is a premorphism. Then,  the mapping $\widetilde{\rho}:E(L) \rightarrow S$ defined by $(A,a)\mapsto \inf\E(\rho(A))+\rho (a),$ is the unique linear map of inverse semivector spaces such that
\begin{equation}\label{eq:unique}
\widetilde{\rho} \circ \tau =\rho .
\end{equation} Moreover,  
\begin{equation}\label{eq:moreover}
[\widetilde{\rho}(A,a),\widetilde{\rho}(B,b)]=\widetilde{\rho} ([(A,a),(B,b)]) +[\rho (a),\inf\E(\rho(B)) ]+ [\inf\E(\rho(A)), \rho(b)],
\end{equation}
 for all $(A,a),(B,b)\in E(L).$ 
\end{thm}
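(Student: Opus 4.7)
The plan is to verify, in order: well-definedness of $\widetilde{\rho}$, the identity $\widetilde{\rho}\circ\tau=\rho$, $\F$-linearity, uniqueness with these properties, and the bracket formula \eqref{eq:moreover}. Well-definedness is immediate from Lemma~\ref{welldefined}, since $\inf\E(\rho(A))$ makes sense for any finite-dimensional $A$. Taking $\{a\}$ as a generating set of $\F a$, the identity $\widetilde{\rho}(\tau(a))=0_{\rho(a)}+\rho(a)=\rho(a)$ gives \eqref{eq:unique} for free.

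For $\F$-linearity, I set $e_A:=\inf\E(\rho(A))$ and $e_B:=\inf\E(\rho(B))$. Additivity follows from Corollary~\ref{cor:infErho}, which splits $\inf\E(\rho(A+B))=e_A+e_B$, combined with the premorphism identity \eqref{eq-premorphism}, namely $\rho(a)+\rho(b)=\rho(a+b)+0_{\rho(b)}$; the extra $0_{\rho(b)}$ is absorbed since $e_B\preceq 0_{\rho(b)}$ and the sum of two idempotents comparable under $\preceq$ equals the smaller one. Scalar preservation for $0\neq\alpha$ reduces to $\alpha e_A=e_A$, which is Lemma~\ref{p1f}(3), and the case $\alpha=0$ follows from Remark~\ref{rem:linear}. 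For uniqueness, given a finite generating set $\{x_1,\dots,x_n\}$ of $A$ with $a\in A$, I observe that in $E(L)$,
\[
(A,a)=0_{\tau(x_1)}+\cdots+0_{\tau(x_n)}+\tau(a).
\]
Because $\F$-linear maps of inverse semivector spaces preserve $0_x$ (Remark~\ref{rem:linear}), any $\psi$ with $\psi\circ\tau=\rho$ is forced to equal $\widetilde{\rho}$ via Lemma~\ref{welldefined}.

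The main obstacle is \eqref{eq:moreover}. I would expand $[\widetilde{\rho}(A,a),\widetilde{\rho}(B,b)]=[e_A+\rho(a),e_B+\rho(b)]$, using (3) and (4) of Definition~\ref{def6}, as
\[
[e_A,e_B]+[e_A,\rho(b)]+[\rho(a),e_B]+[\rho(a),\rho(b)],
\]
with $[e_A,e_B]=e_A+e_B$ by (6) of Definition~\ref{def6}. On the right-hand side, iterating Corollary~\ref{cor:infErho} on $A+B+\F[a,b]$ and using $0_{\rho([a,b])}+\rho([a,b])=\rho([a,b])$ yields $\widetilde{\rho}([(A,a),(B,b)])=e_A+e_B+\rho([a,b])$. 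The critical step is to bridge $[\rho(a),\rho(b)]$ on the left and $\rho([a,b])$ on the right: since $\rho$ is a premorphism, $[\rho(a),\rho(b)]=\rho([a,b])+0_{[\rho(a),\rho(b)]}$, and the hypothesis \eqref{eq:Sminus} on $S$ is precisely what allows me to rewrite the idempotent as $0_{[\rho(a),\rho(b)]}=[0_{\rho(a)},\rho(b)]+[\rho(a),0_{\rho(b)}]$. The remaining bookkeeping is purely about idempotent absorption: $e_A\preceq 0_{\rho(a)}$ and $e_B\preceq 0_{\rho(b)}$, so by monotonicity \eqref{eq:ineq} we get $[e_A,\rho(b)]\preceq[0_{\rho(a)},\rho(b)]$ and $[\rho(a),e_B]\preceq[\rho(a),0_{\rho(b)}]$, and all four brackets are idempotents by Lemma~\ref{lemma-basicprop}(2); each comparable pair of idempotents then collapses under $+$ to the smaller one, producing exactly the right-hand side of \eqref{eq:moreover}. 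I expect the delicacy of the argument to lie entirely in tracking which idempotents get absorbed and in invoking \eqref{eq:Sminus} at the precise moment it is needed; this is also where one sees why the hypothesis on $S$ cannot be dispensed with, in light of Example~\ref{exHeisenberg}.
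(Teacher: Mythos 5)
Your proposal is correct and follows essentially the same route as the paper's proof: Lemma~\ref{welldefined} and Corollary~\ref{cor:infErho} for well-definedness and additivity, absorption of $0_{\rho(b)}$ by $\inf\E(\rho(B))$ via \eqref{eq-premorphism}, the expansion of $[\inf\E(\rho(A))+\rho(a),\inf\E(\rho(B))+\rho(b)]$ through (3), (4), (6) of Definition~\ref{def6}, and the decisive use of \eqref{eq:Sminus} to convert $0_{[\rho(a),\rho(b)]}$ into $[0_{\rho(a)},\rho(b)]+[\rho(a),0_{\rho(b)}]$ followed by idempotent absorption via \eqref{eq:ineq}. The only cosmetic difference is that your uniqueness argument packages $(A,a)=0_{\tau(x_1)}+\cdots+0_{\tau(x_n)}+\tau(a)$ in one identity, whereas the paper first splits off $\eta(A,0)$ and then evaluates it; the content is identical.
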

\begin{proof}
First observe that by Lemma~\ref{welldefined}, the mapping $\widetilde \rho$ is well defined. Now, let $(A,a),(B,b)\in E(L)$ and  $0\neq \alpha\in \F$. Then  $\widetilde\rho \circ \tau(a)=\widetilde\rho(\mathbb{F}a,a)= 
 \inf(\E(\rho(\F a)))
+\rho(a)=0_{\rho(a)}+\rho(a)=\rho(a),$ which shows \eqref{eq:unique}. 

We shall check next that $\widetilde\rho$ is linear.  As $\rho$ is a premorphism, $\rho(a)+\rho(b)=\rho(a+b)+0_{\rho(b)}$, which implies 
$$\inf\E(\rho(B))+\rho(a+b)=\inf\E(\rho(B))+\rho(a)+\rho(b),$$
and thus, using Corollary~\ref{cor:infErho}, 
\begin{align*}
\tilde{\rho} ((A,a)+(B,b))&=\tilde{\rho} (A+B,a+b)=\inf\E(\rho(A+B))+\rho(a+b)\\ &=\inf\E(\rho(A))+\inf\E(\rho(B))+\rho(a+b)\\   &=\inf\E(\rho(A))+\inf\E(\rho(B))+\rho(a)+\rho(b)\\
    &= \widetilde{\rho} (A,a)+ \widetilde{\rho}(B,b).
\end{align*} Furthermore,   
$$\widetilde\rho((A,\alpha a)=\inf\E(\rho(A))+\rho(\alpha a)=\alpha\inf\E(\rho(A))+\alpha\rho(a)=\alpha(\inf\E(\rho(A))+\rho(a))=\alpha(\widetilde\rho(A,a)),$$  as  $\alpha \neq 0.$ It follows by Remark~\ref{rem:linear} that  $\widetilde\rho $ is linear.

 In order to prove \eqref{eq:moreover} note,   keeping in mind  (4) and (6) of Definition~\ref{def6}, that
\begin{align*}
[\widetilde\rho (A,a),\widetilde\rho (B,b)]&=
   [ \inf\E(\rho(A)) + \rho (a), \inf\E(\rho(B)) + \rho (b)]\\ 
     &=\inf\E(\rho(A))+\inf\E(\rho(B)) + [\rho (a),\inf\E(\rho(B)) ]+ [\inf\E(\rho(A)), \rho(b)]+ [\rho(a),\rho(b)]=(*).
\end{align*}
Now, since $\rho$ is a premorphism  and $S$ satisfies~\eqref{eq:Sminus}, we have that  $$[\rho(a),\rho(b)]=[\rho(a),0_{\rho(b)}]+ [0_{\rho (a)}, \rho (y)]+\rho([a,b]).$$ 
Additionally,  by \eqref{eq:ineq} the inequalities $\inf \E(\rho(A))\preceq 0_{\rho(a)}$  and $\inf \E(\rho(B))\preceq 0_{\rho(b)}$ imply that 
$$[\inf \E(\rho(A)),\rho(b)]\preceq [0_{\rho(a)},\rho(b)] \quad \mbox{ and }\quad 
[\rho(a), \inf \E(\rho(B))]\preceq [\rho(a), 0_{\rho(b)}].$$ 
Therefore,   using again Corollary~\ref{cor:infErho}, 
\begin{align*}
    (*)&=\inf\E(\rho(A+B)) + [\rho (a),\inf\E(\rho(B)) ]+ [\inf\E(\rho(A)), \rho(b)]+[\rho(a),0_{\rho(b)}]+ [0_{\rho (a)}, \rho (y)]+\rho([a,b])\\
    &= \inf\E(\rho(A+B)) + [\rho (a),\inf\E(\rho(B)) ]+ [\inf\E(\rho(A)), \rho(b)]+\rho([a,b])\\
    &= \inf\E(\rho(A+B)) + 0_{\rho([a,b])} + \rho([a,b]) + [\rho (a),\inf\E(\rho(B)) ]+ [\inf\E(\rho(A)), \rho(b)]\\
     &= \inf\E(\rho(A+B + \F [a,b])) + \rho([a,b]) + [\rho (a),\inf\E(\rho(B)) ]+ [\inf\E(\rho(A)), \rho(b)]\\
    &= \widetilde\rho ([(A,a),(B,b)]) +[\rho (a),\inf\E(\rho(B)) ]+ [\inf\E(\rho(A)), \rho(b)],
\end{align*} 
 completing the proof of \eqref{eq:moreover}.

Now, let us address the assertion of uniqueness. For this suppose that  $\eta:E(L) \rightarrow S$ is a   linear mapping such that $\eta \circ \tau = \rho.$  In particular, we have that 
$\eta (0_{(A,a)})= \eta (0 (A,a))= 0 \eta  (A,a)= 0_{\eta(A,a)}$ for all $(A,a)\in E(L)$.  Since 
 $$\eta (A,a)=\eta(A,0)+\eta(\mathbb F a, a)=
\eta(A,0)+\rho(a)= \eta(A,0)+\widetilde\rho(\mathbb F a, a),$$ it is enough to verify that  $\eta (A,0)=\tilde{\rho}(A,0).$  Let $\beta$ be a finite set of generators of $A.$  Then, since   $\rho (0)=0,$ we obtain  
\begin{multline*}
 \eta (A,0)= \eta (\sum_{x\in \beta} \mathbb{F} x,0)=\sum_{x\in \beta} \eta (\mathbb{F}x,0)=
     \sum_{x\in \beta} \eta (0_{(\mathbb{F}x,x)})=\\
     =\sum_{x\in \beta} 0_{\eta (\mathbb{F}x,x)}=\sum_{x\in \beta} 0_{\rho(x)}=\inf\E(\rho(A))=\tilde\rho(A,0),  
\end{multline*}
as required. 
\end{proof}
\begin{cor}\label{cor:tilderho}
Suppose that $L$ is a Lie algebra, $S$  is a semilattice of Lie algebras,  $\rho :L \rightarrow S$  is a premorphism and   $\tau:L \rightarrow E(L)$  is  as defined in Theorem~\ref{prouniq}.  Then $\widetilde{\rho}:E(L) \rightarrow S$ is a homomorphism of Lie inverse semialgebras, and it is the unique homomorphism satisfying $\widetilde\rho \circ \tau=\rho$. 
\end{cor}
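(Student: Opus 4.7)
The plan is to derive the corollary from Theorem~\ref{prouniq} by exploiting the fact that $S$, being a semilattice of Lie algebras, satisfies especially rigid identities among its idempotents. First I would verify the hypothesis of Theorem~\ref{prouniq}, namely that $S$ satisfies~\eqref{eq:Sminus}. By Lemma~\ref{lemma2_SemilatticeAlg}, rewritten in Lie-bracket notation, $[x,0_y]=[0_x,y]=0_x+0_y$ for all $x,y\in S$; combined with $0_{[x,y]}=0_{x+y}=0_x+0_y$ this immediately yields $0_{[x,y]}=[0_x,y]+[x,0_y]$. Thus Theorem~\ref{prouniq} applies, furnishing the linear map $\widetilde\rho$, the identity $\widetilde\rho\circ\tau=\rho$, its uniqueness among linear maps of inverse semivector spaces, and the compatibility relation~\eqref{eq:moreover}.

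Linearity and $\widetilde\rho(0)=0$ are already guaranteed by Theorem~\ref{prouniq}, so the remaining task is to upgrade~\eqref{eq:moreover} to an equality $\widetilde\rho([(A,a),(B,b)])=[\widetilde\rho(A,a),\widetilde\rho(B,b)]$. The strategy is to show that the two correction terms $[\rho(a),\inf\E(\rho(B))]$ and $[\inf\E(\rho(A)),\rho(b)]$ collapse into an idempotent already absorbed by $\widetilde\rho([(A,a),(B,b)])$. Since $\inf\E(\rho(A))$ and $\inf\E(\rho(B))$ lie in $\E(S)$, Lemma~\ref{lemma2_SemilatticeAlg} gives $[\rho(a),\inf\E(\rho(B))]=0_{\rho(a)}+\inf\E(\rho(B))$, and symmetrically for the second bracket. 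Their sum is $0_{\rho(a)}+0_{\rho(b)}+\inf\E(\rho(A))+\inf\E(\rho(B))$, which by Corollary~\ref{cor:infErho} equals $0_{\rho(a)}+0_{\rho(b)}+\inf\E(\rho(A+B))$. Since $\inf\E(\rho(A+B))\preceq 0_{\rho(a)}$ and $\inf\E(\rho(A+B))\preceq 0_{\rho(b)}$ (both $a$ and $b$ lie in $A+B$, so $0_{\rho(a)}$ and $0_{\rho(b)}$ appear among the idempotent summands in Lemma~\ref{welldefined}), this simplifies to $\inf\E(\rho(A+B))$.

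It then remains to absorb $\inf\E(\rho(A+B))$ into $\widetilde\rho([(A,a),(B,b)])$. Since $A+B\subseteq A+B+\F[a,b]$, Corollary~\ref{cor:infErho} gives $\inf\E(\rho(A+B+\F[a,b]))=\inf\E(\rho(A+B))+0_{\rho([a,b])}\preceq\inf\E(\rho(A+B))$, so $\widetilde\rho([(A,a),(B,b)])=\inf\E(\rho(A+B+\F[a,b]))+\rho([a,b])$ already contains $\inf\E(\rho(A+B))$ as an idempotent summand, making the absorption automatic. Substituting back into~\eqref{eq:moreover} yields the required bracket identity, and $\widetilde\rho$ is a homomorphism of Lie inverse semialgebras. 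Uniqueness is immediate: any such homomorphism is in particular a linear map of inverse semivector spaces, and Theorem~\ref{prouniq} already guarantees that only one linear map satisfies $\widetilde\rho\circ\tau=\rho$. The main obstacle is really the middle step, where careful bookkeeping of the order $\preceq$ on the idempotents in $S$ is needed to recognize the correction terms as an absorbed element; once this reduction is in place the rest is formal.
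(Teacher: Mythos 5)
Your proposal is correct and follows essentially the same route as the paper: apply Theorem~\ref{prouniq}, compute the two correction terms in~\eqref{eq:moreover} via Lemma~\ref{lemma2_SemilatticeAlg}, and show they reduce to an idempotent already absorbed by $\widetilde{\rho}([(A,a),(B,b)])$, with uniqueness inherited from the linear case. Your explicit verification that a semilattice of Lie algebras satisfies~\eqref{eq:Sminus} is a detail the paper leaves implicit, but the argument is otherwise the same.
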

\begin{proof}
By Theorem~\ref{prouniq}, it remains only to show that $\widetilde\rho$ verifies  $[\tilde{\rho}(A,a),\tilde{\rho}(B,b)]=\tilde{\rho} ([(A,a),(B,b)])$. Since  $S$ is a semilattice of Lie algebras, it follows by Lemma~\ref{lemma2_SemilatticeAlg}  that 
$$[\rho (a),\inf\E(\rho(B)) ]+ [\inf\E(\rho(A)), \rho(b)]
= 0_{\rho (a)}+\inf\E(\rho(B))+ \inf\E(\rho(A))+0_{\rho(b)}=\inf\E(\rho(A))+\inf\E(\rho(B)). $$
Then  \eqref{eq:moreover} and   Corollary~\ref{cor:infErho}, imply
\begin{align*}
[\tilde{\rho}((A,a)),\tilde{\rho}((B,b))]&=\widetilde{\rho} ([(A,a),(B,b)]) +[\rho (a),\inf\E(\rho(B)) ]+ [\inf\E(\rho(A)), \rho(b)]\\&= \inf\E(\rho(A+B)) +\rho([a,b])+ \inf\E(\rho(A))+\inf\E(\rho(B)) 
\\&=\inf\E(\rho(A+B))+0_{\rho([a,b])} +\rho([a,b])\\
& =\inf\E(\rho(A+B+\F[a,b])) +\rho([a,b]) =\tilde{\rho} ([(A,a),(B,b)]),
\end{align*}
as required.
\end{proof}

\begin{cor}\label{cor:classes}
Assume that $\mathcal A$ is any of the following classes of nonassociative algebras: unital associative algebras, unital Jordan algebras, sympathetic Lie algebras, or semisimple Lie algebras.  Let $A\in \mathcal A$  and $L$ be a Lie algebra. Then, there exists a one-to-one correspondence between:
\begin{enumerate}
    \item The $\mathcal{A}$-partial actions of $L$ on $A$, and 
    \item the homomorphism of Lie inverse semialgebras  $ E(L)\to \pde_{\mathcal{A}}(A).$
\end{enumerate}
\end{cor}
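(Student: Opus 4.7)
The plan is to apply Corollary~\ref{cor:tilderho} to the Lie inverse semialgebra $\pde_{\mathcal{A}}(A)$, which is a semilattice of Lie algebras by Proposition~\ref{u1}. Since the $\mathcal{A}$-partial actions of $L$ on $A$ are by definition the premorphisms $\theta:L\to\pde(A)$ whose component maps have domains in $\mathcal{A}$, they correspond exactly to premorphisms of Lie inverse semialgebras $\theta:L\to\pde_{\mathcal{A}}(A)$. Thus the statement will reduce to establishing a bijection between premorphisms $L\to\pde_{\mathcal{A}}(A)$ and homomorphisms $E(L)\to\pde_{\mathcal{A}}(A)$.

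First, I would go from partial actions to homomorphisms. Given an $\mathcal{A}$-partial action $\theta$, view it as a premorphism $\theta:L\to\pde_{\mathcal{A}}(A)$ and apply Corollary~\ref{cor:tilderho} (which is applicable because $\pde_{\mathcal{A}}(A)$ is a semilattice of Lie algebras) to obtain a unique homomorphism $\widetilde{\theta}:E(L)\to\pde_{\mathcal{A}}(A)$ such that $\widetilde{\theta}\circ\tau=\theta$, where $\tau:L\to E(L)$ is the canonical premorphism of Lemma~\ref{tau}.

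Conversely, given a homomorphism $\psi:E(L)\to\pde_{\mathcal{A}}(A)$, define $\rho_\psi:=\psi\circ\tau:L\to\pde_{\mathcal{A}}(A)$. Since $\tau$ is a premorphism of Lie inverse semialgebras (by Lemma~\ref{tau}) and $\psi$ is a homomorphism, the composition $\rho_\psi$ is a premorphism (a routine check using items~(1)--(3) of Definition~\ref{def:premorphism}). As the image of $\rho_\psi$ lies in $\pde_{\mathcal{A}}(A)$, this premorphism yields an $\mathcal{A}$-partial action of $L$ on $A$.

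It remains to verify that these two assignments are mutually inverse. Starting from $\theta$, the relation $\widetilde{\theta}\circ\tau=\theta$ given by Corollary~\ref{cor:tilderho} shows that $\rho_{\widetilde{\theta}}=\theta$. Conversely, starting from a homomorphism $\psi$, the composition $\rho_\psi=\psi\circ\tau$ is a premorphism, and $\psi$ itself is a homomorphism $E(L)\to\pde_{\mathcal{A}}(A)$ satisfying $\psi\circ\tau=\rho_\psi$; by the uniqueness clause in Corollary~\ref{cor:tilderho}, we must have $\psi=\widetilde{\rho_\psi}$. This establishes the bijection. The only delicate point is checking that the hypotheses of Corollary~\ref{cor:tilderho} really are satisfied --- namely, that $\pde_{\mathcal{A}}(A)$ is a semilattice of Lie algebras for each of the four classes listed --- but this is precisely the content of Proposition~\ref{u1}, so no further work is needed.
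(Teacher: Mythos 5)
Your proof is correct and follows essentially the same route as the paper: identify $\mathcal{A}$-partial actions with premorphisms $L\to\pde_{\mathcal{A}}(A)$, invoke Proposition~\ref{u1} to see that $\pde_{\mathcal{A}}(A)$ is a semilattice of Lie algebras, and then use the existence and uniqueness in Corollary~\ref{cor:tilderho} to get the bijection, with $\psi\mapsto\psi\circ\tau$ as the inverse assignment. No gaps.
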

 \begin{proof} By definition, the $\mathcal{A}$-partial actions of $L$ on $A$ are  exactly the premorphisms of the form $\rho : E(L)\to \pde_{\mathcal{A}}(A).$ By Proposition~\ref{u1}, $\pde_{\mathcal{A}}(A)$ is a semilattice of Lie algebras, so that given such a $\rho, $  Corollary~\ref{cor:tilderho} gives us the unique 
homomorphism of Lie inverse semialgebras  $\widetilde\rho : E(L)\to \pde_{\mathcal{A}}(A)$ such that
 $\widetilde\rho \circ \tau=\rho$. On the other hand, for any
 homomorphism of Lie inverse semialgebras   
 $\varphi : E(L)\to \pde_{\mathcal{A}}(A)$ the composition $\rho = \varphi \circ \tau $ is obviously a premorphism. Hence  by the uniqueness property $\varphi = \widetilde{\rho} $ and we obtain the desired one-to-one correspondence.
\end{proof}

\begin{ex}
We will see that, in general, the map 
 $\widetilde{\rho}:E(L)\to S$ in Theorem~\ref{prouniq}, is not a homomorphism of Lie inverse semialgebras.
To see this, we refer to the well-known example by Jacobson involving a Lie algebra with an ideal that is not invariant under derivations, see~\cite[p. 75]{Ja}. Let us consider a field $\F$ with characteristic  $p$ greater than 2, and let $A$ be the commutative associative algebra  with basis  $\{1,z\ldots,z^{p-1}\}$, where $z^p=0$. Now, let $S$ be any simple Lie algebra, and put $L=S\otimes A$, where the bracket in $L$ is given by $[s\otimes a,t\otimes b]=[s,t]\otimes ab.$ Consider the ideal $I=S\otimes
 \text{span}\{z,\ldots,z^{p-1}\}$ of $L$ and the derivation $d:L\to L$ given by $d(s\otimes z^i)= s \otimes iz^{i-1}$, for $i=1,\ldots,p-1$. Then, we have that 
$$I_d:=I\cap  d^{-1} (I) =\{s\otimes a\in I\mid d(s\otimes a)\in I\}=S\otimes  \text{span}\{z^2,\ldots,z^{p-1}\}.$$
The restriction $\rho=\{\rho_h: I_h\to I\mid h\in \der(L)\}$ of the action of $\der(L)$ on $L$ to $I$ is a non-global partial action of $\der(L)$ on $I$. Consequently, the mapping $\rho: \der(L)\to \pde(I)$, defined by $h\mapsto \rho_h$, is a premorphism. We claim that the induced mapping   $\widetilde\rho: E(\der(L))\to \pde(I),$ given by $ \widetilde\rho(A,a)=\inf\E(\rho(A))+\rho(a),$  is not a homomorphism. Indeed, if $\widetilde\rho$ were a homomorphism, then 
$$[d|_{I_d},d|_{I_d}]=[\rho_d,\rho_d]=[\widetilde\rho(\F d,d),\widetilde\rho(\F d,d)]=\widetilde\rho(\F d,0) =0_{\rho_d},$$
which implies that the domains of the maps $[d|_{I_d},d|_{I_d}]$ and $d|_{I_d}$ must be equal, i.e. $d^{-1}(I_d)\cap I_d=I_d$. However, since  $d(s\otimes z^2)= 2s\otimes  z\notin I_d$ for $s\neq 0$, we find $s\otimes z^2\in I_d\backslash d^{-1}(I_d).$ Therefore, $\widetilde\rho:E(\der(L))\to \pde(I)$ is not a homomorphism of Lie inverse semialgebras.
\end{ex}

\section{  $F$-inverse Lie semialgebras and partial representations}\label{s: F-inverse}

\subsection{Recalling $F$-inverse semigroups}

An interesting class of inverse semigroups close to groups is formed by $F$-inverse semigroups, i.e.   inverse semigroups each  $\sigma $-class of which contains a greatest element. An important fact due to M. Szendrei states that there is an adjunction between the category of groups and the category of $F$-inverse semigroups, with those their morphisms which preserve greatest elements  of  $\sigma $-classes  \cite{Szendrei}. It is also known that the latter category is equivalent with the category of the so-called $F$-pairs and their morphisms (see \cite[VII.6]{Petrich1984}).   It is easy to see  that an $F$-pair can be identified with what we call a unital partial action of a group on a meet semilattice. For the reader's convenience we recall  the  following: 

\begin{defn}\label{def:ParGrAcOnSet}(see \cite{Exel1998} and \cite{ExelBook})   Let $G$ be a group with identity element $1_G$ and $\X$ be a set.  A partial action ${\0}$ of
$G$ on  $\X$ is a collection of  subsets ${\D}_g \subseteq \X \; (g \in G)$ and bijections
$ {\0}_g : {\D}_{g\m} \to  {\D}_g$ such that for all $g,h \in G$ the following conditions are satisfied:\\

 (i) $\D_{1_G} = \X$ and ${\0}_{1_G}$ is the identity map of $\X$;

 (ii) $\D_{(gh)\m} \supseteq {\0}\m_h({\D}_h \cap {\D}_{g\m})$;

 (iii) ${\0}_g \circ {\0}_h (x) = {\0}_{gh}(x)$ for each $x \in {\0}\m_h ({\D}_h \cap {\D}_{g\m})$.\\
\end{defn}

Note that conditions (ii) and (iii) mean that the function ${\0}_{gh}$ is an extension of the function ${\0}_g \circ
{\0}_h $. Moreover, $\0\m _g = \0_{g\m}$ and it is easily seen that  (ii) can be replaced by a ``stronger looking'' condition: 
\begin{equation}\label{eq:StrongerLooking}
{\0}\m_h({\D}_h \cap
{\D}_{g\m}) = \D_{h\m} \cap \D_{h\m g\m},
\end{equation} for all $g,h\in G$ (see \cite{ExelBook}). If $\X $ is a semigroup, then we assume that each $\X _g$ is an ideal in $\X $ and each $\0 _g$ is an isomorphism of semigroups. We say that the partial action $G$ on a semigroup $\X $ is unital if each $\D _g$ is a monoid (note that we use the term ``unital'' for partial actions in a different sense than the authors of \cite{KL}).

Let now $\0 $ be a unital partial action of a group $G$ on a meet semillatice  $\Lambda = (\Lambda , \leq).$ Denote by $e_g$ the identity element of the monoid $\D _g, $  $(g\in G).$ In particular, $e_{1_G}$ is the identity element of $\D _{1_G}= \Lambda , $  which we denote by $\varepsilon .$ Observe that \eqref{eq:StrongerLooking} can be rewritten as 
${\0}_g({\D}_{g\m} \cap
{\D}_{h}) = \D_{g} \cap \D_{gh},$ which implies that 
\begin{equation}\label{eq:IdempotentsRule}
\0_g(e_{g\m} \wedge e_h) = e_g \wedge e_{gh},
\end{equation} for all $g,h \in G.$

Let $\0=\{\0_g: \D_{g^{-1}}\to \D_g\mid g\in G, \D_g\subseteq \Lambda\}$ and $\widetilde\0=\{\widetilde\0_g:\widetilde\D_{h^{-1}}\to \widetilde\D_h\mid h\in H, \widetilde\D_h\subseteq \widetilde\Lambda\}$ be unital partial actions of the groups $G$ and $H$ on the semilattices $\Lambda$ and $\widetilde\Lambda$, respectively. A morphism between $\0$ and $\widetilde\0$ is a pair $(\theta,\phi)$, where $\psi: G\to H$ is a group homomorphism and $\phi: \Lambda\to \widetilde\Lambda$ is a monoid homomorphims, which for every $g\in G$ satisfy
$$\psi(\D_g)\subseteq \widetilde\D_{\theta(g)} \quad\mbox{ and }\quad \psi\circ \0_g=\widetilde\0_{\phi(g)}\circ \psi \mbox{ on } \D_{g^{-1}}.$$
Evidently, the collection of unital partial actions of groups on semilattices, together with their morphisms, form a category.

\begin{defn}\label{d: partial representation}
    Let \(G\) be a group and \(\Lambda\) be semilattice with unit.  A {\em unital partial representation} of \(G\) into \(\Lambda\) is a map \(\pi: G \to \en(\Lambda)\) such that for all \(g, h \in G\), the following conditions are satisfied:
    \begin{enumerate}
        \item $\im \pi_g$ is a unital ideal of $\Lambda$; 
        \item \(\pi_{1_G}\) is the identity map on \(\Lambda\); 
        \item \(\pi_{g^{-1}}\pi_{g}\pi_{h} = \pi_{g^{-1}}\pi_{gh}\);
        \item \(\pi_{g}\pi_{h}\pi_{h^{-1}} = \pi_{gh}\pi_{h^{-1}}\).
    \end{enumerate}
\end{defn}

Given a unital partial representation  \( \pi: G \to \en(\Lambda) \), for the sake of simplicity, we will denote \(\pi_g \pi_{g^{-1}}\) by \(\chi_g\), for $g\in G$. Note that  $\chi_g$ is idempotent, and which satisfies $\chi_g\pi_g=\pi_g\chi_{g^{-1}}=\pi_g$. Also, an element $\lambda\in \Lambda$ lies in $\im\pi_g$ if and only if $\chi_g(\lambda)=\lambda$.  Conditions (2) and (3) in the definition above can be rewritten as:
\[
    \pi_g \pi_h = \pi_{gh} \chi_{h^{-1}} \quad \text{and} \quad \pi_g \pi_h = \chi_g \pi_{gh}.
\]
Moreover, for all \( \lambda,\mu \in \Lambda \) and \( g, h \in G \), the following useful relation holds:
\begin{equation} \label{eq: relations of pi}
    \pi_g(\lambda) \wedge\pi_h(\mu) = \pi_g(\lambda\wedge \pi_{g^{-1}h}(\mu)).
\end{equation}
To verify this, observe that
\[
    \pi_g(\lambda \wedge\pi_{g^{-1}h}(\mu)) = \pi_g(\lambda)\wedge \pi_g(\pi_{g^{-1}h}(\mu)) = \chi_g \pi_g(\lambda)\wedge \chi_g(\pi_h(\mu)) = \chi_g(\pi_g(\lambda)\wedge \pi_h(\mu)) = \pi_g(\lambda) \wedge\pi_h(\mu).
\]

Let $\pi: G\to \en(\Lambda)$ and $\xi: H\to \en(\Pi)$ be unital partial representations of the groups $G$ and $H$ into the unital semilattices $\Lambda$ and $\Pi$, respectively. A pair $(\theta,\phi)$, where $\theta:\Lambda\to \Pi$ is a monoid homomorphism and $\phi:G\to H$ is a group homomorphism, is a homomorphism between $\pi$ and $\xi$ if for all $g\in G$ and $\lambda\in \Lambda$ we have 
$$\theta(\pi_g(\lambda))=\xi_{\phi(g)}(\theta(\lambda)).$$
Clearly, unital partial representations with their homomorphisms form a category. 
\begin{rem}\label{r: partial actions induced by partial representations}
    Partial actions and partial representations are closely related. In fact, if $\Lambda$ is a semilattice with a unit and $\pi: G \to \operatorname{End}(\Lambda)$ is a unital partial representation, we can define a unital partial action $\theta$ by setting $\D_g = \operatorname{im} \pi_g$ and $\theta_g = \pi_g|_{\D_{g^{-1}}}$.  

Conversely, if $\theta$ is a unital partial action of $G$ on $\Lambda$, then the mapping $\pi: G \to \operatorname{End}(\Lambda)$ given by  
\[
\pi_g(\lambda) = \theta_g(e_{g^{-1}} \wedge \lambda)
\]
defines a unital partial representation. Moreover, morphisms between unital partial representations naturally induce morphisms between the corresponding unital partial actions, and vice versa. Hence, this construction establishes a one-to-one correspondence between unital partial representations of a group in unital semilattices and unital partial actions on such semilattices, establishing an isomorphism of categories between them, which can be readily verified.
\end{rem}

\begin{lem}\label{l: f-pairs} Let \(G\) be a group and \((\Lambda,\leq)\) be semilattice with unit $\varepsilon$.  A  mapping \(\pi: G \to \en(\Lambda)\) is a unital partial representation if and only if  for any $g,h\in G$ and $\lambda\in \Lambda$ we have:
\begin{enumerate}
\item $\pi_{1_G}$ is the identity map on $\Lambda$; and 
\item $\pi_g\pi_h(\lambda)=\pi_g(\varepsilon)\wedge \pi_{gh}(\lambda).$
\end{enumerate}
\end{lem}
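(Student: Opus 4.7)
The plan is to establish both directions by pivoting on the observation that $\chi_g:=\pi_g\pi_{g^{-1}}$ is an idempotent semilattice endomorphism of $\Lambda$ which, when the image of $\pi_g$ is a unital ideal with top $e_g=\pi_g(\varepsilon)$, acts on $\Lambda$ as ``meet with $e_g$''. The identity in (2) is essentially an unpacking of this observation.

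For the forward direction, I would first record the standard consequences of Definition~\ref{d: partial representation}, namely that $\chi_g$ is idempotent, that $\chi_g\pi_g=\pi_g\chi_{g^{-1}}=\pi_g$, and that $\pi_g\pi_h=\chi_g\pi_{gh}$ (the last following by applying $\pi_g$ to (3) and using $\chi_g\pi_g=\pi_g$). Next I would identify the top of $\im\pi_g$: since $\pi_g$ is a semilattice endomorphism and $\lambda\wedge\varepsilon=\lambda$ for every $\lambda$, we get $\pi_g(\lambda)\leq\pi_g(\varepsilon)$, so $e_g=\pi_g(\varepsilon)$ is indeed the greatest element of the unital ideal $\im\pi_g$. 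The key lemma I would then prove is the purely semilattice fact: if $\chi$ is an idempotent semilattice endomorphism of $\Lambda$ whose image is an ideal of $\Lambda$ with top $e$, then $\chi(\mu)=e\wedge\mu$ for every $\mu\in\Lambda$; this is immediate because $e\wedge\mu\in\im\chi$ (ideal), hence $\chi(e\wedge\mu)=e\wedge\mu$, while $\chi(\mu)\leq e$ gives $\chi(\mu)=\chi(e)\wedge\chi(\mu)=\chi(e\wedge\mu)$. Applying this to $\chi_g$ and $\mu=\pi_{gh}(\lambda)$ yields $\pi_g\pi_h(\lambda)=\chi_g(\pi_{gh}(\lambda))=e_g\wedge\pi_{gh}(\lambda)$, which is (2).

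For the converse, I would set $e_g:=\pi_g(\varepsilon)$ and first extract from (2) (taking $h=g^{-1}$, respectively $h=1_G$) the two basic identities $\pi_g\pi_{g^{-1}}(\lambda)=e_g\wedge\lambda$ and $\pi_g(\mu)=e_g\wedge\pi_g(\mu)$. The latter shows $\pi_g(\mu)\leq e_g$, so $e_g$ is the top of $\im\pi_g$; combined with the former, for any $\pi_g(\mu)\in\im\pi_g$ and $\lambda\in\Lambda$ we have $\pi_g(\mu)\wedge\lambda=e_g\wedge(\pi_g(\mu)\wedge\lambda)=\pi_g\pi_{g^{-1}}(\pi_g(\mu)\wedge\lambda)\in\im\pi_g$, so $\im\pi_g$ is a unital ideal, proving (1) of Definition~\ref{d: partial representation}. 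Conditions (3) and (4) of that definition are then checked by routine manipulations using (2) in both possible directions: for (3), applying $\pi_{g^{-1}}$ to (2) and using $\pi_{g^{-1}}(e_g)=\pi_{g^{-1}}\pi_g(\varepsilon)=e_{g^{-1}}\wedge\varepsilon=e_{g^{-1}}$ together with $\pi_{g^{-1}}\pi_{gh}(\lambda)=e_{g^{-1}}\wedge\pi_h(\lambda)$ collapses both sides to $e_{g^{-1}}\wedge\pi_h(\lambda)$; for (4), one computes $\pi_g\pi_h\pi_{h^{-1}}(\lambda)=\pi_g(e_h\wedge\lambda)=\pi_g(e_h)\wedge\pi_g(\lambda)$ and notes that $\pi_g(e_h)=\pi_g\pi_h(\varepsilon)=e_g\wedge e_{gh}$, so meeting with $\pi_g(\lambda)\leq e_g$ yields $e_{gh}\wedge\pi_g(\lambda)=\pi_{gh}\pi_{h^{-1}}(\lambda)$.

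The only step that requires some thought is the auxiliary semilattice lemma identifying an idempotent endomorphism with unital ideal image as meet-with-top; everything else is a direct unpacking of the identity in (2) together with the semilattice-endomorphism property of each $\pi_g$. I would present the semilattice lemma explicitly (as a short paragraph) before invoking it in the forward direction, and organize the converse as four short computations corresponding to the four axioms.
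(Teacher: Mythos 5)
Your proposal is correct and follows essentially the same route as the paper: both directions hinge on the fact that $\chi_g=\pi_g\pi_{g^{-1}}$ acts on $\Lambda$ as meet with $\pi_g(\varepsilon)$ (the unit of the ideal $\im\pi_g$), followed by routine verifications of the axioms. The only organizational difference is that you isolate this as an explicit auxiliary semilattice lemma, whereas the paper invokes its previously displayed relation $\pi_g(\lambda)\wedge\pi_h(\mu)=\pi_g(\lambda\wedge\pi_{g^{-1}h}(\mu))$; the content is the same.
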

\begin{proof}
First suppose that \(\pi: G \to \en(\Lambda)\) is a unital partial representation. We need to verify conditon (2) above. From~\eqref{eq: relations of pi} we have that 
$$\pi_g (\varepsilon)\wedge\pi_{gh}(\lambda)=\pi_g(\varepsilon\wedge\pi_{h}(\lambda))=\pi_g\pi_{h}(\lambda).$$

Conversely, suppose that \(\pi: G \to \en(\Lambda)\) verifies conditions (1) and (2). Then it is easy to see that $\im\pi_g=\{\lambda\in \Lambda\mid \lambda\leq \pi_g(\varepsilon) \}$, and hence it is an ideal of $\Lambda$ with unit $\pi_g(\varepsilon).$ Now, we see that
$$\pi_{g^{-1}}\pi_{gh}(\lambda)=\chi_{g^{-1}}(\varepsilon\wedge \pi_{g^{-1}}\pi_{gh}(\lambda))=\chi_{g^{-1}}(\varepsilon)\wedge \pi_{g^{-1}}\pi_{gh}(\lambda)=\pi_{g^{-1}}(\pi_g(\varepsilon)\wedge \pi_{gh}(\lambda))=\pi_{g^{-1}}\pi_g\pi_h(\lambda),$$
showing (3) of Definition~\ref{d: partial representation}. Finally, keeping in mind equation~\eqref{eq: relations of pi} we obtain
$$\pi_{gh}\pi_{h^{-1}}(\lambda)=\pi_{gh}(\varepsilon)\wedge \pi_g(\lambda)=\pi_g(\lambda\wedge \pi_h(\varepsilon))=\pi_g(\pi_h\pi_{h^{-1}}(\lambda)),$$
which shows (4) of Definition~\ref{d: partial representation}.
\end{proof}

  Lemma~\ref{l: f-pairs} states that unital partial representations of groups in unital meet semilattices are precisely the $F$-pairs as defined by M. Petrich in~\cite[Section VII.6]{Petrich1984}.  For the reader’s convenience, we recall an important result from~\cite{Petrich1984}, which establishes an equivalence between the category of $F$-pairs (unital partial representations) and the category of $F$-inverse semigroups whose morphisms are homomorphisms which preserve the greatest elements of $\sigma$-classes. To this end, we first recall the notion of an $F$-inverse semigroup.  Given an inverse semigroup $S$,  the congruence $\sigma$ is defined by setting $s \,\sigma\, t$ if and only if there exists an idempotent $e$ such that $se = te$. An inverse semigroup $S$ is called \emph{$F$-inverse} if every $\sigma$-class $\overline{s} \in S/\sigma$ has a greatest element.  

\begin{thm}[\cite{Petrich1984}] \label{th: equivalence F-pairs}
The category of $F$-pairs (unital partial representations) is equivalent to the category of $F$-inverse semigroups and their homomorphisms which preserve the greatest elements of $\sigma$-classes.
\end{thm}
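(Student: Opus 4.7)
The plan is to exhibit mutually quasi-inverse functors $\Phi$ and $\Psi$ between the two categories. The functor $\Phi$ sends an $F$-inverse semigroup $S$ to the data $G = S/\sigma$, $\Lambda = \E(S)$ and $\pi : G \to \en(\Lambda)$ defined by $\pi_g(e) = m_g\, e\, m_g\m$, where $m_g$ denotes the greatest element of the $\sigma$-class $g$. Since all idempotents of an inverse semigroup lie in a single $\sigma$-class and $S$ is $F$-inverse, $\E(S)$ is a meet semilattice with identity $\varepsilon$ (the greatest idempotent of $S$). Verifying that $\pi$ is a unital partial representation reduces, via Lemma~\ref{l: f-pairs}, to checking $\pi_{1_G} = \mathrm{id}_{\Lambda}$ (immediate from $m_{1_G} = \varepsilon$) and $\pi_g \pi_h(\lambda) = \pi_g(\varepsilon) \wedge \pi_{gh}(\lambda)$; this latter identity follows from the relations $m_g m_h \,\sigma\, m_{gh}$ and $m_g m_h \preceq m_{gh}$, which combined force $m_g m_h = (m_g m_g\m)\, m_{gh}$ in $S$.

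In the other direction, $\Psi$ sends a unital partial representation $\pi : G \to \en(\Lambda)$ to the set
\[
S(\pi) = \{(g, \lambda) \in G \times \Lambda : \lambda \leq \pi_g(\varepsilon)\},
\]
equipped with the multiplication $(g, \lambda)(h, \mu) = (gh, \, \pi_g(\pi_{g\m}(\lambda) \wedge \mu))$. Associativity follows from a direct calculation exploiting identity~\eqref{eq: relations of pi} together with conditions (3)--(4) of Definition~\ref{d: partial representation}; the inverse of $(g, \lambda)$ is $(g\m, \pi_{g\m}(\lambda))$, and the idempotents of $S(\pi)$ are precisely the pairs $(1_G, \lambda)$. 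Hence the $\sigma$-classes of $S(\pi)$ coincide with the fibres of the projection $(g, \lambda) \mapsto g$, each with greatest element $(g, \pi_g(\varepsilon))$, so $S(\pi)$ is $F$-inverse. On morphisms, a compatible pair $(\psi, \phi)$ between unital partial representations induces $(g, \lambda) \mapsto (\psi(g), \phi(\lambda))$, which evidently preserves greatest elements of $\sigma$-classes; the converse assignment, sending an $F$-inverse morphism to its induced pair on quotients and idempotents, is equally routine.

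Finally, one produces natural isomorphisms $\Phi\Psi \cong \mathrm{id}$ and $\Psi\Phi \cong \mathrm{id}$. For the first, the canonical bijections $S(\pi)/\sigma \cong G$ via $(g, \lambda) \mapsto g$ and $\E(S(\pi)) \cong \Lambda$ via $(1_G, \lambda) \mapsto \lambda$ transport $\Phi(\Psi(\pi))$ back onto $\pi$. For the second, the required isomorphism $S \to \Psi(\Phi(S))$ is $s \mapsto (\overline{s}, s s\m)$; this is bijective because every element of an $F$-inverse semigroup factors uniquely as $s = m_{\overline{s}} \cdot (s\m s)$, and the multiplication on $S$ is then recovered verbatim from the data in $\Phi(S)$. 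The main obstacle will be simultaneously pushing through the associativity of the product on $S(\pi)$ and verifying that the bijection $s \mapsto (\overline{s}, s s\m)$ is a semigroup homomorphism; both hinge on controlling how $m_g m_h$ relates to $m_{gh}$ through $\sigma$ and the natural order, and both become manageable by systematically invoking Lemma~\ref{l: f-pairs} rather than wrestling with the four axioms of Definition~\ref{d: partial representation} directly.
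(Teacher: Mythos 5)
Your proposal is correct and takes essentially the same route as the source: the paper states this theorem only as a citation to Petrich \cite[VII.6]{Petrich1984}, and its own Lie analogue (Theorem~\ref{pro-equivalence}) is proved with precisely your pair of functors --- $K(S)=(\E(S),S/\sigma)$ with the action given through $m_{\overline{s}}$, the semigroup $\{(g,\lambda):\lambda\le\pi_g(\varepsilon)\}$ on the other side, and the unit $s\mapsto(\overline{s},ss\m)$ matching the paper's $\gamma_S(s)=(0_s,\overline{s})$. The one thin spot is your claim that $m_gm_h\,\sigma\,m_{gh}$ together with $m_gm_h\preceq m_{gh}$ already ``force'' $m_gm_h=(m_gm_g\m)m_{gh}$: these give only $m_gm_h=(m_gm_h)(m_gm_h)\m m_{gh}$, and to replace the idempotent factor one must additionally observe that $m_g\m m_{gh}\le m_h$, whence $m_gm_g\m m_{gh}m_{gh}\m\le(m_gm_h)(m_gm_h)\m$ and the two idempotents agree below $m_{gh}m_{gh}\m$ --- a standard but non-vacuous step.
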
 
As an immediate consequence of Theorem ~\ref{th: equivalence F-pairs} and Remark~\ref{r: partial actions induced by partial representations} we have the following: 
\begin{cor} The category of unital partial action of groups on unital semillatices is equivalent to the category of $F$-inverse semigroups and their homomorphisms which preserve the greatest elements of $\sigma$-classes.
\end{cor}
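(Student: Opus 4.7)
The plan is simply to compose the two category-level correspondences already at our disposal. By Remark~\ref{r: partial actions induced by partial representations}, the assignment $\pi \mapsto \theta$ defined by $\D_g = \im \pi_g$ and $\theta_g = \pi_g|_{\D_{g^{-1}}}$, together with its inverse $\theta \mapsto \pi$ given by $\pi_g(\lambda) = \theta_g(e_{g^{-1}} \wedge \lambda)$, already yields an isomorphism of categories between unital partial representations of groups in unital meet semilattices and unital partial actions of groups on unital semilattices, compatible with morphisms in both directions. By Lemma~\ref{l: f-pairs}, unital partial representations coincide with $F$-pairs in the sense of Petrich.

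Thus the plan is: first invoke Remark~\ref{r: partial actions induced by partial representations} to rewrite the category on the left-hand side of the desired equivalence as the category of unital partial representations, i.e.\ the category of $F$-pairs via Lemma~\ref{l: f-pairs}. Then apply Theorem~\ref{th: equivalence F-pairs}, which identifies this category with that of $F$-inverse semigroups whose morphisms preserve the greatest elements of $\sigma$-classes. Composing an isomorphism of categories with an equivalence yields an equivalence, which gives exactly the claim.

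There is essentially no obstacle to overcome: everything needed is already packaged in the cited Remark, Lemma and Theorem. The only point one might wish to emphasize explicitly is that the isomorphism in Remark~\ref{r: partial actions induced by partial representations} is functorial on morphisms -- namely, a pair $(\psi,\phi)$ of a group homomorphism and a monoid homomorphism intertwines $\pi$ and $\xi$ in the sense of partial representations if and only if it intertwines the associated partial actions $\theta$ and $\widetilde{\theta}$ -- but this follows at once from the mutually inverse formulas $\pi_g(\lambda) = \theta_g(e_{g^{-1}} \wedge \lambda)$ and $\theta_g = \pi_g|_{\D_{g^{-1}}}$, together with the fact that $\phi$ preserves meets and the units $e_{g^{-1}}$. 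Hence the proof reduces to a one-line composition of functors, with no further verification required.
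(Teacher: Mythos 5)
Your proposal is correct and matches the paper exactly: the authors also derive this corollary as an immediate consequence of Theorem~\ref{th: equivalence F-pairs} combined with the category isomorphism of Remark~\ref{r: partial actions induced by partial representations} (with Lemma~\ref{l: f-pairs} identifying unital partial representations with Petrich's $F$-pairs). Your added remark about checking functoriality of the correspondence on morphisms is a reasonable point of care, but it is already asserted in the cited Remark, so nothing further is needed.
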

One of the main objective in the next subsections is to prove a Lie analogue of the Theorem~\ref{th: equivalence F-pairs}. To this end, we adapt the notions of unital partial repesentation and $F$-inverse semigroup to the Lie context.


\subsection{Partial representations} 
To avoid certain anomalies, we assume for the remainder of the paper that the base field $\F$ is of characteristic different from 2.

We start introducing the notion of a (unital) partial representation of a vector space $V$ in a unital semilattice  $\Lambda$. This is defined as a (unital) partial representation $\pi: V \to \en(\Lambda)$ of the additive group $(V, +)$ into $\Lambda$ together with an additional condition. More precisely we give the following:

\begin{defn}\label{defaction_of_L}
Let $V$ be a vector space and  $\Lambda = (\Lambda , \leq)$ be a meet semilattice with unit $\varepsilon$. A {\em  partial representation} of $V$ in $\Lambda$ is a mapping $V\times \Lambda\to \Lambda, (a,\lambda)\mapsto a \cdot \lambda$, which verifies the following axioms for all $a,b\in V, 0\neq\alpha\in\F$ and $\lambda,\mu\in \Lambda$:
\begin{enumerate}
    \item $0\cdot \lambda=\lambda$,
    \item \label{item 2}$a\cdot(\lambda\wedge \mu)=a\cdot\lambda \wedge a\cdot\mu$,
    \item $a\cdot(b\cdot \lambda)=a\cdot\varepsilon \wedge (a+b)\cdot\lambda$,
    \item $(\alpha a)\cdot\lambda=a\cdot \lambda$.
\end{enumerate}
\end{defn}

A partial representation of a Lie algebra $L$ in a meet semilattice $\Lambda$, with unit $\varepsilon$, is a  partial representation of the vector space $L$ in $\Lambda$. We shall denote this partial representation by $(\Lambda, L)$. 

Given two partial representations $(\Lambda,L)$ and $(\Pi,H)$, a morphism from $(\Lambda,L)$ to $(\Pi,H)$ is defined as a pair $(\theta,\phi)$, where $\theta:\Lambda\to \Pi$ is a monoid homomorphism and $\phi:L\to H$ is a Lie algebra homomorphism satisfying
$$\theta(a\cdot \lambda)=\phi(a)\cdot \theta(a)$$
for every $a\in L$ and $\lambda\in \Lambda.$ It is easy to see that the partial representations of Lie algebras in unital meet semilattices form a category, which we denote by $\mathcal{I}.$ 

The following result addresses some elementary properties of partial representations that will be used throughout this section.
\begin{lem}\label{lemma-fp}
Let $(\Lambda,L)$ be a partial representation. Then, for any $\lambda,\mu\in \Lambda$ and $a,b\in L$, the following holds:
\begin{enumerate}
 \item $a\cdot(a\cdot \lambda)=a\cdot\lambda$ . 
\item $a\cdot\lambda\wedge a\cdot\mu=a\cdot \lambda\wedge \mu=\lambda\wedge a\cdot \mu$   
   \item $a\cdot\varepsilon \wedge (a+b)\cdot\lambda =b\cdot\varepsilon \wedge (a+b)\cdot\lambda$
    \item  $a\cdot \lambda\leq \lambda$ for all $\lambda\in \Lambda$ and $a\in L$.
    \item The action of $L$ on $\Lambda$ is compatible with the order of $\Lambda$, that is, $\lambda\leq \mu$ implies  $a\cdot \lambda\leq a\cdot \mu$ for all $\lambda,\mu\in \Lambda$. In particular, $a\cdot \lambda\leq a\cdot \varepsilon$ for all $\lambda\in \Lambda$.
    \item $\lambda\leq a\cdot \varepsilon $ if and only if $a\cdot\lambda=\lambda.$
    \item If $a\in \text{span}\{x_1,\ldots,x_k\}$, then
\begin{equation}\label{eq-action}
    x_1\cdot(x_2\cdots(x_k\cdot \varepsilon )\cdots )=a\cdot \varepsilon\wedge x_1\cdot\varepsilon\wedge\cdots \wedge x_k\cdot \varepsilon=a\cdot(x_1\cdot(x_2\cdots(x_k\cdot \varepsilon  )  \cdots )).
\end{equation}
In particular, $x_1\cdot(x_2\cdots(x_k\cdot \varepsilon)\cdots)$ is invariant under any permutation of $x_1,\ldots,x_k$.
\end{enumerate}
\end{lem}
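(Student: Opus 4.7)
My plan is to first establish the fundamental identity
\[ a \cdot \lambda = (a \cdot \varepsilon) \wedge \lambda \]
for all $a \in L$ and $\lambda \in \Lambda$, from which nearly all of items (1)--(6) fall out at once. To derive it, I would first combine axiom (3) with $b = 0$ and axiom (1) to get $a \cdot \lambda = a \cdot (0 \cdot \lambda) = (a \cdot \varepsilon) \wedge (a \cdot \lambda)$, so $a \cdot \lambda \le a \cdot \varepsilon$. Next, axiom (3) with $b = a$ yields $a \cdot (a \cdot \lambda) = (a \cdot \varepsilon) \wedge ((2a) \cdot \lambda)$; here I would invoke axiom (4) with $\alpha = 2$ (this is where $\mathrm{char}\,\F \ne 2$ enters) to rewrite $(2a) \cdot \lambda = a \cdot \lambda$, giving $a \cdot (a \cdot \lambda) = (a \cdot \varepsilon) \wedge (a \cdot \lambda) = a \cdot \lambda$, which is item (1). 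On the other hand, axiom (3) with $b = -a$ together with axiom (4) applied with $\alpha = -1$ produce $a \cdot (a \cdot \lambda) = a \cdot ((-a) \cdot \lambda) = (a \cdot \varepsilon) \wedge (0 \cdot \lambda) = (a \cdot \varepsilon) \wedge \lambda$. Comparing the two expressions for $a \cdot (a \cdot \lambda)$ yields the identity.

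With the identity in hand, items (2), (4), (5), (6) are immediate. Item (4) is just $a \cdot \lambda = (a \cdot \varepsilon) \wedge \lambda \le \lambda$. For item (2), $(a \cdot \lambda) \wedge \mu = (a \cdot \varepsilon) \wedge \lambda \wedge \mu = a \cdot (\lambda \wedge \mu)$, which by axiom (2) equals $(a \cdot \lambda) \wedge (a \cdot \mu)$; a symmetric computation handles $\lambda \wedge (a \cdot \mu)$. For item (5), if $\lambda \le \mu$ then $a \cdot \lambda = a \cdot (\lambda \wedge \mu) = (a \cdot \lambda) \wedge (a \cdot \mu) \le a \cdot \mu$. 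Item (6) reads $\lambda \le a \cdot \varepsilon \iff \lambda = (a \cdot \varepsilon) \wedge \lambda \iff \lambda = a \cdot \lambda$. For item (3), applying the identity twice gives $a \cdot (b \cdot \lambda) = (a \cdot \varepsilon) \wedge (b \cdot \varepsilon) \wedge \lambda$, which is symmetric in $a$ and $b$, hence equal to $b \cdot (a \cdot \lambda)$; but axiom (3) independently rewrites these as $(a \cdot \varepsilon) \wedge ((a+b) \cdot \lambda)$ and $(b \cdot \varepsilon) \wedge ((a+b) \cdot \lambda)$, forcing the equality claimed in (3).

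The only nontrivial remaining point is item (7), and this is where I expect the main obstacle, though the difficulty is mostly one of bookkeeping. I plan to prove two parallel descriptions of the nested expression $y_1 \cdot (y_2 \cdots (y_k \cdot \varepsilon) \cdots)$ by induction on $k$: iterating the fundamental identity gives $y_1 \cdot (y_2 \cdots (y_k \cdot \varepsilon) \cdots) = \bigwedge_{i=1}^k (y_i \cdot \varepsilon)$, while iterating axiom (3) gives $y_1 \cdot (y_2 \cdots (y_k \cdot \varepsilon) \cdots) = \bigwedge_{i=1}^k \bigl((y_1 + \cdots + y_i) \cdot \varepsilon\bigr)$. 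Writing $a = \sum_{i=1}^k \alpha_i x_i$, I would first reduce to the case where every $\alpha_i \ne 0$ (since dropping indices with $\alpha_i = 0$ only enlarges both sides of the desired comparison, noting $a \cdot \varepsilon \le \varepsilon$), and then apply both descriptions to the sequence $y_i = \alpha_i x_i$; axiom (4) identifies each $\alpha_i x_i \cdot \varepsilon$ with $x_i \cdot \varepsilon$, while the last term in the second description is precisely $a \cdot \varepsilon$. Comparing the two descriptions then forces $\bigwedge_i (x_i \cdot \varepsilon) \le a \cdot \varepsilon$, which gives the first equality of (7) (combined with the first description, applied with $y_i = x_i$). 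The second equality follows from the fundamental identity applied with $\lambda$ set to the nested expression itself, and permutation invariance is automatic once the common value is displayed as the meet $a \cdot \varepsilon \wedge \bigwedge_{i=1}^k (x_i \cdot \varepsilon)$.
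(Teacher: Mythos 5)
Your proof is correct, and it is organized somewhat differently from the paper's. You isolate the single identity $a\cdot\lambda=(a\cdot\varepsilon)\wedge\lambda$ at the outset, deriving it by comparing axiom~(3) with $b=a$ (exactly the paper's computation for item~(1)) against axiom~(3) with $b=-a$; the paper never states this identity explicitly, instead reaching the same information through item~(2), which it proves by arguing that $(a\cdot\lambda)\wedge\mu$ lies in the ideal $a\cdot\Lambda$ whose unit is $a\cdot\varepsilon$. The two routes use the same axioms in essentially the same ways, but yours buys a genuinely cleaner proof of item~(3): you observe that $a\cdot(b\cdot\lambda)=(a\cdot\varepsilon)\wedge(b\cdot\varepsilon)\wedge\lambda$ is symmetric in $a$ and $b$ and then read off the claim from the two applications of axiom~(3), whereas the paper runs a more involved computation applying $(a+b)\cdot{}$ to $(-a)\cdot\varepsilon\wedge(a+b)\cdot\lambda$. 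For item~(7) your argument is also more explicit than the paper's terse ``apply (3) and (4) repeatedly'': the two parallel inductive descriptions of the nested expression, namely $\bigwedge_i(y_i\cdot\varepsilon)$ and $\bigwedge_i\bigl((y_1+\cdots+y_i)\cdot\varepsilon\bigr)$, applied to $y_i=\alpha_ix_i$, cleanly yield the inequality $\bigwedge_i(x_i\cdot\varepsilon)\leq a\cdot\varepsilon$ that the paper leaves implicit, and your reduction to nonzero coefficients correctly handles the fact that axiom~(4) only applies for $\alpha\neq 0$. The only cosmetic quibble is the phrase ``enlarges both sides'' in that reduction --- only the left-hand meet grows when indices are dropped --- but the logic ($\mathrm{LHS}\leq\mathrm{LHS}'\leq\mathrm{RHS}$) is sound.
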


\begin{proof}
To see (1),  we have  from (3) and (4) of Definition~\ref{defaction_of_L} that
$$a\cdot(a\cdot \lambda)=a\cdot\varepsilon\wedge (2a)\cdot \lambda=a\cdot\varepsilon\wedge a\cdot \lambda=a\cdot \lambda,$$
as $a\cdot \varepsilon$ is the unit of the ideal $a\cdot\Lambda:=\{a\cdot \lambda\mid \lambda\in \Lambda\}.$  For item (2), observe that since $a \cdot\lambda \wedge \mu$ belongs to the ideal $a \cdot \Lambda$, the previous item implies  
$$
a \cdot \lambda \wedge \mu = a \cdot (a \cdot \lambda \wedge \mu) = (a \cdot a \cdot \lambda) \wedge (a \cdot \mu) = a \cdot \lambda \wedge a \cdot \mu.
$$ 
The second equality follows analogously.

For item~(3), note that since $a\cdot\varepsilon \wedge (a+b)\cdot\lambda\in (a+b)\cdot \Lambda$, we have
\begin{multline*}
a\cdot\varepsilon \wedge (a+b)\cdot\lambda =(a+b)\cdot((-a)\cdot\varepsilon \wedge (a+b)\cdot\lambda)=(a+b)\cdot( (-a)\cdot\varepsilon) \wedge (a+b)\cdot\lambda=\\
=(a+b)\cdot \varepsilon\wedge b\cdot \varepsilon\wedge (a+b)\cdot \lambda= b\cdot \varepsilon\wedge (a+b)\cdot \lambda,
\end{multline*}
as $(a+b)\varepsilon$ is the unit of $(a+b)\cdot \Lambda.$

Statements (4)-(6) follow directly from the itens (1)-(3). To see (7), suppose that $a=\alpha_1x_1+\cdots +\alpha_kx_k$ for some $\alpha_i\in \F$, then applying  repeatedly  (3) and (4) from Definition~\ref{defaction_of_L} we obtain 
$$x_1\cdot(x_2\cdots(x_k\cdot \varepsilon  )\cdots )=a\cdot \varepsilon\wedge (\alpha_1x_1)\cdot\varepsilon\wedge\cdots \wedge (\alpha_kx_k)\cdot \varepsilon=a\cdot \varepsilon\wedge x_1\cdot\varepsilon\wedge\cdots \wedge x_k\cdot \varepsilon,$$
showing the first equality. Now, as $0\in span\{a,x_1,\ldots,x_k\}$, it follows that 
$$a\cdot(x_1\cdot(x_2\cdots(x_k\cdot \varepsilon  ) \cdots ))=0\cdot\varepsilon\wedge a\cdot \varepsilon\wedge x_1\cdot\varepsilon\wedge\cdots \wedge x_k\cdot \varepsilon=a\cdot \varepsilon\wedge x_1\cdot\varepsilon\wedge\cdots \wedge x_k\cdot \varepsilon,$$
which shows the second equality.
\end{proof}

Let $(\Lambda,L)$ be a partial representation, and let $A$ be a finite-dimensional subspace of $L$. We use  $A\cdot \varepsilon$ to denote the set $\{a\cdot \varepsilon\mid a\in A\}$. For a  finite set of generators  $\beta=\{x_1,\ldots,x_k\}$ of $A$, we define  $$\beta\circ \varepsilon:=x_1\cdot(x_2\cdots(x_k\cdot \varepsilon )\cdots ).$$ Note that from equation~\eqref{eq-action} we have that $\beta\circ\varepsilon=a\cdot \varepsilon\wedge x_1\cdot\varepsilon\wedge\cdots \wedge x_k\cdot \varepsilon$ for any $a\in A$. In particular, taking $a=x_i$ for some $i$, we have $\beta\circ \varepsilon=x_1\cdot\varepsilon\wedge\cdots \wedge x_k\cdot \varepsilon$.

\begin{lem}\label{lemma:F-pair}
Let $(\Lambda,L)$ be a partial representation, $A$ be a finite-dimensional subspace of $L$, and $\beta=\{x_1,\ldots,x_k\}$ a finite set of generators of $A$. Then
$$\inf A\cdot\varepsilon=\beta\circ\varepsilon=x_1\cdot\varepsilon\wedge\cdots \wedge x_k\cdot \varepsilon.$$
In particular, $\beta\circ \varepsilon:=
x_1\cdot(x_2\cdots(x_k\cdot \varepsilon ) \cdots )$ does not depend on the choice of the  finite set of generators $\beta$ of $A$. 
\end{lem}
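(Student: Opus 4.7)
The proof plan is essentially to harvest the work already done in Lemma~\ref{lemma-fp}(7), so it should be short and contain no surprises. My approach is to split the claim into three pieces: $(\mathrm{i})$ the second equality $\beta\circ\varepsilon=x_1\cdot\varepsilon\wedge\cdots\wedge x_k\cdot\varepsilon$; $(\mathrm{ii})$ the fact that this element is a lower bound of $A\cdot\varepsilon$; and $(\mathrm{iii})$ the fact that it is the greatest lower bound. The independence from the choice of generators then falls out automatically.

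For $(\mathrm{i})$, I would simply specialize equation~\eqref{eq-action} of Lemma~\ref{lemma-fp}(7), which states
\[
x_1\cdot(x_2\cdots(x_k\cdot\varepsilon)\cdots)=a\cdot\varepsilon\wedge x_1\cdot\varepsilon\wedge\cdots\wedge x_k\cdot\varepsilon
\]
for \emph{any} $a\in A=\mathrm{span}\{x_1,\dots,x_k\}$. Choosing $a=x_1$ (say) makes the first term redundant (since $x_1\cdot\varepsilon$ already appears on the right), giving $\beta\circ\varepsilon=x_1\cdot\varepsilon\wedge\cdots\wedge x_k\cdot\varepsilon$.

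For $(\mathrm{ii})$, the same equation with an arbitrary $a\in A$ shows $\beta\circ\varepsilon\le a\cdot\varepsilon$, so $\beta\circ\varepsilon$ is a lower bound of $A\cdot\varepsilon$. For $(\mathrm{iii})$, if $\mu\in\Lambda$ satisfies $\mu\le a\cdot\varepsilon$ for every $a\in A$, then in particular $\mu\le x_i\cdot\varepsilon$ for each $i=1,\dots,k$, hence $\mu\le x_1\cdot\varepsilon\wedge\cdots\wedge x_k\cdot\varepsilon=\beta\circ\varepsilon$ by the meet property. Thus $\beta\circ\varepsilon=\inf A\cdot\varepsilon$.

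The ``in particular'' statement is then immediate: since $\inf A\cdot\varepsilon$ is determined entirely by the subspace $A$ (and the semilattice structure of $\Lambda$), the value $\beta\circ\varepsilon$ cannot depend on the particular finite generating set $\beta$ chosen. There is no real obstacle in this argument; the whole content was packaged into Lemma~\ref{lemma-fp}(7), and the present lemma is a clean reformulation of it in the language of infima.
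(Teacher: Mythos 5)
Your proposal is correct and follows essentially the same route as the paper: the identity $\beta\circ\varepsilon=x_1\cdot\varepsilon\wedge\cdots\wedge x_k\cdot\varepsilon$ via equation~\eqref{eq-action} with a suitable choice of $a$, the lower-bound property from the same equation with arbitrary $a\in A$, and the greatest-lower-bound property by comparing any other lower bound against the generators. Nothing is missing.
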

\begin{proof}
Let $a\in A$. Then, by equation~\eqref{eq-action}, we have  
$$\beta\circ \varepsilon=a\cdot \varepsilon\wedge x_1\cdot\varepsilon\wedge\cdots \wedge x_k\cdot \varepsilon\leq a\cdot \varepsilon,$$
which implies that $\beta\circ \varepsilon$ is a lower bound of $A\cdot \varepsilon$. Now, assume that $d\in \Lambda$ is another lower bound of $A\cdot \varepsilon$. In particular, $d\leq x_i\cdot \varepsilon$ for all $i=1,\ldots, k$; and this implies $d\leq \beta\circ \varepsilon$. Hence $\beta\circ\varepsilon$ is the greatest lower bound of $A\cdot \varepsilon$. 
\end{proof}
\begin{prop}\label{prop-inf} The following holds.
\begin{enumerate}
    \item If $A,B$ are finite-dimensional subspaces of $L$, then $$\inf(A+B)\cdot \varepsilon=\inf A\cdot \varepsilon\wedge \inf B\cdot \varepsilon.$$
    \item For any  $a\in L$ we have that $$a\cdot \varepsilon \wedge \inf A\cdot \varepsilon =\inf (A+\F a)\cdot \varepsilon=a\cdot (\inf A\cdot \varepsilon).$$
\end{enumerate}
\end{prop}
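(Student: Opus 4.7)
The plan is to deduce both statements directly from Lemma~\ref{lemma:F-pair}, which identifies $\inf A\cdot\varepsilon$ with the iterated action $\beta\circ\varepsilon$ over any finite generating set, and equivalently with the plain meet $x_1\cdot\varepsilon\wedge\cdots\wedge x_k\cdot\varepsilon$.

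For part (1), I would fix finite generating sets $\{x_1,\dots,x_k\}$ of $A$ and $\{y_1,\dots,y_m\}$ of $B$, and observe that their union generates $A+B$. Applying Lemma~\ref{lemma:F-pair} to this combined generating set identifies $\inf(A+B)\cdot\varepsilon$ with the meet $x_1\cdot\varepsilon\wedge\cdots\wedge x_k\cdot\varepsilon\wedge y_1\cdot\varepsilon\wedge\cdots\wedge y_m\cdot\varepsilon$. Since meets are associative and commutative, splitting this expression into two groups and applying Lemma~\ref{lemma:F-pair} once to each recovers $\inf A\cdot\varepsilon\wedge \inf B\cdot\varepsilon$.

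For part (2), the first equality is a special case of (1): axiom~(4) of Definition~\ref{defaction_of_L} yields $\inf(\F a)\cdot\varepsilon = a\cdot\varepsilon$, so applying (1) with $B=\F a$ gives $\inf(A+\F a)\cdot\varepsilon = \inf A\cdot\varepsilon\wedge a\cdot\varepsilon$. For the second equality, I would again pick a finite generating set $\{x_1,\dots,x_k\}$ of $A$; then $\{a,x_1,\dots,x_k\}$ generates $A+\F a$, and a direct application of Lemma~\ref{lemma:F-pair} to this generating set gives
\[\inf(A+\F a)\cdot\varepsilon \;=\; a\cdot\bigl(x_1\cdot(x_2\cdots(x_k\cdot\varepsilon)\cdots)\bigr) \;=\; a\cdot(\inf A\cdot\varepsilon),\]
where in the last step the inner iterated action is identified with $\inf A\cdot\varepsilon$ via a second application of Lemma~\ref{lemma:F-pair}.

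I do not expect any genuine obstacle: the substantive content, namely the well-definedness of $\beta\circ\varepsilon$ and its coincidence with $\inf A\cdot\varepsilon$, is already packaged in Lemma~\ref{lemma:F-pair}. The only care needed is checking degenerate situations (e.g.\ $a\in A$ or $A=0$), but these are absorbed transparently by the generating-set formulation, so the argument is essentially a bookkeeping exercise of combining or extending generating sets.
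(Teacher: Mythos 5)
Your proposal is correct and follows essentially the same route as the paper: both parts are reduced to Lemma~\ref{lemma:F-pair} by combining generating sets (the union of generating sets of $A$ and $B$ for part (1), and the extended set $\{a,x_1,\dots,x_k\}$ for part (2)), with the first equality of (2) obtained as a special case of (1). No gaps.
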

\begin{proof}
(1) Suppose  that $\beta=\{x_1,\ldots,x_k\}$ generates  $A$ and $\beta'=\{y_1,\ldots,y_l\}$ generates $B$. Then $\beta\cup \beta'$ generates $A+B$  and, using Lemma~\ref{lemma:F-pair}, we obtain 
\begin{align*}
\inf A\cdot \varepsilon\wedge \inf B\cdot \varepsilon &=x_1\cdot \varepsilon\wedge\cdots\wedge x_k\cdot \varepsilon \wedge y_1\cdot \varepsilon\wedge\cdots \wedge y_l\cdot \varepsilon= \inf (A+B)\cdot \varepsilon.
\end{align*}

(2) The first equality follows from item (1). For the second equality suppose that $\{x_1,\ldots,x_k\}$ generates $A$. Then $\{a,x_1,\ldots,x_k\}$ generates $A+\F a$, and thus, using again Lemma~\ref{lemma:F-pair}, we conclude that
$$\inf (A+\F a)\cdot \varepsilon=a\cdot(x_1\cdot(x_2\cdots(x_k\cdot \varepsilon))\cdots))=a\cdot(\inf A\cdot \varepsilon),$$
as required.
\end{proof}

\subsection{$F$-inverse Lie semialgebras}\label{s: F-inverse Lie semialgebras}
Let $S$ be a Lie inverse semialgebra. Inspired by the concept of a congruence on semigroups (see~\cite{Lawson,Petrich1984}), we define a {\em  congruence} on $S$ as an equivalence relation $\omega $ such that $(s,t)\in\omega $  implies that $(r+s,r+t), ([r,s],[r,t])$ and $(\alpha s,\alpha t)$ also belong to $\omega$ for all $r\in S$ and $\alpha\in \F$. It is straightforward to see that if $\omega $ is a congruence on $S$, then the quotient $S/\omega$ is a Lie inverse semialgebra with the natural operations, and the natural projection $\omega^\#: S\to S/\omega$ is a homomorphism of Lie inverse semialgebras. 

On $S$ we define the relation $\sigma $ by setting  $s\,\sigma\, t$ if and only if there exists $e\in \E(S)$ such that $s+e=t+e$. It is easy to see that $\sigma$ is a congruence and that $S/\sigma$ is a Lie algebra. Moreover, if $\omega $ is another congruence on $S$ such that $S/\omega $ is a Lie algebra, then $\sigma\subseteq \omega .$ We will refer to $\sigma$ as the {\em minimum Lie congruence } on $S$.  If $\phi:S\to T$ is a homomorphism between  Lie inverse semialgebras, there exists a unique Lie algebra homomorphism  $\phi^{\sigma} : S/\sigma \to T/\sigma $ such that $\sigma^{\#}\circ \phi=\phi^\sigma\circ \sigma^{\#}$. This means that
$\phi^{\sigma} (\overline{s}) = \overline{\phi (s)}, $ where
$\overline{s}$ stands for the $\sigma$-class of $s\in S$  and $\overline{\phi (s)}$ for that of $\phi (s).$
Furthermore, clearly, $(\phi\circ \psi)^\sigma=\phi^\sigma\circ \psi^\sigma.$

 Adapting the notion of an $F$-inverse semigroup to the Lie context, we say that a Lie inverse semialgebra $S$ is  {\em  $F$-inverse} if every $\sigma$-class $\overline{s}\in S/\sigma$ has a greatest element $m_{\overline{s}}$ and  for every $s,t\in S$, the following condition holds:
\begin{equation}\label{f-inverse}
    [s,t]=m_{\overline{[s,t]}}+0_{s+t}.
\end{equation}
 Observe that the fact that  $m_{\overline{s}}$  is the greatest element of the $\sigma$-class $\overline{s}$ means that  $\overline{s}=
\E (S) + m_{\overline{s}},$ i.e. $\overline{s}$ is the  principal order ideal generated by $ m_{\overline{s}}$ in  the additive inverse semigroup  $(S,+)$ considered as a partially ordered set.  Moreover,
$$m_{\overline{\alpha s}} = \alpha m_{\overline{s}} \; \;  \text{and} \; \; 
m_{\overline{s}} + m_{\overline{t}} \leq m_{\overline{s+t}},$$
for all $s,t \in S$ and $\alpha \in \F .$ Since $m_{\overline{0}}=0,$ it follows that the map 
$(S/\sigma , +) \to (S,+)$ given by 
$\overline{s}\mapsto m_{\overline{s}}$ is premorphism of additive inverse monoids. In particular, since  \eqref{eq-premorphism} is a property of a premorphism of inverse monoids, we have that
\begin{equation}\label{eq:S/SigmaPremorph}
m_{\overline{s}} + m_{\overline{t}} = 0_{m_{\overline{s}}} + m_{\overline{s+t}} = 0_{m_{\overline{t}}} + m_{\overline{s+t}},
\end{equation} for all $s,t \in S.$

\begin{ex}
A pivotal example of a Lie $F$-inverse semialgebra is the Lie inverse semialgebra $E(L)$ associated with any Lie algebra $L$ (see Section~\ref{s: preliminaries}).   Specifically, for $(A,a)$ and $(B,b)$ in $E(L)$, it can be verified that $(A,a)\sigma (B,b)$  if and only if $a=b$. Consequently, $(\F a,a)$ is the greatest element for the $\sigma$-class of  $(A,a)$, and the equality  
$$(A+B+\F [a,b], [a,b])=(\F [a,b],[a,b])+(A+B,0)$$
implies that $E(L)$ is an Lie $F$-inverse semialgebra. 
\end{ex}
Let  $(\Lambda,L)$ be a partial representation. We define  
$$F(\Lambda,L)=\{(\lambda,a)\in \Lambda\times L\mid \lambda\leq a\cdot \varepsilon\},$$  endowed with the following operations:
$$(\lambda,a)+(\mu,b)=(\lambda\wedge \mu,a+b)\quad\quad \quad \alpha(\lambda,a)=(\lambda,\alpha a) \quad\quad\quad [(\lambda,a),(\mu,b)]=([a,b]\cdot (\lambda\wedge \mu),[a,b])$$
for all $\alpha\in \F, \lambda,\mu\in \Lambda$ and $a,b\in L$. 

\begin{prop}
    $F(\Lambda,L)$ is a Lie $F$-inverse semialgebra.
\end{prop}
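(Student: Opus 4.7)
The plan is to verify successively the inverse semivector space axioms, the six Lie inverse semialgebra axioms of Definition~\ref{def6}, and the $F$-inverse property. First I would check that the three operations keep us inside $F(\Lambda,L)$. For the sum, Proposition~\ref{prop-inf}(1) applied to $A=\F a$ and $B=\F b$ gives $a\cdot\varepsilon\wedge b\cdot\varepsilon=\inf(\F a+\F b)\cdot\varepsilon\leq(a+b)\cdot\varepsilon$, so $\lambda\wedge\mu\leq(a+b)\cdot\varepsilon$. Scalar multiplication is well defined by axiom~(4) of Definition~\ref{defaction_of_L}, and the bracket by Lemma~\ref{lemma-fp}(5). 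The additive inverse of $(\lambda,a)$ is $(\lambda,-a)$, hence $0_{(\lambda,a)}=(\lambda,0)$, $\E(F(\Lambda,L))=\Lambda\times\{0\}$, and $(\lambda,a)\preceq(\mu,b)$ iff $a=b$ and $\lambda\leq\mu$. The remaining scalar axioms of Definition~\ref{def2} are routine.

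For the Lie inverse semialgebra structure, the main obstacle will be axiom~(2). The second components of both sides agree because $[a,b+c]=[a,b]+[a,c]$ in $L$, but on the first components I must show $\xi:=[a,b]\cdot(\lambda\wedge\mu)\wedge[a,c]\cdot(\lambda\wedge\nu)\leq[a,b+c]\cdot(\lambda\wedge\mu\wedge\nu)$. The idea is to observe that $\xi\leq\lambda\wedge\mu\wedge\nu$ and $\xi\leq[a,b]\cdot\varepsilon\wedge[a,c]\cdot\varepsilon\leq[a,b+c]\cdot\varepsilon$ by Proposition~\ref{prop-inf}(1); therefore by Lemma~\ref{lemma-fp}(6) we get $\xi=[a,b+c]\cdot\xi$, and by Lemma~\ref{lemma-fp}(5) this is bounded above by $[a,b+c]\cdot(\lambda\wedge\mu\wedge\nu)$. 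For Jacobi~(5), the second component of $J(x,y,z)$ vanishes by the Jacobi identity in $L$, while each of the three first components is $\leq\lambda\wedge\mu\wedge\nu$ by Lemma~\ref{lemma-fp}(4), so $J(x,y,z)\preceq(\lambda\wedge\mu\wedge\nu,0)=0_{x+y+z}$. Axioms~(1) and~(3) reduce immediately to $(\alpha a)\cdot\kappa=a\cdot\kappa$ and the antisymmetry of the bracket in $L$; axiom~(4) follows from $0\cdot\kappa=\kappa$ combined with Lemma~\ref{lemma-fp}(2); axiom~(6) is a direct computation using $[0,0]=0$ and $0\cdot(\lambda\wedge\mu)=\lambda\wedge\mu$.

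Finally I would handle the $F$-inverse property by first identifying the $\sigma$-relation: given an idempotent $(\rho,0)$, the equality $(\lambda,a)+(\rho,0)=(\mu,b)+(\rho,0)$ becomes $a=b$ and $\lambda\wedge\rho=\mu\wedge\rho$, and taking $\rho=\lambda\wedge\mu$ yields $(\lambda,a)\,\sigma\,(\mu,b)\iff a=b$. Hence the $\sigma$-class of $(\lambda,a)$ admits the greatest element $m_{\overline{(\lambda,a)}}=(a\cdot\varepsilon,a)$. To verify~\eqref{f-inverse}, I compute
\[
m_{\overline{[(\lambda,a),(\mu,b)]}}+0_{(\lambda,a)+(\mu,b)}=([a,b]\cdot\varepsilon,[a,b])+(\lambda\wedge\mu,0)=([a,b]\cdot\varepsilon\wedge\lambda\wedge\mu,\,[a,b]),
\]
and apply Lemma~\ref{lemma-fp}(2) twice (once as $[a,b]\cdot\varepsilon\wedge\mu=[a,b]\cdot\mu$, then $[a,b]\cdot\mu\wedge\lambda=[a,b]\cdot(\lambda\wedge\mu)$) to rewrite the first component as $[a,b]\cdot(\lambda\wedge\mu)$, which matches $[(\lambda,a),(\mu,b)]$. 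This completes the verification.
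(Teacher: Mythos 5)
Your proposal is correct and follows essentially the same route as the paper's proof: the same reduction of the work to axioms (2) and (5) of Definition~\ref{def6}, the same identification of the $\sigma$-classes and their greatest elements $(a\cdot\varepsilon,a)$, and the same verification of \eqref{f-inverse} via Lemma~\ref{lemma-fp}(2). The only cosmetic differences are that you invoke Proposition~\ref{prop-inf}(1) where the paper runs a direct chain through Lemma~\ref{lemma-fp}(6) for closure under addition, and your estimate for axiom (2) passes through $\xi\leq[a,b+c]\cdot\varepsilon$ and then applies Lemma~\ref{lemma-fp}(6), whereas the paper uses axiom (3) of Definition~\ref{defaction_of_L} directly — both rest on the same lemmas.
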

    
\begin{proof}
First, we will see that the above operations are well-defined. Suppose that $(\lambda,a),(\mu,b)\in F(\Lambda,L).$  Then, using (6) of Lemma~\ref{lemma-fp}, we have
$$\lambda\wedge\mu=\lambda\wedge\mu\wedge \varepsilon=a\cdot\lambda\wedge b\cdot\mu\wedge \varepsilon
 =a\cdot\lambda\wedge \mu\wedge b\cdot \varepsilon=
\lambda\wedge\mu\wedge a\cdot(b\cdot \varepsilon)=\lambda\wedge\mu\wedge a\cdot\varepsilon\wedge(a+b)\cdot \varepsilon\leq (a+b)\cdot \varepsilon,$$
which implies $(\lambda,a)+(\mu,b)\in F(\Lambda,L)$. Next, as $\lambda\wedge \mu\leq \varepsilon$, by (5) of Lemma~\ref{lemma-fp}, we have  $[a,b]\cdot (\lambda\wedge \mu)\leq [a,b]\cdot \varepsilon$, which ensures that $[(\lambda,a),(\mu,b)]\in F(\Lambda,L).$ Furthermore,  for any $\alpha\in \F$ we have  $(\alpha a)\cdot \varepsilon=a\cdot \varepsilon\geq \lambda$, showing that $\alpha(\lambda,a)\in F(\Lambda,L)$. Hence, the operations defined on $F(\Lambda,L)$ are indeed well-defined. It is also straightforward to verify that $F(\Lambda,L)$ is a commutative inverse semigroup, whose set of (additive)  idempotents is given by $\E(F(\Lambda,L))=\{(\lambda,0)\mid \lambda\in \Lambda\}$.  Note that $(\lambda , a) \preceq (\mu , b)$ if and only if $a=b$ and $\lambda \leq \mu.$

Next, we need to verify that conditions (1)–(6) of Definition~\ref{def6} are satisfied. However, since the other conditions are straightforward, we will focus only on conditions (2) and (5). To see (2) let $(\lambda,a),(\mu,b),(\nu,c)\in F(\Lambda,L)$. Then we have 
$$[(\nu,c),(\lambda\wedge \mu,a+b)]=([c,a+b]\cdot \nu\wedge \lambda\wedge\mu,[c,a+b]),$$
and 
$$[(\nu,c),(\lambda,a)]+ [(\nu,c),(\mu,b)]=([c,a]\cdot \nu\wedge\lambda\wedge [c,b]\cdot\nu \wedge \mu,[c,a+b])=(\nu\wedge[c,a]\cdot\lambda\wedge [c,b]\cdot\mu,[c,a+b]).$$
As $([c,a]+[c,b])\cdot \nu\wedge \lambda\wedge\mu\geq [c,a]\cdot \varepsilon \wedge ([c,a]+[c,b])\cdot \nu\wedge \lambda\wedge\mu=[c,a]\cdot([c,b]\cdot \nu\wedge\lambda\wedge \mu)=\nu\wedge[c,a]\cdot\lambda\wedge [c,b]\cdot\mu$, we conclude that $[(\nu,c),(\lambda,a)+(\mu,b)]\succeq [(\nu,c),(\lambda,a)]+ [(\nu,c),(\mu,b)],$  which gives (2).

To prove (5) note that using (4) of Lemma~\ref{lemma-fp}, we have $[a,[b,c]]\cdot(\lambda\wedge [b,c]\cdot \mu\wedge\nu) \leq \lambda\wedge\mu\wedge \nu$, and then
$$[(\lambda,a),[(\mu,b),(\nu,c)]]=([a,[b,c]]\cdot(\lambda\wedge [b,c]\cdot \mu\wedge\nu),[a,[b,c]])\preceq( \lambda\wedge\mu\wedge \nu,[a,[b,c]]).$$ Thus 
    $$J((\lambda,a),(\mu,b),(\nu,c))\preceq (\lambda\wedge\mu\wedge \nu,J(a,b,c))=(\lambda\wedge\mu\wedge \nu,0)=0_{(\lambda,a)+(\mu,b)+(\nu,c)}.$$
Therefore, $F(\Lambda,L)$ is a Lie inverse semialgebra.

It remains to  prove that  $F(\Lambda,L)$ is  $F$-inverse. For note that $(\lambda,a)\sigma(\mu,b)$ if and only if $a=b$. Hence $F(\Lambda,L)/\sigma=
\{\overline{(\cdot,a)}\mid a\in L\}\cong L$. Since $\lambda\leq a\cdot\varepsilon$ for every $(\lambda,a)\in F(\Lambda,L)$, it follows that 
 $\overline{(\cdot,a)}= 
((a\cdot \varepsilon) \wedge \Lambda, a) $ and that $(a\cdot\varepsilon,a)$ is the greatest element of the $\sigma$-class $\overline{(\cdot,a)}.$ Finally, the equality $$([a,b]\cdot \varepsilon,[a,b])+(\lambda\wedge \mu,0)=([a,b]\cdot \varepsilon\wedge \lambda\wedge\mu,[a,b])=([a,b]\cdot (\lambda\wedge \mu),[a,b])=[(\lambda,a),(\mu,b)]$$
shows that the equation~\eqref{f-inverse} holds for $F(\Lambda,L)$. So, $F(\Lambda,L)$ is indeed an Lie $F$-inverse semialgebra.
\end{proof}

\begin{ex}\label{ex-section4}
Let $L$ be a Lie algebra, and denote by $P_f(L)$ the set of all finite-dimensional subspaces of $L$. Then $(P_f,+)$ is a meet semilattice whose unit is the subspace $0$. It is straightforward to see that the map  $L\times P_f(L) \to P_f(L)$ defined by $a\cdot A=A+\F a$ gives a partial representation $(P_f(L),L)$. A simple calculation shows that $F(P_f(L),L)$ coincides with the Lie $F$-inverse semialgebra $E(L)$.
\end{ex}

In what follows,  $\mathcal{F}$ denotes the category whose objects are the Lie $F$-inverse semialgebras, and whose morphism are the homomorphisms which map the greatest element of a $\sigma$-class to the greatest element of a $\sigma$-class.

Suppose that $(\theta,\phi):(\Lambda,L)\to (\Pi,H)$ is a morphism of partial representations. Then, it is straightforward to verify that the mapping $F(\theta,\phi): F(\Lambda,L)\to F(\Pi,H)$, defined by $(\lambda,a)\mapsto (\theta(\lambda),\phi(a))$, is a homomorphism of Lie $F$-inverse semialgebras. Furthermore, for any $a\in L$ we have  
$$F(\theta,\phi)(a\cdot\varepsilon,a)= (\theta(a\cdot \varepsilon ),\phi(a))=(\phi(a)\cdot \varepsilon,\phi(a)),$$ which shows that $F(\theta,\phi)$ sends the greatest element of a $\sigma$-class to the greatest element of a corresponding $\sigma$-class.
 Thus we obtain a functor $F:\mathcal{I} \to \mathcal{F}$ sending $(\Lambda,L)$ to $F(\Lambda,L)$ and $(\theta,\phi)$ to $F(\theta,\phi)$. Our  objective is to show that the category $\mathcal{I}$, of partial representations, is equivalent to the category $\mathcal{F}$. To achieve this, we need to construct a functor in the opposite direction. 

Let $S$ be a Lie  $F$-inverse semialgebra, and let $m_{\overline{s}}$ denote the greatest element of the $\sigma$-class $\overline{s}$. Consider the Lie algebra $S/\sigma$, and the meet semilattice $\E(S)$ of (additive) idempotents of $S$. Then, it is easy to see that a mapping $S/\sigma\times \E(S)\to \E(S)$ defined by 
$$\overline{s}\cdot \lambda=0_{m_{\overline{s}}}+\lambda,$$
 gives a partial representation $(\E(S), S/\sigma)$. Indeed, conditions (1), (2) and (4) of 
 Definition~\ref{defaction_of_L} are straightforward, whereas (3) is a consequence of 
 \eqref{eq:S/SigmaPremorph}. Moreover, if $\varphi: S\to T$ is a homomorphism of Lie $F$-inverse semialgebras  which maps the greatest element of a $\sigma$-class to the greatest element of a $\sigma$-class, then the pair $(\theta,\varphi ^{\sigma})$,  where $\theta=\varphi|_{\E(S)}$ and $\varphi ^{\sigma}: S/\sigma\to T/\sigma $ is as defined above, 
  is a morphism in $\mathcal{I}.$ Thus, we obtain a functor $K:\mathcal{F}\to \mathcal{I}$ setting $K(S)=(\E(S),S/\sigma)$ and $ K(\varphi)= (\theta,\varphi ^{\sigma}).$

For every $(\Lambda,L)\in \mathcal{I}$ we define the mapping $(\xi_\Lambda,\eta_L): (\Lambda,L)\to KF(\Lambda,L)$, where $\xi_\Lambda(\lambda)=(\lambda,0)$ and $\eta_L(a)=\overline{(\cdot,a)}$. On the opposite direction, for $S\in \mathcal{F}$ we define the mapping $\gamma_S: S\to FK(S)$ by $\gamma_S(s)=(0_s,\overline{s}).$ The following theorem shows that the  mappings $(\xi_\Lambda,\eta_L): (\Lambda,L)\to KF(\Lambda,L)$ and $\gamma_S: S\to FK(S)$ establish an equivalence between the categories $\mathcal{I}$ and $\mathcal{F}$. The proof is essentially the same as~\cite[Lemma VII.6.7]{Petrich1984} and \cite[Lemma VII.6.8]{Petrich1984}, with some minor adaptations. However, to keep our exposition self-contained, we provide the details of the proof. 

\begin{thm}\label{pro-equivalence}
The mapping $(\xi_\Lambda,\eta_L)$ establishes an equivalence between the functors $id_{\mathcal{I}}$ and $KF$, while the mapping $\gamma_S$ establishes an equivalence between the functors $id_{\mathcal{F}}$ and $FK$.  Consequently, the categories $\mathcal{I}$ and $\mathcal{F}$ are equivalents.
\end{thm}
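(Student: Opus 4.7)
The plan is to verify that $(\xi_\Lambda,\eta_L)$ and $\gamma_S$ are componentwise natural isomorphisms; the argument parallels the group case treated in \cite[Lemmas VII.6.7 and 6.8]{Petrich1984}, the new ingredient being the defining identity \eqref{f-inverse} of Lie $F$-inverse semialgebras, which is precisely what makes the Lie bracket transport correctly.

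First I would treat $\gamma_S:S\to FK(S)$, $s\mapsto(0_s,\overline{s})$. Well-definedness amounts to checking $0_s\leq 0_{m_{\overline{s}}}$ in the semilattice $\E(S)$, which is immediate from $s\preceq m_{\overline{s}}$ via \eqref{eq_order}. The identity $s=m_{\overline{s}}+0_s$ (again from \eqref{eq_order}) yields injectivity, since $(0_s,\overline{s})$ determines $s$. For surjectivity, given $(\lambda,\overline{s})\in FK(S)$ with $\lambda+0_{m_{\overline{s}}}=\lambda$, the element $u=m_{\overline{s}}+\lambda$ satisfies $0_u=0_{m_{\overline{s}}}+\lambda=\lambda$ and $u\,\sigma\, m_{\overline{s}}$ (witnessed by the idempotent $\lambda$), so $\gamma_S(u)=(\lambda,\overline{s})$. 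Additivity, $\F$-linearity and preservation of $0$ are immediate from $0_{s+t}=0_s+0_t$ and $0_{\alpha s}=0_s$. The only delicate point is the bracket: unfolding the definitions,
\[
[\gamma_S(s),\gamma_S(t)]=\bigl(\overline{[s,t]}\cdot(0_s+0_t),\,\overline{[s,t]}\bigr)=\bigl(0_{m_{\overline{[s,t]}}}+0_{s+t},\,\overline{[s,t]}\bigr),
\]
while \eqref{f-inverse} gives $0_{[s,t]}=0_{m_{\overline{[s,t]}}+0_{s+t}}=0_{m_{\overline{[s,t]}}}+0_{s+t}$, which matches the first coordinate of $\gamma_S([s,t])$. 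Naturality with respect to $\varphi\in\mathcal{F}$ is then a direct diagram chase, using $\varphi(0_s)=0_{\varphi(s)}$ and $\varphi^{\sigma}(\overline{s})=\overline{\varphi(s)}$.

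Next, for $(\xi_\Lambda,\eta_L):(\Lambda,L)\to KF(\Lambda,L)$: the map $\xi_\Lambda(\lambda)=(\lambda,0)$ is clearly a bijection onto $\E(F(\Lambda,L))=\{(\lambda,0)\mid\lambda\in\Lambda\}$ and sends $\wedge$ to $+$, hence is a monoid isomorphism. The map $\eta_L(a)=\overline{(\cdot,a)}$ is a Lie algebra isomorphism, because $(\lambda,a)\,\sigma\,(\mu,b)$ in $F(\Lambda,L)$ iff $a=b$, as witnessed by the idempotent $(\lambda\wedge\mu,0)$. For action compatibility, recall that the greatest element of $\overline{(\cdot,a)}$ is $(a\cdot\varepsilon,a)$, so
\[
\eta_L(a)\cdot\xi_\Lambda(\lambda)=0_{(a\cdot\varepsilon,a)}+(\lambda,0)=(a\cdot\varepsilon\wedge\lambda,0)=(a\cdot\lambda,0)=\xi_\Lambda(a\cdot\lambda),
\]
where the identity $a\cdot\varepsilon\wedge\lambda=a\cdot\lambda$ comes from Lemma~\ref{lemma-fp}(2). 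Naturality with respect to a morphism $(\theta,\phi)$ in $\mathcal{I}$ is again direct: $F(\theta,\phi)(\lambda,0)=(\theta(\lambda),0)$ and $F(\theta,\phi)^{\sigma}(\overline{(\cdot,a)})=\overline{(\cdot,\phi(a))}$.

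The main (indeed, essentially the only non-routine) obstacle is the bracket calculation for $\gamma_S$; everything else is careful bookkeeping with the natural partial order and the definitions of $F$, $K$, and $\sigma$. The axiom \eqref{f-inverse} in the definition of Lie $F$-inverse semialgebra was tailored exactly to make this bracket step go through, so once it is invoked the entire verification becomes mechanical.
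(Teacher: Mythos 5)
Your proposal is correct and follows essentially the same route as the paper's proof: both adapt Petrich's Lemmas VII.6.7–6.8, verify bijectivity of $\gamma_S$ via $s=m_{\overline{s}}+0_s$ and the surjectivity witness $m_{\overline{s}}+\lambda$, reduce the bracket compatibility of $\gamma_S$ to the axiom \eqref{f-inverse}, check the action compatibility of $(\xi_\Lambda,\eta_L)$ via Lemma~\ref{lemma-fp}, and finish with the same naturality diagram chases. No gaps.
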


\begin{proof}
First note that for any $(\Lambda,L)\in \mathcal{I}$ we have that $\xi_\Lambda$ is an isomorphism between $\Lambda$ and $\E(F(\Lambda,L))$, and $\eta_{L}$ is an isomorphism between $L$ and  $F(\Lambda,L)/ \sigma$.  Furthermore, for any $a\in L$ and $\lambda\in \Lambda$ we have $$\xi_\Lambda(a\cdot \lambda)=(a\cdot \lambda,0)=(\varepsilon\wedge a\cdot \lambda,0)=(a\cdot \varepsilon\wedge \lambda,0)=(a\cdot \varepsilon,0)+(\lambda,0)=0_{m_{\overline{(\cdot,a)}}}+(\lambda,0)=
\eta_L(a)\cdot  \xi_\Lambda(\lambda ),$$
which shows that $(\xi_\Lambda,\eta_L)$ is an isomorphism in the category $\mathcal{I}$. In order to  show that the correspondence $(\Lambda,L)\mapsto(\xi_\Lambda,\eta_L)$ is natural let  $(\theta,\phi):(\Lambda,L)\to (\Pi,H)$ be a morphism of partial representations, and for convenience, set $(\theta',\phi')=KF(\theta,\phi).$ Then, for every $\lambda\in \Lambda$ and $a\in L$, we have 
$$(\theta',\phi')\circ(\xi_\Lambda,\eta_L)(\lambda,a)=(\theta',\phi')((\lambda,0),\overline{(\cdot,a)})=((\theta(\lambda),0),\overline{(\cdot, \phi(a))}=(\xi_\Pi,\eta_H)\circ(\theta,\phi)(\lambda,a),$$
showing that the correspondence $(\Lambda,L)\mapsto(\xi_\Lambda,\eta_L)$ is a natural equivalence. 

We now address the second assertion. First note that for any  $S\in \mathcal{F}$ the mapping $\gamma_S$ is a bijection. Indeed, if  $s,t\in S$ are such that $0_s=0_t$ and $\overline{s}=\overline{t}$, then $s=m_{\overline{s}}+0_s=m_{\overline{t}}+0_t=t$, showing that $\gamma_S$ in injective. Now, consider any $(\mu,\overline{s})\in FK(S)$. Since $\mu\leq 0_{m_{\overline{s}}}$, setting $t=\mu+m_{\overline{s}}$ we obtain 
$$\gamma_S(t)=(\mu+0_{m_{\overline{s}}}, \overline{(\mu+m_{\overline{s}}}))=(\mu,\overline{m_{\overline{s}}})=(\mu,\overline{s}),$$
which shows that $\gamma_S$ is onto. Furthermore, it is clear that  for any $\alpha\in \F$ and $s,t\in S$ we have $\gamma_S(s+t)=\gamma_S(s)+\gamma_S(t)$  and $\gamma_S(\alpha s)=\alpha\gamma_S(s)$.  In addition,   using \eqref{f-inverse}, we see that 
$$[\gamma_S(s),\gamma_S(t)]=[(0_s,\overline{s}),(0_t,\overline{t})]=(\overline{[s,t]}\cdot 0_{s+t},\overline{[s,t]})=(0_{m_{\overline{[s,t]}}}+0_{s+t},\overline{[s,t]})=(0_{[s,t]},\overline{[s,t]})=\gamma_S([s,t]),$$
which implies that $\gamma_S$ is an isomorphism in the category $\mathcal{F}.$ It remains to show that the correspondence $S\mapsto \gamma_S$ is natural. For this suppose that $\varphi: S\to T$ is a homomorphism in $\mathcal{F}$. Then, for any $s\in S$ we have that 
$$FK(\varphi)\circ\gamma_S(s)=FK(\varphi)(0_s,\overline{s})=(\varphi(0_s),\overline{\varphi(s)})=(0_{\varphi(s)},\overline{\varphi(s)})=\gamma_T\circ\varphi(s),$$
which gives the naturality of the correspondence $S\mapsto \gamma_S.$
\end{proof}

\subsection{Lie analogue of Szendrei theorem }

Suppose that $\phi: L\to H$ is a  Lie algebra homomorphism. Then, $\phi$ induces a homomorphism $\widetilde \phi: E(L)\to E(H)$ given by $\widetilde\phi(A,a)=(\phi(A), \phi(a))$. This homomorphism maps the greatest elements of the $\sigma$-class of $(A,a)$ to the greatest element of the $\sigma$-class of $\widetilde\phi(A,a)$. Additionally, it is important to note that $\widetilde{\phi\circ\psi}=\widetilde\phi \circ\widetilde\psi$.

Let $\mathcal{L}$ denote the category of Lie algebras, and let $\mathcal{F}$ denote the category of Lie $F$-inverse semialgebras and their homomorphisms which map the greatest element of a $\sigma$-class to the greatest element of a $\sigma$-class. By the previous paragraphs, the correspondence $L\mapsto E(L),$  $\phi\mapsto \widetilde \phi$ defines a functor $E:\mathcal{L}\to \mathcal{F}$. Similarly, the correspondence $S\mapsto S/\sigma ,$   $\phi\mapsto \phi^\sigma$ defines a functor $Q:\mathcal{F}\to \mathcal{L}.$ The main result of this section is the following theorem, which is  a Lie analogue  of a well-known result by M. Szendrei \cite[Theorem 2]{Szendrei}. 
\begin{thm}\label{theorem-action}
    The functor $E:\mathcal{L}\to \mathcal{F}$ is left adjoint to the functor $Q:\mathcal{F}\to \mathcal{L}.$
\end{thm}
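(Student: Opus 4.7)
My plan is to verify the universal property of the adjunction directly, taking as candidate unit the natural isomorphism $\eta_L \colon L \to Q(E(L)) = E(L)/\sigma$ given by $\eta_L(a) = \overline{(\F a, a)}$. The task is to show that for every $F$-inverse Lie semialgebra $S$ and every Lie algebra homomorphism $\phi \colon L \to S/\sigma$, there is a unique morphism $\tilde\phi \colon E(L) \to S$ in $\mathcal{F}$ with $Q(\tilde\phi) \circ \eta_L = \phi$; naturality in $L$ and $S$ will then follow from a routine diagram chase.

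Given $\phi$, I first define $\rho \colon L \to S$ by $\rho(a) = m_{\phi(a)}$, the greatest element of the $\sigma$-class $\phi(a)$. The identity~\eqref{f-inverse} yields $[m_{\phi(a)}, m_{\phi(b)}] \preceq m_{\phi([a,b])}$; the inequality $\rho(a+b) \succeq \rho(a) + \rho(b)$ holds because $m_{\phi(a)} + m_{\phi(b)}$ belongs to the $\sigma$-class $\phi(a+b)$, of which $m_{\phi(a+b)}$ is the maximum; and scalar compatibility follows from $m_{\alpha\phi(a)} = \alpha m_{\phi(a)}$ for $\alpha \neq 0$ (using Lemma~\ref{p1f}(3) to see that $\preceq$ is preserved by nonzero scalars). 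Hence $\rho$ is a premorphism, $\inf\E(\rho(A))$ is well-defined for every finite-dimensional subspace $A \subseteq L$ by Lemma~\ref{welldefined}, and I set
$$\tilde\phi(A,a) := \inf\E(\rho(A)) + m_{\phi(a)}.$$
Linearity of $\tilde\phi$ is proved exactly as in Theorem~\ref{prouniq}: it uses Corollary~\ref{cor:infErho}, the premorphism identity~\eqref{eq-premorphism} for $\rho$, and the absorption $\inf\E(\rho(B)) + 0_{m_{\phi(b)}} = \inf\E(\rho(B))$ for $b \in B$, and crucially does not invoke the hypothesis~\eqref{eq:Sminus}.

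The critical step is bracket preservation. Applying~\eqref{f-inverse} to $[\tilde\phi(A,a), \tilde\phi(B,b)]$ and noting that additive idempotents are $\sigma$-equivalent to $0$, the $\sigma$-class of this bracket is $\phi([a,b])$, with greatest-element representative $m_{\phi([a,b])}$, and the correction $0_{\tilde\phi(A,a)+\tilde\phi(B,b)}$ collapses to $\inf\E(\rho(A)) + \inf\E(\rho(B))$ via $\inf\E(\rho(A)) \preceq 0_{m_{\phi(a)}}$ and the analogue for $B$. On the other side, $\tilde\phi([(A,a),(B,b)]) = \inf\E(\rho(A+B+\F[a,b])) + m_{\phi([a,b])}$, which by Corollary~\ref{cor:infErho} and $0_{m_{\phi([a,b])}} + m_{\phi([a,b])} = m_{\phi([a,b])}$ reduces to the same expression. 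The bookkeeping of these idempotent absorptions is the main obstacle I anticipate. Once bracket preservation is established, $\tilde\phi(\F a, a) = 0_{m_{\phi(a)}} + m_{\phi(a)} = m_{\phi(a)}$ shows that $\tilde\phi$ sends greatest elements of $\sigma$-classes to greatest elements and that $Q(\tilde\phi) \circ \eta_L = \phi$. For uniqueness, any $\psi \in \Hom_{\mathcal{F}}(E(L), S)$ with $Q(\psi) \circ \eta_L = \phi$ must send $(\F a, a)$ to $m_{\phi(a)}$, and writing $(A,0) = \sum_i 0_{(\F x_i, x_i)}$ for any basis $\{x_i\}$ of $A$ forces $\psi(A,0) = \sum_i 0_{m_{\phi(x_i)}} = \inf\E(\rho(A))$ by Lemma~\ref{welldefined}; hence $\psi = \tilde\phi$.
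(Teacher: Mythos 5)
Your proposal is correct, but it takes a genuinely different route from the paper. The paper does not verify the universal property of a unit directly: it factors $E=F\overline{E}$ and $Q=\overline{Q}K$ through the category $\mathcal{I}$ of partial representations, invokes the equivalence $\mathcal{I}\simeq\mathcal{F}$ of Theorem~\ref{pro-equivalence}, and then proves the adjunction $\overline{E}\dashv\overline{Q}$ at the level of semilattices by exhibiting the hom-set bijection $\phi\mapsto(\Theta_\phi,\phi)$ with $\Theta_\phi(A)=\inf\phi(A)\cdot\varepsilon$ and checking naturality in both variables --- essentially Szendrei's original argument transported to the Lie setting. You instead stay inside $\mathcal{F}$, build the premorphism $\rho(a)=m_{\phi(a)}$, and reuse the extension machinery of Theorem~\ref{prouniq}, Lemma~\ref{welldefined} and Corollary~\ref{cor:infErho}. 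Your key observation --- that \eqref{f-inverse}, applied to the arbitrary elements $s=\tilde\phi(A,a)$ and $t=\tilde\phi(B,b)$ rather than only to images of $\rho$, gives $[s,t]=m_{\phi([a,b])}+0_{s+t}$ with $0_{s+t}=\inf\E(\rho(A))+\inf\E(\rho(B))$ --- is exactly what makes bracket preservation work without the hypothesis \eqref{eq:Sminus}; it is sound, since $\overline{s}=\phi(a)$, $\overline{t}=\phi(b)$, and the absorptions $\inf\E(\rho(A))+0_{m_{\phi(a)}}=\inf\E(\rho(A))$ follow from Lemma~\ref{welldefined}. What your approach buys is a self-contained proof that bypasses Theorem~\ref{pro-equivalence} and exhibits $E(L)$ explicitly as the free $F$-inverse Lie semialgebra on $L$; what the paper's approach buys is that the adjunction is obtained simultaneously at the level of partial representations, the form closest to Szendrei's theorem and Petrich's $F$-pairs. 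The only loose end in your write-up is the deferred check that the given action of $E$ on morphisms agrees with the one induced by the universal arrows, i.e.\ naturality of $\eta$; this is the one-line computation $QE(\gamma)(\overline{(\F a,a)})=\overline{(\F\gamma(a),\gamma(a))}=\eta_{L'}(\gamma(a))$, so the gap is harmless.
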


The remainder of this section is devoted to  prove Theorem~\ref{theorem-action}. The proof follows the approach from~\cite{Szendrei}, with additional ingredients needed for our Lie context. More specifically, instead of proving the theorem directly,  we factorize the functors $E:\mathcal{L}\to \mathcal{F}$ and $Q:\mathcal{Q}\to \mathcal{L}$ in Theorem~\ref{theorem-action}  through the functors $F$ and $K$ described in Subsection~\ref{s: F-inverse Lie semialgebras}. To do this, we first recall from Example~\ref{ex-section4}, that for a given Lie algebra $L$   we have  the meet semilattice $P_f(L)$ of all finite-dimensional subspaces of $L$, and the mapping $L\times P_f(L) \to  P_f(L)$ defined by $a\cdot A=A+\F a$, gives a partial representation $(P_f(L),L)$. Furthermore, if $\phi: L\to H$ is a Lie algebra homomorphism, then the mapping $\widehat\phi:P_f(L)\to P_f(H)$, defined by  $\widehat{\phi}(A)=\phi(A)$, is a homomorphism of monoids; and, clearly, the mapping $(\widehat\phi,\phi):(P_f(L),L)\to  (P_f(H), H)$ is a morphism of partial representations. Moreover, we have that $\widehat{\phi\circ\psi}=\widehat\phi\circ \widehat\psi$.  Therefore, we can define a functor $\overline{E}: \mathcal{L}\to \mathcal{I}$ by setting $\overline{E}(L)=(P_f(L),L)$ and $\overline{E}(\phi)= (\widehat{\phi}, \phi)$. Similarly, we can define a functor $\overline{Q}:\mathcal{I}\to \mathcal{L}$ by setting  $\overline{Q}(\Lambda,L)=L$ and $\overline{Q}(\theta, \phi)=\phi$. The functors $\overline E$ and $\overline{Q}$ satisfy  $E=F\overline{E}$ and $Q=\overline{Q}K$, where $E$ and $Q$ are the functors in Theorem~\ref{theorem-action}. By virtue of Theorem~\ref{pro-equivalence}, to prove Theorem~\ref{theorem-action} it suffices to show the following theorem.

\begin{thm}
The functor $\overline{E}$ is left adjoint to the functor $\overline{Q}.$
\end{thm}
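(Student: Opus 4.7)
The plan is to establish a natural bijection
\[
\Hom_{\mathcal{I}}(\overline{E}(L),(\Pi,H)) \;\cong\; \Hom_{\mathcal{L}}(L,H)
\]
for every Lie algebra $L$ and every partial representation $(\Pi,H)\in \mathcal{I}$. One direction is the obvious projection $(\theta,\phi)\mapsto \phi$, so the real content is to produce an inverse: given a Lie algebra homomorphism $\phi:L\to H$, canonically build a monoid homomorphism $\theta_\phi:P_f(L)\to \Pi$ which is $\phi$-equivariant in the sense of morphisms of partial representations.

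For $\phi:L\to H$ and $A\in P_f(L)$ I would define
\[
\theta_\phi(A)\;:=\; \inf\{\phi(a)\cdot\varepsilon\mid a\in A\} \;=\; \phi(x_1)\cdot\varepsilon \wedge \cdots \wedge \phi(x_k)\cdot \varepsilon,
\]
where $\{x_1,\dots,x_k\}$ is any finite generating set of $A$ and $\varepsilon$ is the unit of $\Pi$. The second equality — and in particular independence of the generating set — is exactly Lemma~\ref{lemma:F-pair} applied to the finite-dimensional subspace $\phi(A)\subseteq H$ inside the partial representation $(\Pi,H)$. That $\theta_\phi$ is a monoid homomorphism follows at once from $\theta_\phi(0)=0\cdot\varepsilon=\varepsilon$ together with Proposition~\ref{prop-inf}(1):
\[
\theta_\phi(A+B)=\inf \phi(A+B)\cdot\varepsilon = \inf \phi(A)\cdot \varepsilon \;\wedge\; \inf\phi(B)\cdot \varepsilon = \theta_\phi(A)\wedge \theta_\phi(B).
\]
The equivariance $\theta_\phi(a\cdot A)=\phi(a)\cdot \theta_\phi(A)$ is then Proposition~\ref{prop-inf}(2):
\[
\theta_\phi(A+\F a) = \inf(\phi(A)+\F \phi(a))\cdot\varepsilon = \phi(a)\cdot \bigl(\inf \phi(A)\cdot \varepsilon\bigr) = \phi(a)\cdot \theta_\phi(A).
\]
Hence $(\theta_\phi,\phi)$ is a bona fide morphism $\overline{E}(L)\to (\Pi,H)$ in $\mathcal{I}$.

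Next I would show that the assignments $\phi\mapsto (\theta_\phi,\phi)$ and $(\theta,\phi)\mapsto \phi$ are mutually inverse. One composition is tautological. For the other, given any morphism $(\theta,\phi):\overline{E}(L)\to (\Pi,H)$, the equivariance evaluated at the unit $0\in P_f(L)$ forces $\theta(\F a)=\theta(a\cdot 0)=\phi(a)\cdot \theta(0)=\phi(a)\cdot\varepsilon$; and since $A=\F x_1+\cdots+\F x_k$ in the semilattice $P_f(L)$ and $\theta$ preserves meets, I conclude $\theta(A)=\theta(\F x_1)\wedge\cdots\wedge\theta(\F x_k)=\theta_\phi(A)$. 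Thus $\theta=\theta_\phi$ and the bijection is established.

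Finally I would verify naturality in both variables: given $\psi:L'\to L$ in $\mathcal{L}$ and $(\theta',\phi'):(\Pi,H)\to (\Pi',H')$ in $\mathcal{I}$, a direct diagram chase using the explicit formula for $\theta_\phi$, together with the fact that $\theta'$ commutes with the action of $H$ on $\Pi$ up to $\phi'$, shows that both sides of the bijection transform in the same way under pre- and post-composition with $\overline{E}(\psi)$ and $(\theta',\phi')$. I do not expect a serious obstacle: the conceptual content — and essentially the only one — is that $\overline{E}(L)$ is \emph{free} on $L$ in the precise sense that $A=x_1\cdot(x_2\cdots(x_k\cdot 0)\cdots)$ for every generating set, so any $\theta$ satisfying the required equivariance is forced by $\phi$ on one-dimensional subspaces; independence of the generating set is exactly Lemma~\ref{lemma:F-pair}, which is what makes $\theta_\phi$ well defined in the first place.
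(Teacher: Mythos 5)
Your proposal is correct and follows essentially the same route as the paper: the same formula $\theta_\phi(A)=\inf\phi(A)\cdot\varepsilon$ for the adjoint transpose, well-definedness via Lemma~\ref{lemma:F-pair}, the monoid-homomorphism and equivariance properties via Proposition~\ref{prop-inf}, and the same uniqueness argument forcing $\theta(\F a)=\phi(a)\cdot\varepsilon$ and hence $\theta=\theta_\phi$ by meet-preservation. The only part you leave as a sketch is the naturality in both variables, which the paper carries out explicitly, but your description of the required diagram chase matches the paper's computation.
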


\begin{proof}
Let $L$ be a Lie algebra and $(\Pi,H)$ a partial representation. Then, for each homomorphism  
$\phi: L\to  H=\overline{Q}(\Pi , H)$, 
let $\beta(\phi):\overline{E}(L)\to (\Pi,H)$ be defined by $\phi\mapsto (\Theta_\phi,\phi)$, where $\Theta_\phi: P_f(L)\to \Pi$ is defined by $\Theta_\phi(A)=\inf \phi(A)\cdot \varepsilon$. It follows from Proposition~\ref{prop-inf}, that the pair  $(\Theta_\phi,\phi)$ is a homomorphism of partial representations. The mapping $\beta:\Hom(L,\overline{Q}(\Pi,H))\to \Hom(\overline{E}(L), (\Pi,H))$, defined by $\phi\mapsto \beta(\phi)$, is clearly injective. To see that $\beta$  is onto, note that for $(\Theta,\phi)\in \Hom(\overline{E}(L), (\Pi,H))$ and for any $a\in L$ we have that 
$$\Theta(\F a)=\Theta(a\cdot\varepsilon)=\phi(a)\cdot \varepsilon=\Theta_\phi(\F a),$$
which implies that $\Theta=\Theta_\phi$. Therefore the mapping $\beta $ is bijective. To complete the proof, we need to establish the naturality of  $\beta =\beta _{L,(\Pi,H)}$ in both components.

Naturality of $\beta$ in the first component: Let $\gamma:L\to K$ be a Lie algebra homomorphism. For $\phi\in \Hom(K,H)$, we have that 
$$\beta(\Hom(\gamma,H)(\phi))=\beta(\phi\gamma)=(\Theta_{\phi\gamma},\phi\gamma),$$
and on the other hand,
$$\Hom((\widehat\gamma,\gamma),(\Pi,H))(\beta(\phi))=\Hom((\widehat\gamma,\gamma),(\Pi,H))(\Theta_\phi,\phi)=(\Theta_\phi\widehat\gamma,\phi\gamma).$$
Now, for any $A\in P_f(L)$ we see that 
$$\Theta_\phi\hat\gamma(A)=\Theta_\phi\gamma(A)=\inf \phi\gamma(A)\cdot\varepsilon=\Theta_{\phi\gamma}(A),$$
which implies $\Theta_\phi\hat\gamma=\Theta_{\phi\gamma}.$ Therefore, $\beta$ is natural in the first component. 

Naturality of $\beta$ in the second component: Let $(\Theta,\phi):(\Pi, H) \to (\Lambda,M)$ be a homomorphisms of partial representations. Then, for $\psi\in \Hom(L,H)$, we have that 
$$\Hom(\overline{E}(L),(\Theta,\phi))(\beta(\psi))=\Hom(\overline{E}(L),(\Theta,\phi))(\Theta_\psi,\psi)=(\Theta\Theta_\psi,\phi\psi)$$
and 
$$\beta(\Hom(L,\phi)(\psi))=\beta(\phi\gamma)=(\Theta_{\phi\gamma},\phi\gamma).$$
We need to verify the equality $\Theta\Theta_\psi=\Theta_{\phi\gamma}$. For $A\in P_f(A),$ suppose that $\{x_1,\ldots,x_k\}$ is a finite set of generators for $A$. Since $(\Theta,\phi)$ is a homomorphism of partial representations, we have 
\begin{multline*}
    \Theta\Theta_\psi(A)=\Theta(\inf\psi(A)\cdot \varepsilon)=\Theta(\psi(x_1)\cdot\varepsilon\wedge\cdots\wedge \psi(x_k)\cdot\varepsilon)=\\=\phi(\psi(x_1))\cdot\varepsilon\wedge\cdots\wedge\phi(\psi(x_k))\cdot\varepsilon=\inf \phi\psi(A)\cdot\varepsilon=\Theta_{\phi\psi}(A).
\end{multline*}
Hence $ \Theta\Theta_\psi=\Theta_{\phi\psi}$, which establishes the naturality of $\beta$ in the second component.
\end{proof}





\section*{Acknowledgments}

The first named author was partially supported by 
Funda\c c\~ao de Amparo \`a Pesquisa do Estado de S\~ao Paulo (Fapesp), process n°:  2020/16594-0, and by  Conselho Nacional de Desenvolvimento Cient\'{\i}fico e Tecnol{\'o}gico (CNPq), process n°: 312683/2021-9. The second named author was supported by Fapesp, process n°: 2022/00953-7. The third named author was partially supported by Fapesp, process n°: 2019/08659-8, and PRPI da Universidade de São Paulo, process n°: 22.1.09345.01.2.

\end{document}